\documentclass{article}
\usepackage{graphicx} % Required for inserting images
\usepackage{amsmath, amsfonts, amsthm, amssymb,mathrsfs,mathtools}
\usepackage{cases, color, bm, bbold}
\usepackage{xcolor, cite}

\usepackage{enumerate}
\usepackage[text={6.2in, 7.9in},centering]{geometry}
\definecolor{ForestGreen}{RGB}{20,140,80}
\RequirePackage[colorlinks,citecolor=ForestGreen,urlcolor=blue,linkcolor=blue]{hyperref}

\usepackage[english]{babel}

\newcommand{\R}{\mathbb{R}}

\newcommand\bL{\mathbb{L}}
\newcommand\bR{\mathbb{R}}
\newcommand\bH{\mathbb{H}}

\newcommand\bZ{\mathbb{Z}}

\newcommand\bD{\mathbb{D}}
\newcommand\bS{\mathbb{S}}

\newcommand\bE{\mathbb{E}}

\newcommand\bP{\mathbb{P}}

\newcommand\ba{\mathbb{a}}
\newcommand\bb{\mathbb{b}}
\newcommand\bc{\mathbb{c}}

\newcommand\cC{\mathcal{C}}

\newcommand\cF{\mathcal{F}}
\newcommand\cG{\mathcal{G}}
\newcommand\cH{\mathcal{H}}

\newcommand\cL{\mathcal{L}}

\let \R \bR

\newcommand\fA{\mathfrak{A}}
\newcommand\fB{\mathfrak{B}}
\newcommand\fC{\mathfrak{C}}
\newcommand\fD{\mathfrak{D}}

\renewcommand{\P}{\mathbb{P}}
\newcommand{\E}{\mathbb{E}}

\newcommand{\Ctem}{C_{\mathrm{tem}}}
\newcommand{\1}{\textbf{1}}
\newcommand{\ud}{\mathrm{d}}

\newcommand\cbrk{\text{$]$\kern-.15em$]$}}
\newcommand\opar{\text{\,\raise.2ex\hbox{${\scriptstyle
|}$}\kern-.34em$($}}
\newcommand\cpar{\text{$)$\kern-.34em\raise.2ex\hbox{${\scriptstyle |}$}}\,}
\newcommand\supp{\mathrm{supp}}

\newcommand\ep{\varepsilon}

\renewcommand{\epsilon}{\ep}

\newtheorem{stat}{Statement}[section]
\newtheorem{proposition}[stat]{Proposition}

\newtheorem{corollary}[stat]{Corollary}
\newtheorem{theorem}[stat]{Theorem}
\newtheorem{lemma}[stat]{Lemma}

\newtheorem{innercustomthm}{Theorem}
\newenvironment{customthm}[1]
  {\renewcommand\theinnercustomthm{#1}\innercustomthm}
  {\endinnercustomthm}

\theoremstyle{definition} 
\newtheorem{definition}[stat]{Definition}
\newtheorem{assumption}[stat]{Assumption}

\newtheorem{remark}[stat]{Remark}

\newtheorem{example}[stat]{Example}

\numberwithin{equation}{section}

\title{Instantaneous shrinking of supports for stochastic PDEs\footnote{JY was supported by a KIAS Individual Grant (HP090401) and by the National Science Foundation under Grant No. DMS-2424139 while in residence at the Simons Laufer Mathematical Sciences Institute in Berkeley, California, during the Fall 2025 semester. KK was supported in part by the National Research Foundation of Korea (RS-2026-25479024).}}

\author{Beom-Seok Han \footnote{Sungshin Women's University, Seoul, South Korea E-mail: b\_han@sungshin.ac.kr}
\and Kunwoo Kim  \footnote{Pohang University of Science and Technology (POSTECH), Pohang, South Korea E-mail: kunwoo@postech.ac.kr}
\and Jaeyun Yi  \footnote{Korea Institute for Advanced Study (KIAS), Seoul, South Korea E-mail: jaeyun@kias.re.kr}
}

\date{\today}

\begin{document}

\maketitle
\begin{abstract}
We study instantaneous shrinking of supports for nonnegative solutions of the stochastic partial differential equation
\[
\partial_t u=a(t,x)\,\partial_x^2 u + b(t,x)\,\partial_x u + c(t,x)\,u +\sigma(u)\,\xi(t,x), \qquad (t,x)\in(0,\infty)\times\mathbb R,
\]
where $\xi$ is space-time white noise, the coefficients $a$, $b$, $c$ may be random, and the noise coefficient $\sigma$ vanishes at the origin and is sublinear there. The model case is $\sigma(u)=u^\gamma$ with $\gamma\in(0,1)$. We show that, under a uniqueness-in-law assumption, if the initial datum has a sufficiently light spatial tail, then every nonnegative solution has compact support at every positive time, even though the initial support is not compact. The initial datum may also be a measure, such as
a Dirac mass. When $\gamma\in(0,1/2]$, finite initial mass suffices; this covers the super-Brownian case $\gamma=1/2$. When $\gamma\in(1/2,1)$, we identify a polynomial moment condition on the initial state whose order diverges as $\gamma\uparrow1$, quantifying the trade-off between the strength of the noise near zero and the decay of the initial data required for instantaneous shrinking. As a step of independent interest, we establish weak existence of solutions started from measure-valued initial data for non-Lipschitz $\sigma$ and random operators. Our results provide a stochastic counterpart of the instantaneous shrinking phenomenon of Evans
and Knerr for deterministic parabolic equations with strong absorption, in which the role of the absorption term is played entirely by the noise.
\vspace{1cm}

\noindent{\it Keywords:}  Stochastic partial differential  equation, compact support property, instantaneous shrinking of supports, space-time white noise, measure-valued initial data\\
    
\noindent{\it \noindent MSC 2020 subject classification:} 60H15, 35R60
\end{abstract}

{\hypersetup{linkcolor=black}
\tableofcontents}

%%%%%%%%%%%%%%%%%%%%%%%%%%%%%%%%%%%%%%%%%%%%%%%%%%%%%%%%%%%%%%%%%%%%%%%%%%%%%%%%%%%%%%%%%%%%%%%%

\section{Introduction}
 We study whether compact support can be created instantaneously by sublinear multiplicative noise. More precisely, we consider nonnegative solutions to the one-dimensional stochastic partial differential equation
\begin{equation}\label{eq:SPDE}
\begin{aligned}
\partial_t u(t, x)  = L u(t, x) + \sigma(u(t,x)) \xi(t, x), \quad (t,x)\in \R_+\times\R;\quad 
u(0, \cdot) = u_0,
\end{aligned}
\end{equation}
where
\begin{equation}
\label{second order operator}
L u(t,x) = a(t,x)\partial_{x}^{2}u(t,x) + b(t,x) \partial_{x}u(t,x) + c(t,x)u(t,x)
\end{equation} 
is a possibly random parabolic second-order differential operator, $\xi$ is space-time white noise on $(0,\infty)\times\bR$, and the noise coefficient $\sigma:\bR\to[0,\infty)$ vanishes on $(-\infty,0]$ and is allowed to be non-Lipschitz (sublinear) at the origin. The model example to keep in mind is
\[
    \sigma(u)=u^\gamma{\bf 1}_{\{u\ge0\}},
    \qquad
    \gamma\in(0,1).
\]
Our main question is the following:

\begin{quote}
\emph{if the initial state is strictly positive everywhere on $\bR$,
can the solution nevertheless have compact support at every positive
time?}
\end{quote}   

This question should be contrasted with the classical \emph{compact support property}. For stochastic heat equations, the compact support property (CSP) is usually formulated as a \emph{preservation} result: if the initial datum has compact support, then the solution keeps compact support for all times. Such results have a long history. For 
\begin{equation}\label{eq:model_SPDE}
\partial_t u(t,x)=\partial_x^2 u(t,x) + u(t,x)^\gamma \xi(t,x), \qquad  (t,x)\in(0,\infty)\times\bR,
\end{equation}
the case $\gamma=1/2$ is connected with the density of one-dimensional super-Brownian motion, and CSP follows from the duality argument (see Iscoe \cite{iscoe1988supports} and also Perkins \cite{Per02}). Shiga \cite{shiga1994two} treated the case $\gamma \in (0, 1/2)$ by following Iscoe's idea. The range $\gamma\in(1/2,1)$ is more delicate, since the coefficient $u^\gamma$ is smaller than $u^{1/2}$ near zero. Mueller and Perkins \cite{mueller1992compact} proved CSP in this regime by using a historical process representation. On the other hand, Mueller \cite{mueller1991support} showed that CSP may fail when the noise is too weak near zero, i.e., $\gamma\ge1$.

Krylov \cite{krylov1997result} later gave an analytic proof of CSP for one-dimensional SPDEs with space-time white noise and a general random parabolic second-order differential operator. His method is based on weak solutions and $L_p$-theory for SPDEs. More recently, Han--Kim--Yi \cite{han2023compact} proved CSP for SPDEs with spatially colored noise, extending the one-dimensional white-noise theory to a broader class of noises, while Hughes \cite{Hug25} established CSP for the stochastic heat equation driven by white stable noise. Related phase-transition phenomena depending
on the strength of the noise near zero have also been studied in \cite{han2024support}.

All the results mentioned above are support-preservation results. This paper establishes a different phenomenon, in which compact support is \emph{created instantaneously from noncompact initial data}. This is far from obvious. The noise in \eqref{eq:SPDE} enters as a martingale term and, up to lower-order terms, the total mass of a nonnegative solution is a supermartingale. In other words, there is no absorption in the equation. Whatever kills the tail must therefore be the fluctuations themselves. In regions where the solution is small, the sublinear coefficient makes the noise dominant, since $\sigma(u)\gg u$ as $u\downarrow0$. The fluctuations then drive the local mass to zero, where it is trapped because $\sigma(0)=0$. The question is whether this mechanism can eliminate an unbounded initial tail before any prescribed positive time.

For deterministic parabolic equations, the corresponding phenomenon is produced by an explicit absorption term. That is, for
$\partial_t u=\Delta u-u^\gamma$ with $\gamma\in(0,1)$, it is known as \emph{instantaneous shrinking of supports} and has been studied extensively (see Evans and Knerr
\cite{evans1979instantaneous} and also \cite{GK90,galaktionov1994extinction,ILS17}). For SPDEs, where the role of the absorption is played entirely by a mean-zero noise, we are aware of only one case in which instantaneous shrinking has
been established: the super-Brownian one ($\gamma=1/2$ in
\eqref{eq:model_SPDE}). There, finite initial mass---for instance,
$u_0\in L_1(\bR)$, or even a finite initial measure---forces compact
support at all positive times (see \cite[Corollary III.1.4]{Per02}). The proof, however, relies on the duality and branching
structure special to $\gamma=1/2$, and gives no information about other coefficients $\sigma$, let alone random operators $L$. We establish instantaneous shrinking for a broad class of SPDEs of the form \eqref{eq:SPDE}. To illustrate our results in the simplest unconditional form, we
state the following special case (see
Corollary \ref{cor:model_case} for the precise statement).

\begin{customthm}{A}[Informal]\label{thm:informal}
Let $u$ be a nonnegative solution of the model equation
\eqref{eq:model_SPDE} with $\gamma\in[1/2,1)$. If the nonnegative initial function (or measure) $u_0$ satisfies
\[
    \int_{\bR}(1+x^2)^{q/2}\,u_0(x)\,\ud x<\infty
    \quad\text{for some }q> \frac{2\gamma-1}{2-2\gamma}, \text{ or for }q=0\text{ when }\gamma=1/2,
\]
then, almost surely, $u(t)$ has compact support at every $t>0$.
\end{customthm}
Theorem~\ref{thm:informal} is a special case of our two main results, which treat
general random operators $L$, a general class of sublinear noise
coefficients $\sigma$, and both function-valued and measure-valued
initial data. Let us now describe them in turn.

Our first main theorem (Theorem \ref{thm:CSP_regular_initial}) treats regular initial functions. Roughly speaking, if $u_0$ is nonnegative, has enough Sobolev regularity, and its spatial tail is sufficiently light, then \emph{every} nonnegative solution of \eqref{eq:SPDE} has compact support at every positive time, with probability one. The tail condition is related to the strength of the noise near zero, which we
quantify by the exponent $\lambda\in[1,2)$ in Assumption
\ref{asp:sigma}. For the model coefficient $\sigma(u)=u^\gamma$ (see \eqref{eq:model_SPDE}), we can take $\lambda=1$ when $\gamma\in(0,1/2]$ and $\lambda=2\gamma$ when $\gamma\in(1/2,1)$. In the strong-noise regime $\gamma \in (0, 1/2]$ (i.e. $\lambda=1$), which includes the super-Brownian case, finite mass 
\[
\int_{\bR} u_0(x)\, \ud x <\infty
\]
essentially suffices. In the weaker-noise regime $\gamma \in (1/2, 1)$ (i.e. $\lambda\in(1,2)$), we require a polynomial moment
\begin{equation}
\label{eq:intro_moment}
\int_{\bR}(1+x^2)^{q/2}\,u_0(x)\,\ud x<\infty
\qquad\text{for some}\qquad q>\frac{\lambda-1}{2-\lambda}.
\end{equation}
This quantifies the trade-off between the strength of the noise and the decay of the initial data required for instantaneous shrinking: the weaker the noise near 0, the lighter the initial tail must be. The threshold degenerates as $\gamma\uparrow1$, which is consistent with the failure of the compact support property at $\gamma=1$, even for compactly supported initial data.

Our second main result (Theorem \ref{thm:CSP_measure_initial}) treats measure-valued initial data. We allow the initial data to be nonnegative random Radon measures $\mu_0$ with 
\[ 
\mu_0 \in L_2\bigl(\Omega;H_2^{-1/2-\kappa}(\bR)\bigr) \qquad \text{for every }\kappa\in(0,1/2).
\]
This condition on $\mu_0$ is natural since all finite Radon measures on $\bR$ belong to $H_2^{-1/2-\kappa}(\bR)$ for every
$\kappa>0$. In particular, the initial data may be a Dirac mass, or an infinite superposition of atoms satisfying suitable moment conditions.  

In this generality, existence of a (weak) solution from measure-valued initial data is not part of the standard theory, because $\sigma$ is non-Lipschitz and the operator $L$ may have random coefficients. To the best of our knowledge, the resulting existence result (Theorem \ref{thm:CSP_measure_initial}(a)) is new even in its own right, and may be of independent interest. Then, under the moment conditions corresponding to \eqref{eq:intro_moment} (finite mass when $\lambda=1$), we prove that every nonnegative solution starting from $\mu_0$ has compact support at every positive time.

Both theorems are proved under a uniqueness-in-law assumption for
\eqref{eq:SPDE}, which is a well-known open problem for
non-Lipschitz coefficients in general. We emphasize, however, that
this assumption is known to hold in the model case
\eqref{eq:model_SPDE} when $\gamma\in[1/2,1)$ (see e.g. \cite{Myt98, Per02}).  Consequently, our results are unconditional in these cases, which yields Theorem \ref{thm:informal} above (see Remark \ref{rmk:uniqueness_discussion} and Corollary \ref{cor:model_case}).

Let us describe the main ideas of the proofs. Consider first the
setting of Theorem \ref{thm:CSP_regular_initial}, i.e., a regular
initial function $u_0$.  The strategy is to split $u_0$ into a compactly supported part and a tail part, i.e., 
\[
u_0=u_0\,\rho(\cdot/n)+u_0\,(1-\rho(\cdot/n)),
\]
where $\rho$ is a smooth cutoff. We then construct two auxiliary nonnegative processes $v_n$ and $w_n$ driven by two independent white noises (see Section \ref{subsec:splitting_construction}). The process $v_n$ starts from the compactly supported part and solves \eqref{eq:SPDE}, whereas $w_n$ starts from the tail and solves an equation with noise coefficient $\bigl(\sigma^2(v_n+w_n)-\sigma^2(v_n)\bigr)^{1/2}$. This coupling is designed precisely so that $y_n:=v_n+w_n$ is again a nonnegative solution of \eqref{eq:SPDE}, driven by a suitable space-time white noise. This recombination step is what will allow us to pass from the split system to an arbitrary solution, since uniqueness in law gives $\cL(u)=\cL(y_n)$.

The compactly supported part $v_n$ has compact support at all times by the compact-initial-data results of \cite{krylov1997result,han2023compact}. The heart of the proof is therefore to show that the tail part $w_n$ dies out, i.e.,  for every fixed $t_*>0$,
\[
\lim_{n\to\infty} \bP\Bigl(\inf_{0\le t\le t_*} M_n(t)>0\Bigr)=0, \quad \text{where} \quad M_n(t):=\int_{\bR}w_n(t,x)\,\ud x
\]
(see Lemma \ref{lem:tail_extinction}). Up to an exponential discounting, $M_n$ is a nonnegative supermartingale satisfying $\ud M_n\le K M_n\,\ud t+\ud N_n$ for a continuous local martingale $N_n$. By a one-dimensional extinction criterion for such semimartingales (Lemma \ref{lem:extinction}), $M_n$ then hits zero before time $t_*$ with high probability, provided $\ud\langle N_n\rangle_t\ge M_n^\alpha\,\ud t$ for some $\alpha<2$.

Producing this lower bound is the main technical step. Assumption
\ref{asp:sigma} gives $\sigma^2(v_n+w_n)-\sigma^2(v_n)\ge c\,w_n^\lambda$ on events where $v_n$ and $w_n$ are bounded, and we control the corresponding events
uniformly in $n$ by supremum estimates based on Krylov's $L_p$-theory (Section
\ref{sec:splitting_estimates}). It remains to compare $\int w_n^\lambda\,\ud x$
with $M_n$. For $\lambda=1$ the two agree, and finite mass suffices. For
$\lambda\in(1,2)$ an interpolation against the weighted norm $W_q(w_n)$
(Corollary \ref{cor:L_epsilon_bound_w_n}) is needed, and this is where the
moment threshold $q>(\lambda-1)/(2-\lambda)$ enters. Since zero is absorbing
for $M_n$, the tail stays extinct once it dies, and $y_n=v_n$ afterwards.

For measure-valued initial data, a solution must first be constructed. 
We mollify $\sigma$ and $\mu_0$ and pass to the limit in the resulting approximations. Since no estimate on them is uniform in $n$ up to $t=0$, we establish tightness only in $C((0,T];C_{\rm tem})$, and the limit is identified through the martingale problem. 
This last step requires care, since the coefficient field must be carried through the Skorokhod representation, so that the approximations and the limit are adapted to different filtrations, and the limiting bracket is identified by a localization with stopping times (Section \ref{sec:construction-measure}).

Once a solution is available, an additional regularization step is needed. After constructing a solution from $\mu_0$, we prove that for every $t_0>0$,  $u(t_0,\cdot)$ belongs almost surely to a Bessel potential space $H_p^\beta(\bR)$ with $\beta>1/p$. This is obtained by applying Krylov's $L_p$-estimate to $\chi u$, where $\chi$ is a smooth time cutoff vanishing near the origin. The cutoff removes the singular initial state, at the cost of a drift term that is controlled by the weighted mass of the solution (Section \ref{sec:proof_thm_measure}). Once this positive-time regularity is available, we restart the equation at time $t_0$ and apply Theorem \ref{thm:CSP_regular_initial} to the shifted equation. Here, the assumed uniqueness in law for the shifted equation then transfers the compact support property to the actual shifted process.

The paper is organized as follows. Section \ref{sec:main_results} states the assumptions and the main results, and Section \ref{sec:prelim} collects the analytic preliminaries. Sections
\ref{sec:splitting_estimates} and \ref{sec:tail_extinction} carry out the splitting construction and the tail extinction argument, proving Theorem \ref{thm:CSP_regular_initial}. Sections \ref{sec:construction-measure} and \ref{sec:proof_thm_measure} construct solutions from measure-valued initial data and prove Theorem \ref{thm:CSP_measure_initial}.

\paragraph{Notation.}
Throughout the paper, $N=N(a_1,\ldots,a_k)$ denotes a finite positive
constant depending only on the parameters $a_1,\ldots,a_k$; its value may
change from line to line. We write $N_{a_1,\ldots,a_k}$ to shorten the notation when convenient. For $a,b\in\bR$, we write
\[
    a\wedge b:=\min\{a,b\},
    \qquad
    a\vee b:=\max\{a,b\}.
\]
For $q\ge0$ and a nonnegative function or measure $f$ on $\bR$, set
\begin{equation}\label{eq:weight_ft}
    \Phi_q(x):=(1+x^2)^{q/2},
    \qquad
    W_q(f):=\int_{\bR}\Phi_q(x)\,f(\ud x),
\end{equation}
with the convention that $f(\ud x)=f(x)\,\ud x$ when $f$ is a function. In
particular, $W_0(f)$ is the total mass of $f$. We also fix the exponential
weights
\begin{equation}\label{eq:zeta}
    \zeta(x):=\frac1{\cosh x},
    \qquad
    \zeta_m(x):=\zeta(x/m),
    \quad m\ge1,
\end{equation}
which satisfy $0<\zeta_m\le1$, $\zeta_m\uparrow1$ as $m\to\infty$,
$\zeta_m\in L_p(\bR)$ for every $p\in[1,\infty)$, and
\begin{equation}\label{eq:zeta_bounds}
    |\zeta_m'(x)|\le N\zeta_m(x),
    \qquad
    |\zeta_m''(x)|\le N\zeta_m(x),
\end{equation}
with $N$ independent of $m$.

For a random variable $X:\Omega\to\bR$ and $1<p<\infty$, define
\[
    \|X\|_{L_p(\Omega)}
    :=
    \bigl(\bE[|X|^p]\bigr)^{1/p},
\]
and, for a Banach space $E$, we denote by $L_p(\Omega;E)$ the space of
$E$-valued random variables with $\bE[\|X\|_E^p]<\infty$. 

We denote the support of a function $f=f(t,x)$ at time $t>0$ by $\supp(f(t))$.

%%%%%%%%%%%%%%%%%%%%%%%%%%%%%%%%%%%%%%%%%%%%%%%%%%%%%%%%%%%%%%%%%%%%%%%%%%%%%%%%%%%%%%%%%%%%%%%%

\section{Main results}
\label{sec:main_results}
Throughout the paper, we fix an arbitrary nonrandom constant $T>0$.
All filtered probability spaces
$(\Omega,\mathcal F,\{\mathcal F_t\}_{t\ge0},\bP)$ are assumed to
satisfy the usual conditions, and $\mathcal P$ denotes the predictable
$\sigma$-field associated with $\{\mathcal F_t\}$.

A \emph{space-time white noise} relative to $\{\mathcal F_t\}$ is a
mean-zero Gaussian family
$\{\xi(\varphi):\varphi\in C_c^\infty(\bR_+\times\bR)\}$
with covariance
\[
    \E[\xi(\varphi_1)\xi(\varphi_2)]
    =\int_0^\infty\!\!\int_{\bR}
    \varphi_1(t,x)\varphi_2(t,x)\,\ud t\,\ud x,
\]
such that $\xi(\varphi)$ is $\mathcal F_t$-measurable whenever
$\varphi$ vanishes on $(t,\infty)\times\bR$, and the noise on
$(s,\infty)\times\bR$ is independent of $\mathcal F_s$ for every
$s\ge0$. The covariance identity extends $\xi$ uniquely to
$L_2(\bR_+\times\bR)$, and we write
\[
    \xi(\varphi)
    :=\int_0^\infty\!\!\int_{\bR}\varphi(t,x)\,\xi(\ud t,\ud x),
\]
with stochastic integrals against predictable integrands understood
in the sense of \cite{walsh1986introduction,dalang1999extending}.

Equation \eqref{eq:SPDE} is considered on a filtered probability
space carrying a space-time white noise $\xi$ relative to
$\{\mathcal F_t\}$. The auxiliary constructions in
Sections \ref{sec:splitting_estimates} and \ref{sec:construction-measure} may take place on
different filtered probability spaces. We first provide assumptions on the noise coefficient $\sigma$.

\begin{assumption}
\label{asp:sigma}
The function $\sigma:\bR\to[0,\infty)$ is continuous and satisfies the following
conditions:
\begin{enumerate}[(i)]
\item $\sigma(u)=0$  for all $u\le0$.

\item
There exist constants $\bc_1>0$ and $\gamma\in(0,1)$ such that
\[
\sigma(u)\le \bc_1(u^\gamma+u),
\qquad u\ge0.
\]

\item
There exists $\lambda\in[1,2)$ such that, for every $K>0$, there exists
$\bc_2=\bc_2(K)>0$ satisfying
\begin{equation}
\label{eq:lambda_condition}
    \bc_2|u-v|^\lambda
    \le
    \sigma(u)^2-\sigma(v)^2
    \qquad
    \text{for all }0\le v\le u\le K.
\end{equation}
\end{enumerate}
\end{assumption}

\begin{remark}
Assumption \ref{asp:sigma}(iii) implies that
$u\mapsto\sigma(u)$ is nondecreasing on $[0,\infty)$. In addition, there is no continuous function $\sigma$ satisfying
\eqref{eq:lambda_condition} with $\lambda\in(0,1)$. Indeed, if such a function
existed, then for the uniform partition $x_i=iK/N$ where $i=0, \dots, N$, we would have
\[
    \sigma^2(K)-\sigma^2(0)
    =
    \sum_{i=1}^N\left(\sigma^2(x_i)-\sigma^2(x_{i-1})\right)
    \ge \bc_2 K^\lambda N^{1-\lambda}, 
\]
which diverges as $N\to\infty$ if $\lambda<1$.
\end{remark}

\begin{example}
A typical example of $\sigma$ is
\[
    \sigma(u)=u^\gamma\mathbf 1_{\{u\ge0\}},
    \qquad \gamma\in(0,1).
\]
If \(\gamma\in(1/2,1)\), then \eqref{eq:lambda_condition} holds with
\(\lambda=2\gamma\) since  $(u-v)^{2\gamma} \le u^{2\gamma}-v^{2\gamma}$ for  $0\le v\le u.$

\noindent If \(\gamma\in(0,1/2]\), then \eqref{eq:lambda_condition} holds with
\(\lambda=1\). Indeed, for \(\gamma=1/2\), this is immediate. For
\(\gamma\in(0,1/2)\), if \(0\le v\le u\le K\), then
\[
    u^{2\gamma}-v^{2\gamma}
    =
    \int_v^u 2\gamma z^{2\gamma-1}\,\ud z
    \ge
    2\gamma K^{2\gamma-1}(u-v).
\]
%The case \(v=0\) is also immediate, since for \(0\le u\le K\), $u^{2\gamma}\ge K^{2\gamma-1}u.$
The class of admissible coefficients is not limited to pure powers. For example,
\[
    \sigma(u)
    =
    u^\theta\bigl(\log(1+u)+1\bigr)\mathbf 1_{\{u\ge0\}},
    \qquad \theta\in(0,1),
\]
also satisfies Assumption \ref{asp:sigma}, with the same value of \(\lambda\)
as in the power case. Moreover,
\[
    \sigma(u)
    =
    \sqrt{u+u^{2\theta}}\,\mathbf 1_{\{u\ge0\}},
    \qquad \theta\in(0,1),
\]
satisfies \eqref{eq:lambda_condition} with \(\lambda=1\).
\end{example}

Below are  assumptions on the operator coefficients, i.e., $a$, $b$, and $c$ in \eqref{second order operator}. 
\begin{assumption}
\label{asp:coefficients}
The coefficient field $(a,b,c)$ is a random element of 
\begin{equation}\label{eq:def_coefficient_space}
    \cC : =C( [0,T]; C^2_{\mathrm{loc}}(\bR)) \times C( [0,T]; C^1_{\mathrm{loc}}(\bR)) \times C([0,T]; C_{\mathrm{loc}}(\bR)) ,
\end{equation} endowed with its natural Fr\'echet topology. Moreover, $(a,b,c)$ satisfies the following:

\begin{enumerate}[(i)]
\item
The coefficients $a,b,c$ are $\mathcal P\times\mathcal B(\bR)$-measurable.

\item
The functions $\partial_x^2a, \partial_xa, a, \partial_xb, b, c$ are continuous in $x$.

\item
There exists a finite nonrandom constant $\nu\ge1$ such that for every $(\omega,t,x)\in\Omega\times[0,T]\times\bR$,
% \begin{equation}
% \label{ellipticity}
%     a(t,x)\ge \nu^{-1}
%     \qquad
%     \text{for all }(\omega,t,x)\in\Omega\times[0,T]\times\bR,
% \end{equation}
\begin{equation}
\label{ellipticity}
    a(t,x)\ge \nu^{-1},
\end{equation}
and
% \[
%     \|a(t,\cdot)\|_{C^2(\bR)}
%     +
%     \|b(t,\cdot)\|_{C^1(\bR)}
%     +
%     \|c(t,\cdot)\|_{C(\bR)}
%     \le \nu \quad\text{for all $(\omega,t)\in\Omega\times[0,T]$},
% \]
\[
    \|a(t,\cdot)\|_{C^2(\bR)}
    +
    \|b(t,\cdot)\|_{C^1(\bR)}
    +
    \|c(t,\cdot)\|_{C(\bR)}
    \le \nu,
\]where $C^{k}(\bR)$ is the set of all $k$-times continuously differentiable functions on $\bR$ with the norm
\[
    \|f\|_{C^k(\bR)}
    :=
    \sum_{j=0}^k\sup_{x\in\bR}|D^jf(x)|.
\]
\end{enumerate}
\end{assumption}

\begin{remark}
The requirement in Assumption \ref{asp:coefficients} that the coefficient field $(a,b,c)$ be a $\cC$-valued random variable is used only in the proof of Theorem~\ref{thm:CSP_measure_initial}, specifically in the Skorokhod representation argument in Proposition~\ref{prop:existence_measure_initial}.
\end{remark}

We now state the compact support property (CSP) for regular initial functions. Below solutions are understood in the sense of Definition \ref{def:weak_soln} (see also Section \ref{sec:prelim} for the definitions of $C_{\rm tem}$ and $H_p^\alpha(\R)$). 

\begin{theorem}[CSP for regular initial functions]
\label{thm:CSP_regular_initial}
Suppose Assumptions \ref{asp:sigma} and \ref{asp:coefficients} hold,
and let $\gamma\in(0,1)$ and $\lambda\in[1,2)$ be the constants in
Assumption \ref{asp:sigma}.
Let $u_0\ge0$ be an $\mathcal F_0$-measurable random initial function
satisfying
\begin{equation}\label{eq:asp_initial_regularity}
     u_0\in L_p(\Omega;H_p^{\eta-2/p}(\bR))
\end{equation}
for some $\eta>0$ and some $p>6$ such that $p>\frac3\eta$.
Assume in addition one of the following conditions.
\begin{enumerate}
\item[\rm (i)]
If $\lambda=1$, assume either
\[
    \E\int_{\bR}u_0(x)\,\ud x<\infty
    \qquad\text{and}\qquad
    p\gamma\ge1,
\]
or that there exists $q>0$ such that
\[
    \E\int_{\bR}(1+x^2)^{q/2}u_0(x)\,\ud x<\infty
    \qquad\text{and}\qquad
    p\gamma >\frac1{q+1}.
\]
\item[\rm (ii)]
If $\lambda\in(1,2)$, assume that there exists
\[
    q>\frac{\lambda-1}{2-\lambda}
\]
such that
\[
    \E\int_{\bR}(1+x^2)^{q/2}u_0(x)\,\ud x<\infty
    \qquad\text{and}\qquad
    p\gamma>\frac1{(q+1)}.
\]
\end{enumerate}
Assume finally that uniqueness in law holds for nonnegative
$C([0,T];C_{\rm tem})$-valued weak solutions of \eqref{eq:SPDE},
with the joint law $\mathcal L(u_0,a,b,c)$ of the initial function
and the coefficient field fixed.
Then every nonnegative solution $u\in C([0,T];C_{\rm tem})$ of
\eqref{eq:SPDE} with initial function $u_0$ has compact support at
every $t\in(0,T]$, almost surely.
\end{theorem}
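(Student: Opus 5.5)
We follow the splitting strategy outlined in the introduction. Fix a smooth cutoff $\rho$ with $\rho=1$ on $[-1,1]$ and $\supp\rho\subset[-2,2]$, and write $u_0=u_0\rho(\cdot/n)+u_0(1-\rho(\cdot/n))$. On an auxiliary filtered probability space carrying $(u_0,a,b,c)$ with the prescribed joint law and two independent space-time white noises $\xi_1,\xi_2$, we construct nonnegative $C([0,T];C_{\rm tem})$-valued processes $v_n,w_n$. These satisfy
\[
\partial_t v_n=Lv_n+\sigma(v_n)\xi_1,\qquad \partial_t w_n=Lw_n+\bigl(\sigma^2(v_n+w_n)-\sigma^2(v_n)\bigr)^{1/2}\xi_2,
\]
with initial data $u_0\rho(\cdot/n)$ and $u_0(1-\rho(\cdot/n))$. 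We would build them by mollifying $\sigma$ and passing to the limit using the $L_p$/tightness machinery; the regularity assumption \eqref{eq:asp_initial_regularity} with $p>6$, $p>3/\eta$ gives Hölder continuity and uniform supremum bounds. Set $y_n=v_n+w_n$ and
\[
\tilde\xi=\frac{\sigma(v_n)\xi_1+(\sigma^2(y_n)-\sigma^2(v_n))^{1/2}\xi_2}{\sigma(y_n)}
\]
(with $\xi_1$ used on $\{\sigma(y_n)=0\}$). Then $\tilde\xi$ is a space-time white noise, and $y_n$ is a nonnegative solution of \eqref{eq:SPDE} with initial datum $u_0$. By the uniqueness-in-law assumption, $\mathcal L(u)=\mathcal L(y_n)$ for every $n$. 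Since the event $\{\supp u(t)\text{ compact for all }t\in(0,T]\}$ is measurable in $C([0,T];C_{\rm tem})$ (using continuity, it can be expressed through rational times and countable spatial boxes), it suffices to show $\bP(\exists t\in(0,T]:\supp y_n(t)\text{ not compact})\to0$.

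By the compact-initial-data CSP of Krylov and Han--Kim--Yi, $v_n(t)$ has compact support for all $t\le T$ a.s. Hence it is enough to show that for each fixed $t_*>0$,
\[
\lim_{n\to\infty}\bP\Bigl(\inf_{t\le t_*}M_n(t)>0\Bigr)=0,\qquad M_n(t)=\int w_n(t,x)\,\ud x.
\]
Indeed, zero is absorbing for $M_n$, because the noise coefficient vanishes when $w_n\equiv0$ and uniqueness for the linear equation forces $w_n=0$ afterwards. So on the complementary event $y_n(t)=v_n(t)$ for all $t\ge t_*$. Letting $t_*\downarrow0$ along a sequence then gives the claim for all $t\in(0,T]$.

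For the extinction we test the $w_n$ equation against $\zeta_m$ and let $m\to\infty$ via \eqref{eq:zeta_bounds}. Since $L^*1=c-b'+a''$ is bounded by $N\nu$, this gives $\ud M_n\le KM_n\,\ud t+\ud N_n$ with
\[
\ud\langle N_n\rangle_t=\int\bigl(\sigma^2(y_n)-\sigma^2(v_n)\bigr)\,\ud x\,\ud t .
\]
On the event $A_{n,K}=\{\sup_{t\le T,x}(v_n+w_n)\le K\}$, Assumption \ref{asp:sigma}(iii) yields $\ud\langle N_n\rangle\ge \bc_2(K)\int w_n^\lambda\,\ud x\,\ud t$. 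The supremum estimates make $\bP(A_{n,K}^c)$ small uniformly in $n$ as $K\to\infty$. For $\lambda=1$ this is $\bc_2M_n$. For $\lambda\in(1,2)$ we interpolate: by Hölder with the weight $\Phi_q$,
\[
M_n\le\Bigl(\int w_n^\lambda\Bigr)^{\theta}W_q(w_n)^{1-\theta},\qquad\text{hence}\qquad \int w_n^\lambda\gtrsim M_n^{\alpha}\,W_q(w_n)^{-\beta},
\]
with $\alpha<2$ exactly when $q>(\lambda-1)/(2-\lambda)$. We then control $\sup_tW_q(w_n(t))$ by a supermartingale estimate (Corollary \ref{cor:L_epsilon_bound_w_n}); it is small in probability since $W_q(w_n(0))\to0$ in $L_1(\Omega)$ by dominated convergence. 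The conditions $p\gamma\ge1$ or $p\gamma>1/(q+1)$ are what make these weighted and supremum bounds hold. Finally we apply the one-dimensional criterion Lemma \ref{lem:extinction}: a nonnegative semimartingale with small initial value, drift $\le KM$, and quadratic variation rate $\ge cM^\alpha$ with $\alpha<2$ hits zero before $t_*$ with probability tending to one as $M(0)\to0$. Here $\E M_n(0)\to0$, and we localize on $A_{n,K}$ and on a bound for $W_q$.

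The main obstacle is the lower bound on $\ud\langle N_n\rangle$ uniform in $n$. It requires sup-norm control of $v_n,w_n$ from Krylov's $L_p$-theory that is uniform in the cutoff, and, for $\lambda>1$, uniform-in-time control of the weighted mass $W_q(w_n)$ so that the interpolation exponent is strictly below $2$. We would first try to get the weighted mass bound by Itô's formula for $\int\Phi_q w_n$, using $|\Phi_q'|,|\Phi_q''|\le N\Phi_q$ and Doob's inequality.
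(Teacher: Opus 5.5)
Your outline is the paper's proof. It uses the same splitting, the same recombined noise, uniqueness in law to get $\mathcal L(u)=\mathcal L(y_n)$, the compact-data CSP for $v_n$, the mass inequality $\ud M_n\le KM_n\,\ud t+\ud N_n$, uniform sup-bounds on $\{v_n,w_n\le K\}$, and Lemma~\ref{lem:extinction}. Two steps are wrong as written, though both are easy to repair.

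\textbf{Absorption at zero.} Your justification that zero is absorbing does not work. After $w_n$ vanishes, its equation is still nonlinear: the coefficient $(\sigma^2(v_n+w_n)-\sigma^2(v_n))^{1/2}$ is non-Lipschitz and depends on $w_n$ itself. Treating it as a free term and invoking uniqueness for the linear equation is circular, because that free term vanishes only if $w_n$ already stays zero. Pathwise uniqueness from zero for the nonlinear equation is not available in this generality. The correct argument uses what you already have. From $\ud M_n\le KM_n\,\ud t+\ud N_n$, the process $X_t=e^{-Kt}M_n(t)$ is a nonnegative local supermartingale, hence a supermartingale (localize and use conditional Fatou). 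Let $\theta=\inf\{t:M_n(t)=0\}$. Optional sampling gives $\E[X_t\mid\mathcal F_\theta]\le X_\theta=0$ for $t\ge\theta$. So $M_n$ vanishes after $\theta$, and by continuity and nonnegativity so does $w_n$. This is how the paper proves \eqref{eq:extinction_absorbing}.

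\textbf{The interpolation.} The inequality $M_n\le(\int w_n^\lambda)^\theta W_q(w_n)^{1-\theta}$ is false for $\lambda>1$, since it is not invariant under $w_n\mapsto cw_n$. What holds is a three-factor H\"older inequality, obtained by writing $w=(w^\lambda)^\theta(\Phi_qw)^{1-\lambda\theta}\Phi_q^{-(1-\lambda\theta)}$:
\[
M_n\le\Bigl(\int_{\bR}w_n^\lambda\,\ud x\Bigr)^{\theta}W_q(w_n)^{1-\lambda\theta}\Bigl(\int_{\bR}\Phi_q^{-\frac{1-\lambda\theta}{(\lambda-1)\theta}}\,\ud x\Bigr)^{(\lambda-1)\theta}.
\]
The last factor is finite if and only if $1/\theta>\lambda+(\lambda-1)/q$. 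Setting $\alpha=1/\theta$, this yields $\int w_n^\lambda\ge c\,M_n^\alpha W_q(w_n)^{-(\alpha-\lambda)}$. Your threshold $q>(\lambda-1)/(2-\lambda)$ is then correct. The paper does the same thing in two steps, passing through $\int w_n^r$ with $r=(\alpha-\lambda)/(\alpha-1)$ and Corollary~\ref{cor:L_epsilon_bound_w_n}. Note that the bound on $\sup_tW_q(w_n(t))$ itself comes from Lemma~\ref{lem:mtg_bound_w}, not from the corollary.

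\textbf{Measurability.} The event ``$\supp u(t)$ compact for all $t\in(0,T]$'' cannot be described through rational times alone. Compactness at every rational time does not imply it at every time, since support radii may blow up near an irrational time. Use instead the uniform event ``$\supp u(t)\subseteq[-R,R]$ for all $t\in[t_*,T]$, for some $R$''. It is Borel, and it is exactly what your argument produces.
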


\begin{remark}
   The proof of Theorem~\ref{thm:CSP_regular_initial} yields the following slightly stronger conclusion: almost surely, for every
$m\ge1$ there exists $R=R(m,\omega)>0$ such that
\[
    \supp\big(u(t)\big)\subseteq[-R,R]
    \qquad\text{for all }t\in\Big[\tfrac{T}{2m},\,T\Big].
\]
In other words, almost surely, the supports of $u(t)$ are uniformly bounded on every compact subinterval of $(0,T]$. See Section \ref{subsec:CSP_proof_regular}.
\end{remark}

\begin{remark}
Let us briefly explain the roles of the parameter assumptions in Theorem \ref{thm:CSP_regular_initial}.

First, the condition $p>\frac3\eta$ ensures positive H\"older
regularity of the initial function via the Sobolev embedding
(Lemma~\ref{lem:prop_of_bessel_space}\eqref{sobolev-embedding}),
which is needed to apply the compact-support result to $v_n$, whose
initial datum is $u_0\rho(\cdot/n)$; see \eqref{eq:spde_v_n}. The condition $p>6$ together with $p>\frac3\eta$ allows $\eta$ to be lowered below $1/2$ while preserving $\eta p>3$. The upper bound $\eta<1/2$ is needed to apply Lemma \ref{lem:white_noise_estimate} to the noise term; see the beginning of Section \ref{sec:splitting_estimates}.

Second, the conditions involving $p\gamma$ control the noise term in the uniform $L^\infty$-estimates for $v_n$ and $w_n$; see
Lemma \ref{lem:uniform_bound} and Corollary \ref{cor:L_epsilon_bound_w_n}.

Finally, the assumptions on the spatial tail of $u_0$ force the tail
part to become extinct before any fixed positive time; see Lemma \ref{lem:tail_extinction}. Note that when $\lambda=1$ and $p\gamma\ge1$,
this mechanism requires only finite mass. In particular, for the super-Brownian coefficient $\sigma(u)=\sqrt u$ (so that $\gamma=1/2$ and $\lambda=1$), the condition $p\gamma\ge1$ holds
automatically since $p>6$, and finite mass alone suffices. When $\lambda\in(1,2)$, the extinction argument requires quantitative decay of the initial tail, leading to the threshold $q>\frac{\lambda-1}{2-\lambda}$. Thus, the weaker the noise near
zero (i.e., the larger $\lambda$), the stronger the required spatial decay.
\end{remark}

We now turn to measure-valued initial data. In this case the nonnegative solution is regular only at positive times. We therefore restart the equation at a fixed time $t_0>0$ and apply Theorem \ref{thm:CSP_regular_initial} to the restarted process. Accordingly, the uniqueness-in-law hypothesis is required not for \eqref{eq:SPDE} itself but for its time-shifted versions, which we now formulate.

\begin{assumption}
\label{ass:shifted_uniqueness}
For every $s\in[0,T)$, uniqueness in law holds for nonnegative weak
solutions on $[0,T-s]$ of the shifted equation
\[
    \partial_t z(t,x)
    = L^{(s)}z(t,x) + \sigma(z(t,x))\,\xi^{(s)}(t,x),
    \qquad z(0,\cdot)=Z_0,
\]
where $L^{(s)} := a(s+t,x)\partial_x^2 + b(s+t,x)\partial_x + c(s+t,x)$, $Z_0$ is a nonnegative $\mathcal F_s$-measurable random initial
condition,  and $\xi^{(s)}$ is the shifted white noise, defined by
$\xi^{(s)}(\varphi):=\xi(\varphi(\cdot-s,\cdot))$ for
$\varphi\in C_c^\infty(\bR_+\times\bR)$, which is a space-time white
noise relative to the filtration $\{\mathcal F_{s+t}\}_{t\ge0}$.
Uniqueness in law is understood with the joint law of $Z_0$ and the
shifted coefficient field
$\bigl(a(s+\cdot,\cdot),\,b(s+\cdot,\cdot),\,c(s+\cdot,\cdot)\bigr)$
fixed.
\end{assumption}

\begin{theorem}[Existence and CSP for measure-valued initial data]
\label{thm:CSP_measure_initial}
Suppose Assumptions \ref{asp:sigma} and \ref{asp:coefficients} hold,
and let $\gamma\in(0,1)$ and $\lambda\in[1,2)$ be the constants in
Assumption \ref{asp:sigma}.
Let $\mu_0$ be a nonnegative $\mathcal F_0$-measurable random Radon
measure on $\bR$ such that
\[
    \mu_0\in L_2\bigl(\Omega;H_2^{-1/2-\kappa}(\bR)\bigr)
    \qquad\text{for every }\kappa\in(0,1/2),
\]
and assume one of the following conditions:
\begin{enumerate}
\item[\rm (i)] if $\lambda=1$, then $\E[\mu_0(\bR)]<\infty$;
\item[\rm (ii)] if $\lambda\in(1,2)$, then there exists
$q>\frac{\lambda-1}{2-\lambda}$ such that
$\E\int_{\bR}(1+x^2)^{q/2}\,\mu_0(\ud x)<\infty$.
\end{enumerate}
Then the following hold.
\begin{enumerate}
\item[\rm (a)] {\rm(Existence)} There exists a nonnegative solution $u\in C((0,T];C_{\rm tem})$ of \eqref{eq:SPDE}.
\item[\rm (b)] {\rm(Compact support)} If, in addition,
Assumption \ref{ass:shifted_uniqueness} holds, then every
nonnegative solution $u\in C((0,T];C_{\rm tem})$ of \eqref{eq:SPDE} has compact support at every $t\in(0,T]$, almost surely.
\end{enumerate}
\end{theorem}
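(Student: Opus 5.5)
The plan is to follow the two-stage strategy outlined in the introduction: first construct a solution by approximation, then regularize at positive times and restart so that Theorem \ref{thm:CSP_regular_initial} applies.

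\emph{Part (a).} I would mollify both data. Let $\sigma_n$ be Lipschitz approximations of $\sigma$ satisfying Assumption \ref{asp:sigma}(ii) uniformly in $n$, and set $u_{0,n}:=\mu_0*p_{1/n}$, where $p_\epsilon$ is the heat kernel. For each $n$, standard theory (Krylov's $L_p$-theory) gives a nonnegative strong solution $u_n$.

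The uniform estimates are of two kinds. Testing the equation against $\Phi_q\zeta_m$ and using \eqref{eq:zeta_bounds} gives bounds on $\E\sup_t W_q(u_n(t))$, or on the mass when $q=0$; these follow because $|L^*\Phi_q|\le N\Phi_q$ and the stochastic integral has mean zero. For the local bounds I would work with $\chi(t)u_n$, where $\chi$ vanishes near $0$. This function solves an equation with zero initial datum and extra drift $\chi'u_n$, whose size is controlled by the weighted mass. Krylov's $L_p$-estimates then give $\chi u_n$ uniformly bounded in $L_p(\Omega;C^{\alpha}([t_0,T]\times[-R,R]))$ with weights, for every $t_0>0$.

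These bounds yield tightness in $C([t_0,T];C_{\rm tem})$ for each $t_0$, hence in $C((0,T];C_{\rm tem})$. I would then apply Skorokhod representation jointly to $(u_n,a,b,c,\mu_0)$; Assumption \ref{asp:coefficients} makes the coefficient field a $\cC$-valued random variable, so it can be carried along. The limit is identified through the martingale problem. For test functions $\varphi$, the process $\langle u(t),\varphi\rangle-\langle u(s),\varphi\rangle-\int_s^t\langle u,L^*\varphi\rangle\,\ud r$ is a martingale with bracket $\int_s^t\langle\sigma^2(u),\varphi^2\rangle\,\ud r$. To pass to the limit in the bracket I would localize with stopping times on $\sup|u_n|$ over compacts, using the locally uniform convergence $\sigma_n\to\sigma$. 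The initial condition $u(t)\to\mu_0$ holds weakly in $H_2^{-1/2-\kappa}$, and this follows from the uniform mass bounds together with the $t\to0$ continuity of the mild heat-semigroup part.

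\emph{Part (b).} Let $u$ be any nonnegative solution and fix $t_0>0$.
\begin{enumerate}
\item[Step 1.] Show that $u(t_0)\in L_p(\Omega;H_p^{\eta-2/p})$ for suitable $p>6$ and $\eta p>3$, by the same cutoff argument applied to $\chi u$. A localization is needed here because $\E$-moments may be infinite; I would first pass to an equivalent probability on the events $\{W_q(u(\cdot))\le K\}$, or use stopping.
\item[Step 2.] Show that $\E W_q(u(t_0))<\infty$, with the same $q$ as in the hypothesis, by the weighted-mass supermartingale bound. The choice of $p$ must also satisfy $p\gamma>1/(q+1)$, which is possible since $p$ can be taken large.
\item[Step 3.] Consider $z(t):=u(t_0+t)$. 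It solves the shifted equation with white noise $\xi^{(t_0)}$, and Assumption \ref{ass:shifted_uniqueness} supplies the uniqueness-in-law hypothesis. Theorem \ref{thm:CSP_regular_initial} then gives compact support of $z(t)$ for every $t\in(0,T-t_0]$.
\item[Step 4.] Let $t_0=1/k\downarrow0$ to cover all of $(0,T]$.
\end{enumerate}

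The main obstacle I expect is Step 1. Large $p$ requires $L_p$ moments of $u(t_0)$ in a Bessel space, but the singular initial measure only controls the mass. I would handle this by proving the estimate for $\chi u$ with the drift $\chi' u$ estimated in weighted $L_p$ via interpolation between $W_q$ and the local $L_\infty$ bound. If the moments are genuinely unavailable, I would condition on $\mathcal F_{t_0}$-measurable events of probability close to $1$ on which $u(t_0)$ is good, and apply the theorem to $u(t_0)\mathbf 1_{A}$. This uses the fact that the restarted solution restricted to $A$ is again a solution.
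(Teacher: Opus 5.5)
The genuine gap is at $t=0$ in identifying the limit in part (a). The overall architecture is the paper's: mollify $\sigma$ and $\mu_0$, prove tightness in $C((0,T];C_{\rm tem})$, apply Skorokhod jointly with $(\mu_0,a,b,c)$, solve the martingale problem; then regularize $\chi u$, localize on an $\mathcal F_{t_0}$-event, and restart. Getting tightness from $\chi u_n$, with the drift $\chi' u_n$ controlled by the mass, is a legitimate alternative to the paper's good-time bootstrap.

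Mass bounds control the drift $\int_0^\delta(u_n,L_n^*\varphi)\,\ud s$, but not the bracket $A_n^\varphi(\delta)=\int_0^\delta\!\int\sigma_n(u_n)^2\varphi^2\,\ud x\,\ud s$. The reason is that $\sigma^2$ may grow like $u^2$ (and like $u^{2\gamma}$ with $2\gamma>1$, already in the model case), while $u_{n,0}$ is unbounded in $n$ near an atom of $\mu_0$. Without a uniform small-time bound such as $\sup_n\E A_n^\varphi(\delta)\to0$ as $\delta\downarrow0$, you cannot make $\sup_{t\le\delta}|M_n^\varphi(t)|$ small uniformly in $n$. So you can show neither that the limit attains $\mu_0$, nor that $M^\varphi$ is a martingale on all of $[0,T]$ with bracket $A^\varphi$ and $A^\varphi(T)<\infty$ (Definition~\ref{def:weak_soln}(ii)).

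Your proposed fixes do not close this. Stopping times on $\sup_K u_n$ collapse to $0$ as $n\to\infty$ for a Dirac mass. A mild heat-semigroup argument is unavailable for a random adapted $L$, because the stochastic convolution against the random fundamental solution is anticipating. Even for $L=\partial_x^2$ it would still need second-moment bounds near $t=0$.

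The paper supplies exactly this estimate in Lemma~\ref{lem:uniform_localized_sobolev}. It applies Krylov's $L_2$-estimate to $u_n\zeta_m$ in $\cH_2^{1/2-\kappa}$, with initial datum in $U_2^{1/2-\kappa}=L_2(\Omega;H_2^{-1/2-\kappa})$; this is where that hypothesis, which your proposal never uses, enters. Interpolation then gives $\sup_n\|u_n\zeta_m\|_{\bL_2(\delta)}^2\le N_m\delta^{\ep}$, hence $\sup_n\E A_n^\varphi(\delta)\to0$. The localization is then by hitting times of $A_n^\varphi$ at levels $R_j$ chosen so that $\tau_{n,R_j}\to\tau_{R_j}$.

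There is a second, smaller gap in Step 2 of (b), and Step 1 rests on it, since the drift $\chi' u$ is estimated through the mass of $u$ on $[s/2,T]$. For an \emph{arbitrary} solution started from a measure, you cannot run the weighted-mass supermartingale argument from $t=0$:
\begin{enumerate}
\item the weak formulation is available only for $\varphi\in C_c^\infty$;
\item the cutoff error when testing against $\Phi_q\zeta_m\chi_R$ vanishes as $R\to\infty$ only under an a priori bound like $\int_0^T(u(s),\Phi_q\zeta_m)\,\ud s<\infty$;
\item membership in $C((0,T];C_{\rm tem})$ plus vague convergence to $\mu_0$ gives no such control near $t=0$.
\end{enumerate}
On $[\delta,T]$ the argument does work, but vague convergence does not let you bound $W_q(u(\delta))$ by $W_q(\mu_0)$ as $\delta\downarrow0$. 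The paper proves \eqref{eq:uniform_weighted_u} for the constructed solution by Fatou from the $u_n$. It then transfers this path property to an arbitrary $u$ by uniqueness in law for the equation started at $\mu_0$, and you need the same step.

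Two minor points. The supermartingale maximal inequality controls $\E\sup_t W_q(u_n(t))^\alpha$ only for $\alpha<1$, not $\E\sup_t W_q(u_n(t))$ itself; this still suffices for your tail bounds. Applying the theorem to $\mathbf 1_A u(t_0+\cdot)$ with $A\in\mathcal F_{t_0}$ is a valid substitute for the paper's conditioning on $\fA_R$.
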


To the best of our knowledge, part {\rm(a)} is the first weak existence result from measure-valued initial data at this level of generality, where $\sigma$ is non-Lipschitz and $L$ is a general, possibly random, second-order operator. The overall strategy---mollifying the non-Lipschitz coefficient, solving the regularized equations, and extracting a limit by tightness---goes back to \cite{mueller1992compact,shiga1994two} for continuous initial data, and is outlined for measure-valued initial data in \cite[Section 2]{ChenXia}. All of these works, however, concern the Laplacian, for which the identification of the limit is comparatively direct. For a general  random operator $L$, the coefficient field must be carried through the Skorokhod representation. Here, the approximating solutions and the limit are adapted to different filtrations, and we identify the limiting martingale problem by a localization argument that accommodates both the varying filtrations and the random coefficients (Proposition \ref{prop:existence_measure_initial}). The measure-valued initial
datum enters earlier, at the compactness step. Since no estimate uniform in $n$ is available up to $t=0$,  tightness can only be established in $C((0,T];C_{\rm tem})$ (Proposition \ref{prop:tightness}).

\begin{remark}
The reader may notice that Theorem \ref{thm:CSP_measure_initial} imposes no
condition of the form $p\gamma\ge1$, in contrast with Theorem
\ref{thm:CSP_regular_initial}. The reason is that in Theorem
\ref{thm:CSP_regular_initial} the exponent $p$ is part of the initial Sobolev
regularity assumption, so that the moment condition must distinguish between
$p\gamma\ge1$ and $p\gamma<1$. For measure-valued initial data the proof first
regularizes the solution at a positive time and is then free to choose $p$
before restarting the equation (see Proposition
\ref{prop:positive_time_Sobolev}). In particular, when $\lambda=1$ finite mass
alone suffices. That is, we choose $p$ with $p\gamma\ge1$ after the regularization and
apply the finite-mass branch of Theorem \ref{thm:CSP_regular_initial} (see
Section \ref{sec:proof_thm_measure}).
\end{remark}

\begin{remark}[On the uniqueness-in-law assumptions]
\label{rmk:uniqueness_discussion}
Theorems \ref{thm:CSP_regular_initial} and \ref{thm:CSP_measure_initial} are
conditional on uniqueness in law for nonnegative solutions of \eqref{eq:SPDE}
(and, for Theorem~\ref{thm:CSP_measure_initial}, of its time-shifted versions;
see Assumption \ref{ass:shifted_uniqueness}). For sublinear noise coefficients, well-posedness (especially uniqueness) is a challenging and open problem (see, e.g.,  \cite{Myt98, MPS06, MP11}). For the model equation
\eqref{eq:model_SPDE} with $\sigma(u)=u^\gamma\mathbf 1_{\{u\ge0\}}$, pathwise
non-uniqueness of \emph{signed} solutions holds for $\gamma<3/4$
(\cite{MMP14}), whereas uniqueness in law of \emph{nonnegative} solutions is
known for $\gamma\in[1/2,1)$: for $\gamma=1/2$ by the classical theory of
one-dimensional super-Brownian motion \cite{Per02}, and for $\gamma\in(1/2,1)$
by Mytnik's duality argument \cite{Myt98}, which extends with minor
modifications to finite measure-valued initial data.

For $\gamma\in(0,1/2)$ the question is open, but uniqueness in law of
nonnegative solutions is expected. The counterexample of \cite{BMP10} for
nonnegative solutions establishes pathwise, not weak, non-uniqueness, and
relies on an additive immigration term absent from \eqref{eq:model_SPDE}.
The corresponding one-dimensional equation $\ud X_t=X_t^\gamma\,\ud B_t$ is
instructive. Here, a nonnegative solution is a nonnegative local martingale, hence
absorbed at $0$, and uniqueness in law holds for every $\gamma>0$, whereas
without the sign constraint it already fails for $\gamma<1/2$.

Since the coefficients in \eqref{eq:model_SPDE} are time-independent, every
time-shifted equation coincides with the original one, and Assumption
\ref{ass:shifted_uniqueness} holds automatically. Our results are therefore
unconditional in these cases; see Corollary~\ref{cor:model_case}.
\end{remark}

\begin{corollary}\label{cor:model_case}
Suppose $L=\partial_x^2$  and $\sigma(u)=u^\gamma\mathbf 1_{\{u\ge0\}}$ for $\gamma\in[1/2,1)$. 
\begin{enumerate}[\rm (i)]
\item
If $u_0$ satisfies the regularity and moment assumptions of Theorem \ref{thm:CSP_regular_initial}, with $\lambda=2\gamma$, then every nonnegative solution $u\in C([0,T];C_{\rm tem})$ has compact support for every $t\in (0,T]$ almost surely.
% \[
%     \bP\left( \text{$u(t)$ has compact support for every $t\in(0,T]$}
%     \right)=1.
% \]

\item
If $\mu_0$ satisfies the regularity and moment assumptions of
Theorem \ref{thm:CSP_measure_initial}, with $\lambda=2\gamma$, then every nonnegative solution $u\in C((0,T];C_{\rm tem})$ with initial measure $\mu_0$ has compact support for every $t\in (0,T]$ almost surely.
% \[
%     \bP\left( \text{$u(t)$ has compact support for every $t\in(0,T]$}
%     \right)=1.
% \]
\end{enumerate}
\end{corollary}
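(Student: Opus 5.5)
The corollary is a direct application of Theorems \ref{thm:CSP_regular_initial} and \ref{thm:CSP_measure_initial}, so the plan is to verify their hypotheses in the model case $L=\partial_x^2$, $\sigma(u)=u^\gamma\mathbf 1_{\{u\ge0\}}$ with $\gamma\in[1/2,1)$.

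\emph{Step 1: Assumption \ref{asp:sigma}.} Continuity of $\sigma$ and (i) are immediate. For (ii), take $\bc_1=1$, since $u^\gamma\le u^\gamma+u$. For (iii), use the Example: when $\gamma=1/2$ we have $\sigma^2(u)-\sigma^2(v)=u-v$, which gives $\lambda=1=2\gamma$. When $\gamma\in(1/2,1)$, the inequality $(u-v)^{2\gamma}\le u^{2\gamma}-v^{2\gamma}$ gives $\lambda=2\gamma\in(1,2)$ with $\bc_2=1$ for every $K$. To justify that inequality, fix $d=u-v$ and note that $v\mapsto (v+d)^{2\gamma}-v^{2\gamma}$ is nondecreasing because $2\gamma\ge1$; evaluating at $v=0$ gives the claim. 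So the choice $\lambda=2\gamma$ in the corollary is consistent with the hypotheses of both theorems, including the case split (i)/(ii) on $\lambda$.

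\emph{Step 2: Assumption \ref{asp:coefficients}.} Take $a\equiv1$ and $b=c\equiv0$. These are deterministic constants, hence predictable, continuous, and an element of $\cC$. Ellipticity holds with $\nu=1$, and the $C^k$ bound is $\|a\|_{C^2}=1\le\nu$.

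\emph{Step 3: uniqueness in law.} This is the only nontrivial input. For $\gamma=1/2$, the nonnegative solution is the density of one-dimensional super-Brownian motion, and its law is determined by the Laplace functional/log-Laplace equation \cite{Per02}. For $\gamma\in(1/2,1)$, uniqueness in law follows from Mytnik's duality \cite{Myt98}. In both cases the law of the solution, conditionally on $u_0$ (respectively on $\mu_0$ or $Z_0$), is determined. Because the coefficients here are nonrandom, fixing the joint law $\mathcal L(u_0,a,b,c)$ just means fixing $\mathcal L(u_0)$. Uniqueness therefore holds for $\mathcal F_0$-measurable random initial data by conditioning on the initial value, i.e.\ by mixing the laws for deterministic data.

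The main point to check is that these uniqueness results apply to the solution classes in the theorems:
\begin{enumerate}
\item[(a)] $C([0,T];C_{\rm tem})$ solutions with function initial data, including non-integrable data. Only $L_1$-type data matter here, since the moment hypotheses force $\E\int u_0<\infty$.
\item[(b)] Finite measure initial data, with the solution continuous only on $(0,T]$. For this I would invoke the remark that Mytnik's argument extends with minor modifications to finite measures. Concretely, the duality identity $\E e^{-\langle u(t),\phi\rangle}=\E e^{-\langle \mu_0, v(t)\rangle}$, resp.\ its $\gamma>1/2$ analogue, involves $u_0$ only through the pairing with a continuous bounded dual function.
\end{enumerate}

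Assumption \ref{ass:shifted_uniqueness} is then automatic. The coefficients are time-independent and $\xi^{(s)}$ is again a white noise relative to $\{\mathcal F_{s+t}\}$. Hence each shifted equation is the same model equation started from the nonnegative $\mathcal F_s$-measurable datum $Z_0$, and uniqueness in law again follows by conditioning on $Z_0$.

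\emph{Step 4: conclusion.} With the regularity and moment assumptions on $u_0$ taken as hypotheses (with $\lambda=2\gamma$), Theorem \ref{thm:CSP_regular_initial} gives (i). Theorem \ref{thm:CSP_measure_initial}(b) gives (ii). The main obstacle is Step 3(b), transferring the known uniqueness results to measure-valued and random initial data; everything else is a routine check.
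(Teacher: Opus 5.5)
Your proposal is correct and takes the same approach as the paper. You check Assumptions \ref{asp:sigma} and \ref{asp:coefficients} with $\lambda=2\gamma$ as in the Example, import uniqueness in law of nonnegative solutions from \cite{Per02} ($\gamma=1/2$) and \cite{Myt98} ($\gamma\in(1/2,1)$), and note that Assumption \ref{ass:shifted_uniqueness} is automatic because the coefficients are time-independent, exactly as in Remark \ref{rmk:uniqueness_discussion}; like the paper, for measure-valued data you ultimately rely on the asserted extension of Mytnik's duality to finite initial measures rather than proving it.
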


%%%%%%%%%%%%%%%%%%%%%%%%%%%%%%%%%%%%%%%%%%%%%%%%

\section{Preliminaries}\label{sec:prelim} 
In this section we introduce the notion of solution, the function spaces, and some basic estimates used throughout the paper. Most of these are standard and we collect them here, with references, for completeness and to fix notation.

\begin{definition}[Continuous function spaces]
For $A\subset\bR$, let $C(A)$ denote the space of real-valued continuous
functions on $A$. We define
\[
    C_{\rm tem}(A)
    :=
    \left\{
        u\in C(A):
        \sup_{x\in A}|u(x)|e^{-a|x|}<\infty
        \text{ for every }a>0
    \right\}.
\]
For $\delta\in(0,1)$, define
\[
C^\delta(A)
:=
\left\{
u\in C(A):
\|u\|_{C^\delta(A)}
:=
\sup_{x\in A}|u(x)|
+
\sup_{\substack{x,y\in A\\x\ne y}}
\frac{|u(x)-u(y)|}{|x-y|^\delta}
<\infty
\right\}.
\] 
For any subsets $A$ and $B$ of Banach spaces, we denote by $C(A;B)$  the space of continuous functions from $A$ to $B$. We omit $A$ (or $B$) if $A$ (or $B$) is $\bR$. We define $C_{c}^{\infty}$ as the space of infinitely differentiable function with compact support.
\end{definition}

Given a filtered probability space carrying a space-time white noise $\xi$ and a coefficient field $(a,b,c)$ as in Section \ref{sec:main_results}, we can now specify what it means for a process $u$ on that space to be a solution.

\begin{definition}[Solution]\label{def:weak_soln}
For a nonnegative $\mathcal F_0$-measurable random initial function $u_0$, we say that
$u$ is a solution of \eqref{eq:SPDE} on $[0, T]$ with initial
function $u_0$ if $u$ is a $C_{\rm tem}(\bR)$-valued
process $u=u(t,\cdot)$, defined on $\Omega\times[0, T]$, such that for every $\varphi\in C_c^\infty(\bR)$:
\begin{enumerate}[(i)]
\item
The process $\int_{\bR}\varphi(x)u(t\wedge T,x)\,\ud x$ is well-defined, $\mathcal F_t$-adapted, and continuous.

\item The process  $\int_{\bR}|\sigma(u(t\wedge T,x))\varphi(x)|^2\,\ud x$ is well-defined, $\mathcal F_t$-adapted, measurable in $(\omega,t)$, and
\[
\int_0^T\int_{\bR}
|\sigma(u(t,x))\varphi(x)|^2\,\ud x\,\ud t
<\infty \quad \text{a.s.}
\]
\item
For all $0<t\le T$, 
\[
(u(t,\cdot),\varphi) = (u_0,\varphi)
+ \int_0^t (u(s,\cdot),( L^*\varphi)(s,\cdot))\,\ud s  
+\int_0^t\int_{\bR}
\sigma(u(s,x))\varphi(x)\xi(\ud s,\ud x) \quad \text{a.s.}
\]
Here $L^*$ is the adjoint operator of $L$ introduced in \eqref{second order operator}, namely
$L^*\varphi = \partial_x^2(a\varphi) -\partial_x(b\varphi)+c\varphi$. 
\end{enumerate}
When the initial state is a nonnegative random Radon measure
$\mu_0$, we say that $u\in C((0,T];C_{\rm tem})$ is a solution of
\eqref{eq:SPDE} with initial measure $\mu_0$ if $u$ satisfies
(i)--(iii) with $(u_0,\varphi)$ replaced by
$\int_{\bR}\varphi\,\ud\mu_0$.
\end{definition}

Such solutions are weak in two respects. Analytically, the equation is required to hold only after testing against $\varphi\in C_c^\infty(\bR)$.
Probabilistically, our results do not fix the filtered probability space or the
white noise. Existence means that such a space can be constructed together with a solution on it, uniqueness in law is understood across all such spaces, and an assertion about every nonnegative solution refers to a solution on any of
them. What is fixed throughout is only the joint law of the initial state and the coefficient field.

\begin{remark}
Throughout the paper, we frequently write the stochastic integral in
Definition~\ref{def:weak_soln}(iii) as an infinite sum of It\^o
integrals:
\begin{equation*}
    \int_0^t\int_{\bR}\sigma(u(s,x))\varphi(x)\,\xi(\ud s,\ud x)
    = \sum_{k=1}^\infty\int_0^t\!\!\int_{\bR}
      \sigma(u(s,x))\varphi(x)e_k(x)\,\ud x\,\ud w_s^k,
\end{equation*}
where $(w^k)_{k\ge1}$ is a family of independent one-dimensional
Brownian motions and $(e_k)_{k\ge1}$ is an orthonormal basis of
$L_2(\bR)$; see \cite{kry1999analytic}.
\end{remark}

\begin{definition}[Bessel potential spaces]
Let $1<p<\infty$ and $\eta\in\bR$. The space $H_p^\eta=H_p^\eta(\bR)$ is the set of all tempered distributions $u$ on $\bR$ satisfying
\[
\|u\|_{H_p^\eta}
:=
\|(1-\Delta)^{\eta/2}u\|_{L_p}
=
\left\|
\mathcal F^{-1}\left[
(1+|\xi|^2)^{\eta/2}\mathcal F(u)(\xi)
\right]
\right\|_{L_p}
<\infty .
\]
Similarly, $H_p^\eta(\ell_2)=H_p^\eta(\bR;\ell_2)$ is the space of
$\ell_2$-valued functions $g=(g^1,g^2,\ldots)$ satisfying
\[
\|g\|_{H_p^\eta(\ell_2)}
:=
\left\|
\left|(1-\Delta)^{\eta/2}g\right|_{\ell_2}
\right\|_{L_p}
<\infty .
\]
For $\eta=0$, we set $H_p^0:=L_p$ and $H_p^0(\ell_2):=L_p(\ell_2)$.
\end{definition}

\begin{remark}[Bessel kernel]
\label{rem:bessel_kernel}
For $\eta>0$ and $u\in C_c^\infty$,
\[
(1-\Delta)^{-\eta/2}u(x)
=
\int_{\bR}R_\eta(x-y)u(y)\,\ud y,
\]
where the Bessel kernel $R_\eta$ satisfies
\[
|R_\eta(x)|
\le
N(\eta)\left(
e^{-|x|/2}\mathbf 1_{\{|x|\ge2\}}
+
A_\eta(x)\mathbf 1_{\{|x|<2\}}
\right),
\]
with
\[
A_\eta(x)
=
\begin{cases}
|x|^{\eta-1}+1+O(|x|^{\eta+1}), & 0<\eta<1,\\
\log(2/|x|)+1+O(|x|^2), & \eta=1,\\
1+O(|x|^{\eta-1}), & \eta>1.
\end{cases}
\]
In particular, for every $1\le r<\infty$,
\[
R_\alpha\in L_r(\bR)
\qquad\text{whenever}\qquad
\alpha>1-\frac1r .
\]
Moreover, for every finite signed Radon measure $\mu$ and every
$\alpha>1-1/p$,
\[
\|R_\alpha*\mu\|_{L_p}
\le
\|R_\alpha\|_{L_p}|\mu|(\bR).
\]
See \cite[Proposition 1.2.5]{grafakos2009modern}.
\end{remark}
The kernel bounds in Remark \ref{rem:bessel_kernel} will be used repeatedly to estimate convolutions against $R_\alpha$. 

We next introduce the spaces of pointwise multipliers on $H_p^\eta$. To estimate, for instance, products $au$ in $H_p^\eta$, we need the multiplier $a$ to carry enough smoothness; the following spaces quantify this, with the required regularity increasing with $|\eta|$. 

\begin{definition}\label{def_pointwise_multiplier}
Fix $\eta\in\bR$ and $\alpha\in[0,1)$ such that $\alpha=0$ if
$\eta\in\bZ$, and $\alpha>0$ if $|\eta|+\alpha$ is not an integer. Define
\[
B^{|\eta|+\alpha}
=
\begin{cases}
B(\bR), & \eta=0,\\
C^{|\eta|-1,1}(\bR), & \eta\text{ is a nonzero integer},\\
C^{|\eta|+\alpha}(\bR), & \text{otherwise},
\end{cases}
\]
and
\[
B^{|\eta|+\alpha}(\ell_2)
=
\begin{cases}
B(\bR;\ell_2), & \eta=0,\\
C^{|\eta|-1,1}(\bR;\ell_2), & \eta\text{ is a nonzero integer},\\
C^{|\eta|+\alpha}(\bR;\ell_2), & \text{otherwise}.
\end{cases}
\]
Here $B(\bR)$ denotes the space of bounded Borel functions,
$C^{|\eta|-1,1}$ denotes the space of $|\eta|-1$ times continuously differentiable
functions whose derivatives of order $|\eta|-1$ are Lipschitz continuous, and
$C^{|\eta|+\alpha}$ denotes the usual H\"older space.
\end{definition}

The following shows some properties of $H_p^\eta$ that will be used frequently throughout this paper.
\begin{lemma}\label{lem:prop_of_bessel_space}
Let $p>1$ and $\eta\in\bR$.
\begin{enumerate}[(i)]
\item 
\label{dense_subset_besseL_potential} 
The space $C_c^\infty$ is dense in $H_p^\eta$. 
\item 
\label{sobolev-embedding} 
Let $\eta-\frac1p=n+\nu$ for some $n=0,1,\ldots$ and $\nu\in(0,1]$. Then, for every
$i\in\{0,1,\ldots,n\}$,
\begin{equation}
    \|D^i u\|_{C(\bR)}
    +
    \|D^n u\|_{\mathcal C^\nu(\bR)}
    \le
    N\|u\|_{H_p^\eta},
\end{equation}
where $N=N(p,\eta)$, and $\mathcal C^\nu$ is the Zygmund space.
\item 
\label{bounded_operator}
For every integer $i\ge0$, the operator $D^i:H_p^\eta\to H_p^{\eta-i}$ is bounded. Moreover, for every $u\in H_p^{\eta+i}$,
\[
    \|D^i u\|_{H_p^\eta}
    \le
    N\|u\|_{H_p^{\eta+i}},
\]
where $N=N(p,\eta,i)$.

\item 
\label{norm_bounded}
Let $p\in(1,\infty)$ and $\mu\le\eta$. If $u\in H_p^{\eta}$, then $u\in H_p^{\mu}$ and
\[ \|u\|_{H_p^{\mu}}\le\|u\|_{H_p^{\eta}}. \]
Moreover, if $q\in(1,\infty)$ satisfies $\eta-\frac{1}{p}=\mu-\frac{1}{q}$, then
\[ \|u\|_{H_q^{\mu}}\le N\|u\|_{H_p^{\eta}}, \]
where $N=N(\eta,\mu,p,q)$.

\item 
\label{multi_ineq}
Let
\[
\varepsilon\in[0,1],
\qquad
p_i\in(1,\infty),
\qquad
\eta_i\in\bR,
\qquad i=0,1,
\]
and suppose
\[
    \eta=\varepsilon\eta_0+(1-\varepsilon)\eta_1,
    \qquad
    \frac1p=\frac{\varepsilon}{p_0}+\frac{1-\varepsilon}{p_1}.
\]
Then 
\[
    \|u\|_{H_p^\eta}
    \le
    \|u\|_{H_{p_0}^{\eta_0}}^\varepsilon
    \|u\|_{H_{p_1}^{\eta_1}}^{1-\varepsilon}.
\]
In particular, for every $\delta>0$,
\[
    \|u\|_{H_p^\eta}
    \le
    \delta\|u\|_{H_{p_0}^{\eta_0}}
    +
    N(\delta)\|u\|_{H_{p_1}^{\eta_1}} .
\]

\item \label{pointwise_multiplier} If $u\in H_p^\eta$, $a\in B^{|\eta|+\alpha}$ and $b\in B^{|\eta|+\alpha}(\ell_2)$, then
\[
    \|au\|_{H_p^\eta}
    \le
    N\|a\|_{B^{|\eta|+\alpha}}\|u\|_{H_p^\eta},
\]
and
\[
    \|bu\|_{H_p^\eta(\ell_2)}
    \le
    N\|b\|_{B^{|\eta|+\alpha}(\ell_2)}\|u\|_{H_p^\eta},
\] 
where $N=N(\alpha,\eta,p)$. 
\item
\label{convolution}
Let $\Theta\in C_c^\infty$ be nonnegative with
$\int_{\bR}\Theta(x)\,\ud x=1$, and set $\Theta_k(x):=k\Theta(kx)$ for
$k\ge1$. Then, for every $u\in H_p^\eta$,
\[
    \|u*\Theta_k\|_{H_p^\eta}
    \le
    \|u\|_{H_p^\eta}.
\]

\end{enumerate}
\end{lemma}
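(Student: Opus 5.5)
This lemma collects standard facts about Bessel potential spaces. The plan is to derive each item from the Fourier-multiplier definition $\|u\|_{H_p^\eta}=\|(1-\Delta)^{\eta/2}u\|_{L_p}$ together with classical harmonic analysis, citing Triebel and Krylov for the pieces that are genuinely deep.

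\textbf{Items (i), (iii), (iv), (vii).}
\begin{itemize}
\item For (i), the map $(1-\Delta)^{\eta/2}$ is an isometric isomorphism from $H_p^\eta$ onto $L_p$ that maps the Schwartz class onto itself. Since $\mathcal S$ is dense in $L_p$, it is dense in $H_p^\eta$. It then suffices to approximate Schwartz functions in $H_p^\eta$ by functions of the form $\chi(\cdot/R)f$. This is done by checking convergence in $H_p^k$ for an integer $k\ge\eta$, where it follows from the Leibniz rule, and using the embedding $H_p^k\hookrightarrow H_p^\eta$.
\item For (iii), note that $D^i(1-\Delta)^{-i/2}$ is a Fourier multiplier with symbol $(i\xi)^i(1+\xi^2)^{-i/2}$. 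This symbol satisfies the Mikhlin condition, so it is bounded on $L_p$, and it commutes with $(1-\Delta)^{\eta/2}$.
\item For the first part of (iv), write $(1-\Delta)^{(\mu-\eta)/2}$ as convolution with the Bessel kernel $R_{\eta-\mu}$. This kernel is a positive function with integral $1$, so Young's inequality gives the bound with constant $1$.
\item The Sobolev part of (iv) is the Hardy--Littlewood--Sobolev inequality applied to the Bessel kernel. Near the origin it is bounded by $|x|^{\eta-\mu-1}$, and it decays exponentially at infinity.
\item For (vii), convolution with $\Theta_k$ commutes with $(1-\Delta)^{\eta/2}$, and $\|\Theta_k\|_{L_1}=1$, so Young's inequality gives the bound.
\end{itemize}

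\textbf{Items (ii) and (v).}
\begin{itemize}
\item For (ii), use the identification $H_p^\eta=F^\eta_{p,2}\hookrightarrow B^\eta_{p,\infty}\hookrightarrow B^{\eta-1/p}_{\infty,\infty}=\mathcal C^{\eta-1/p}$. The Zygmund norm of $D^n u$ then controls the sup norms of the lower derivatives. When $\nu<1$ the Zygmund space coincides with the Hölder space.
\item Item (v) is complex interpolation, $[H_{p_0}^{\eta_0},H_{p_1}^{\eta_1}]_{1-\varepsilon}=H_p^\eta$ with constant $1$. This follows from Stein's interpolation theorem applied to the analytic family $(1-\Delta)^{z/2}$, using that imaginary powers $(1-\Delta)^{is}$ are bounded on $L_p$ with at most polynomial growth in $s$.
\item The $\delta$-form of (v) is Young's inequality $xy\le\delta x^{1/\varepsilon}+N y^{1/(1-\varepsilon)}$. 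One applies it after homogenizing, or, when $\varepsilon\in(0,1)$, in the form $A^\varepsilon B^{1-\varepsilon}\le\delta A+N(\delta)B$.
\end{itemize}

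\textbf{Item (vi).} This is the main obstacle: the pointwise multiplier theorem with sharp Hölder regularity, including negative $\eta$ and $\ell_2$-valued multipliers. For $\eta$ a nonnegative integer it follows directly from the Leibniz rule. For negative $\eta$, argue by duality: multiplication by $a$ is self-adjoint, and $(H_p^{\eta})^*=H_{p'}^{-\eta}$. For non-integer positive $\eta$, one interpolates between neighboring integers only if the multiplier has $C^{\lceil\eta\rceil}$ regularity. To use merely $C^{|\eta|+\alpha}$, one needs paraproduct or commutator estimates. Rather than reprove this, I would cite Krylov's analytic approach, which proves exactly this statement including the $\ell_2$-valued version (the $\ell_2$ case being handled by the same argument with Hilbert-space-valued multipliers).
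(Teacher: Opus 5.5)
Your arguments for (i)--(iv) and (vii) are the standard ones and are fine. They make self-contained what the paper simply cites from \cite{krylov2008lectures}. For (vi) you end up at the same reference as the paper, \cite[Lemma 5.2]{kry1999analytic}. One small slip in (ii): what bounds $\|D^iu\|_{C(\bR)}$ for $i<n$ is the $\mathcal C^{n+\nu}$-norm of $u$ that your embedding supplies, not the Zygmund norm of $D^nu$ alone.

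\textbf{The genuine gap is the constant $1$ in (v).} The identification $[H^{\eta_0}_{p_0},H^{\eta_1}_{p_1}]_{1-\varepsilon}=H^\eta_p$ rests on the $L_{p_j}$-bounds for the imaginary powers $(1-\Delta)^{is}$, and it holds only with equivalent norms. Accordingly, Stein's theorem returns
$\|u\|_{H^\eta_p}\le N\|u\|^{\varepsilon}_{H^{\eta_0}_{p_0}}\|u\|^{1-\varepsilon}_{H^{\eta_1}_{p_1}}$, where $N=N(\eta_0,\eta_1,p_0,p_1,\varepsilon)$ is determined by those bounds, not $N=1$.

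No argument can do better in general, so (v) as stated is false. Here is a counterexample.
\begin{itemize}
\item Take $p_0=p_1=p$, $\eta_0=2$, $\eta_1=0$, $\varepsilon=1/2$. The claim then reads $\|(1-\Delta)^{1/2}u\|_{L_p}^2\le\|(1-\Delta)u\|_{L_p}\|u\|_{L_p}$.
\item Insert $u=g(\lambda\,\cdot)$, divide by $\lambda^{2-2/p}$ and let $\lambda\to\infty$. The inequality becomes $\||D|g\|_{L_p}^2\le\|g''\|_{L_p}\|g\|_{L_p}$, where $|D|$ has symbol $|\xi|$.
\item For the tent $g=(1-|x|)_+$, $|D|g$ is the Hilbert transform of $g'$, namely $\frac1\pi\log\frac{|x^2-1|}{x^2}$.
\item Hence $\||D|g\|_{L_1}=\frac4\pi\int_0^{1/\sqrt2}\log\frac{1-x^2}{x^2}\,\ud x\approx 2.24$, whereas $(\|g\|_{L_1}\,|g''|(\bR))^{1/2}=2$.
\item Mollifying $g$ and taking $p$ close to $1$ turns this into a counterexample with $p\in(1,\infty)$.
\end{itemize}

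The constant $1$ is correct in two cases: $p_0=p_1=2$ (Plancherel plus H\"older in $\xi$), and $\eta_0=\eta_1$ (H\"older in $x$). So you should prove (v) with a constant $N$, and treat the case $p_0=p_1=2$ separately with constant $1$.

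This costs the paper nothing. The sharp multiplicative form is used only with $p=2$, in Lemma~\ref{lem:uniform_localized_sobolev}. Elsewhere only the $\delta$-form is used, and your Young-inequality step still yields it for $\varepsilon\in(0,1)$ after absorbing $N$.
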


\begin{proof}
These results are standard. For \eqref{dense_subset_besseL_potential}--\eqref{multi_ineq}, 
see \cite[Theorem 13.3.7(i), Theorem 13.8.1, Theorem 13.3.10, Corollary 13.3.9, Theorem 13.8.7,
Theorem 13.3.7(ii), Exercise 13.3.20]{krylov2008lectures}, respectively.
For \eqref{pointwise_multiplier} and \eqref{convolution}, see \cite[Lemma 5.2]{kry1999analytic}.
\end{proof}

We now introduce the stochastic Banach spaces, which are  counterparts of the Bessel potential spaces, incorporating the temporal and probabilistic variables.
\begin{definition}[Stochastic Banach spaces]
For a bounded stopping time $\tau\le T$, set
\[
    \opar0,\tau\cbrk
    :=
    \{(\omega,t):0<t\le \tau(\omega)\}.
\]
\begin{enumerate}[(i)]
\item
\label{def:Hp,Up}
For $\tau\le T$, define
\begin{gather*}
\bH_{p}^{\eta}(\tau) := L_p(\opar0,\tau\cbrk, \mathcal{P}, \ud\P \times \ud t ; H_{p}^\eta),\\
\bH_{p}^{\eta}(\tau,\ell_2) := L_p(\opar0,\tau\cbrk,\mathcal{P}, \ud\P \times \ud t;H_{p}^\eta(\ell_2)),\\
U_{p}^{\eta} :=  L_p(\Omega,\cF_0, \ud\P ; H_{p}^{\eta-2/p}).
\end{gather*}  
For convenience, we write $\mathbb L_p(\tau):=\mathbb H_p^0(\tau)$ and  $\mathbb L_p(\tau,\ell_2):=\mathbb H_p^0(\tau,\ell_2).$ 

\item
The norm of each space is defined in the natural way. For example,
\begin{equation}
\label{norm}
    \|u\|_{\mathbb H_p^\eta(\tau)}^p
    :=
    \E\int_0^\tau
    \|u(t)\|_{H_p^\eta}^p\,\ud t.
\end{equation}
\end{enumerate} For another bounded stopping time $\bar{\tau}\le \tau$, we write $\bH_{p}^{\eta}( \bar{\tau},\tau)$ for the corresponding space on the shifted time interval $\opar \bar{\tau},\tau\cbrk$.
\end{definition}

The following elementary estimate controls the white-noise term of \eqref{eq:SPDE} in the $\ell_2$-valued Bessel potential spaces, allowing Krylov's $L_p$-theory to be applied to the stochastic integral. 

\begin{lemma}\label{lem:white_noise_estimate}
Let $(e_k)_{k\ge1}$ be an orthonormal basis of $L_2(\bR)$. Let $p\ge2$ and $0<\eta<1/2$. Then, for every measurable
$f:\bR\to\bR$ with $f\in L_p(\bR)$,
\[
    \|f\mathbf e\|_{H_p^{\eta-1}(\ell_2)}
    \le
    N\|f\|_{L_p},
\]
where
\[
    f\mathbf e:=(fe_1,fe_2,\ldots).
\]
Consequently, for every bounded stopping time $\tau\le T$ and every
predictable process $f\in\mathbb L_p(\tau)$,
\[
    \|f\mathbf e\|_{\mathbb H_p^{\eta-1}(\tau,\ell_2)}
    \le
    N\|f\|_{\mathbb L_p(\tau)}.
\]
\end{lemma}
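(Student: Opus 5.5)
The first step is to identify the $\ell_2$-valued norm with a convolution against a Bessel kernel. By definition,
\[
\|f\mathbf e\|_{H_p^{\eta-1}(\ell_2)}^p=\int_{\bR}\Bigl(\sum_k |(1-\Delta)^{(\eta-1)/2}(fe_k)(x)|^2\Bigr)^{p/2}\ud x .
\]
Since $1-\eta>0$, Remark~\ref{rem:bessel_kernel} gives
\[
(1-\Delta)^{(\eta-1)/2}(fe_k)(x)=\int_{\bR}R_{1-\eta}(x-y)f(y)e_k(y)\,\ud y .
\]
For fixed $x$, this is the $L_2(\bR)$ inner product of $e_k$ with $g_x(y):=R_{1-\eta}(x-y)f(y)$. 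If $f$ is a nice function (say $f\in C_c^\infty$), Parseval's identity then yields
\[
\sum_k |(g_x,e_k)|^2=\|g_x\|_{L_2}^2=\int_{\bR} R_{1-\eta}(x-y)^2|f(y)|^2\,\ud y=(R_{1-\eta}^2*|f|^2)(x).
\]
So the left-hand side equals $\bigl\|R_{1-\eta}^2*|f|^2\bigr\|_{L_{p/2}}^{1/2}$.

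The second step is Young's inequality, which requires $R_{1-\eta}^2\in L_1$, that is, $R_{1-\eta}\in L_2$. By Remark~\ref{rem:bessel_kernel}, $R_\alpha\in L_2$ whenever $\alpha>1-\tfrac12=\tfrac12$. With $\alpha=1-\eta$, this is exactly the assumption $\eta<1/2$. The kernel bound $|R_{1-\eta}(x)|\lesssim |x|^{-\eta}$ near the origin, with exponential decay at infinity, confirms square integrability. For $p\ge2$ we have $p/2\ge1$, so Young's inequality gives
\[
\bigl\|R_{1-\eta}^2*|f|^2\bigr\|_{L_{p/2}}\le \|R_{1-\eta}\|_{L_2}^2\,\||f|^2\|_{L_{p/2}}=N\|f\|_{L_p}^2 .
\]
Taking square roots gives the first estimate with $N=\|R_{1-\eta}\|_{L_2}$. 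This constant depends only on $\eta$, and in particular not on the choice of basis.

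The third step extends the estimate to general $f\in L_p$. Only the identification of $(1-\Delta)^{(\eta-1)/2}(fe_k)$ with the convolution needs care, since $fe_k$ need not be a tempered distribution of the right class a priori. I would first prove the bound for $f\in C_c^\infty$, where $fe_k\in L_2$ and the Fourier multiplier representation via $R_{1-\eta}$ is legitimate. I would then argue by density in $L_p$: the map $f\mapsto f\mathbf e$ is linear, so the inequality extends to a bounded operator $L_p\to H_p^{\eta-1}(\ell_2)$. This extension agrees componentwise with $fe_k$, because $fe_k\in L_1^{\rm loc}$ and convergence in $H_p^{\eta-1}$ implies distributional convergence. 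This extension is the only mildly delicate point; the rest is Parseval and Young.

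Finally, the stochastic statement follows by applying the deterministic bound pointwise in $(\omega,t)$, raising it to the $p$-th power, and integrating over $\opar0,\tau\cbrk$ with respect to $\ud\P\times\ud t$. Predictability of $f\mathbf e$ is inherited from that of $f$, since each component is a fixed linear operation applied to $f$.
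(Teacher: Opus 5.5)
Your proposal is correct and takes essentially the same approach as the paper's proof. Both reduce to $f\in C_c^\infty$ by density, use the Bessel-kernel representation together with Parseval to get $\|f\mathbf e\|_{H_p^{\eta-1}(\ell_2)}=\bigl\|R_{1-\eta}^2*|f|^2\bigr\|_{L_{p/2}}^{1/2}$, apply Young's inequality with $R_{1-\eta}\in L_2$ (which holds exactly when $\eta<1/2$), and integrate over $\opar0,\tau\cbrk$ for the stochastic bound; your extra care in identifying the density extension componentwise with $fe_k$, and in checking measurability of $f\mathbf e$, only fills in details the paper leaves implicit.
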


\begin{proof}
By density it suffices to treat $f\in C_c^\infty(\bR)$. By the Bessel kernel
representation and Parseval's identity,
\[
    \|f\mathbf e\|_{H_p^{\eta-1}(\ell_2)}^p
    =
    \int_{\bR}
    \left(
        \int_{\bR}R_{1-\eta}(x-y)^2|f(y)|^2\,\ud y
    \right)^{p/2}\ud x
    =
    \bigl\|R_{1-\eta}^2*|f|^2\bigr\|_{L_{p/2}}^{p/2}.
\]
Since $R_{1-\eta}\in L_2(\bR)$ for $\eta<1/2$, Young's inequality yields
$\bigl\|R_{1-\eta}^2*|f|^2\bigr\|_{L_{p/2}}\le\|R_{1-\eta}\|_{L_2}^2\|f\|_{L_p}^2$,
which gives the first estimate. The second estimate follows by taking the
$L_p(\opar0,\tau\cbrk)$-norm.
\end{proof}

We introduce the solution space $\mathcal H_p^\eta(\tau)$ below.  It is the natural space in which Krylov's theory below is formulated.
\begin{definition}
\label{def of cH}
Let $\tau\le T$ be a bounded stopping time and $u\in\mathbb H_p^\eta(\tau)$.
\begin{enumerate}[(i)]
\item 
We write $u\in\cH^{\eta}_p(\tau)$ if $u_0\in U_{p}^{\eta}$ and there exists $(f,g)\in
\bH_{p}^{\eta-2}(\tau)\times\bH_{p}^{\eta-1}(\tau,\ell_2)$ such that
\begin{equation*}
\ud u = f\ud t+\sum_{k=1}^{\infty} g^k \ud w_t^k,\quad   t\in (0, \tau]\,; \quad u(0,\cdot) = u_0
\end{equation*}
in the sense of distributions, i.e., for any $\varphi\in C_c^\infty$, the equality
\begin{equation} \label{def_of_soL_2}
(u(t,\cdot),\varphi) = (u_0,\varphi) + \int_0^t(f(s,\cdot),\varphi)\ud s + \sum_{k=1}^{\infty} \int_0^t(g^k(s,\cdot),\varphi)\ud w_s^k
\end{equation}
holds for all $t\in [0,\tau]$ almost surely. Here, $(w^k)_{k\geq 1}$ is a family of independent one-dimensional Brownian motions. In this case, we write
\begin{equation*}
\bD u:= f,\quad \bS u:=g.
\end{equation*}
\item
The norm of $\cH_{p}^{\eta}(\tau)$ is defined by
\begin{equation*}
\| u \|_{\cH_{p}^{\eta}(\tau)} :=  \| u \|_{\bH_{p}^{\eta}(\tau)} + \| \bD u \|_{\bH_{p}^{\eta-2}(\tau)} + \| \bS u \|_{\bH_{p}^{\eta-1}(\tau,\ell_2)} + \| u_0 \|_{U_{p}^{\eta}}.
\end{equation*}

\end{enumerate} For another bounded stopping time $\bar{\tau}\le \tau$, we write $\cH_{p}^{\eta}( \bar{\tau},\tau)$ for the corresponding space on the shifted time interval $\opar \bar{\tau},\tau\cbrk$.
\end{definition}

Next, we introduce the H\"older embedding theorem for $\cH_p^\eta(\tau)$, which will be used repeatedly to turn $\mathcal H_p^\eta(\tau)$-bounds into pointwise supremum and H\"older estimates for the solution.

\begin{theorem}
\label{thm:embedding_theorems}
Let $\tau\le T$ be a bounded stopping time. 
\begin{enumerate}[(i)]
\item
\label{sol space is banach}
For every $p\ge2$, $\mathcal H_p^\eta(\tau)$ is a Banach space with the norm $\|\cdot\|_{\mathcal H_p^\eta(\tau)}$.
\item  \label{embedding in time}
If $\eta\in\R$, $p>2$, and $1/p<\ba<\bb<1/2$, then for every $u\in\cH_p^\eta(\tau)$, we have 
\[ u\in C^{\ba-1/p}([0,\tau];H_p^{\eta-2\bb}) \quad a.s.\]
and
\begin{equation}
\label{holder_embedding}
\E\| u \|_{C^{\ba-1/p}([0,\tau];H_p^{\eta-2\bb})}^p \leq N(\ba,\bb,p,T)\| u \|_{\cH_p^\eta(\tau)}^p.
\end{equation}
\item
\label{embedding in time 2}
If $p=2$, then \eqref{holder_embedding} holds with
$\ba=\bb=1/2$, i.e., $u\in C([0,\tau];H_2^{\eta-1})$ a.s. 
and
\begin{equation*}
% \label{holder_embedding_2}
    \E\sup_{t\le\tau}
    \|u(t)\|_{H_2^{\eta-1}}^2
    \le
    N(T)
    \|u\|_{\mathcal H_2^\eta(\tau)}^2 .
\end{equation*}

\item  \label{embedding in time and space}
If $\eta>0$, $p>2$, $\ba,\bb\in(0,\infty)$ satisfy
\begin{equation*}
\frac{1}{p} < \ba < \bb < \frac{1}{2}\left( \eta - \frac{1}{p} \right),
\end{equation*}
then for every $u\in\cH_p^\eta(\tau)$, we have $u\in C^{\ba-1/p}([0,\tau];C^{\eta-2\bb-1/p})$ a.s. and 
\begin{equation*}
\E\| u \|_{C^{\ba-1/p}([0,\tau];C^{\eta-2\bb-1/p})}^p \leq N(\ba,\bb,p,T)\| u \|_{\cH_p^\eta(\tau)}^p.
\end{equation*}

\end{enumerate}
\end{theorem}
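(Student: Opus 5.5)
The statement to be proved is Theorem \ref{thm:embedding_theorems}, the standard embedding theorem for Krylov's stochastic Banach spaces. The plan is to reduce everything to the deterministic case via Krylov's representation and then use Kolmogorov-type and Sobolev embeddings.

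\emph{Part (i).} Completeness follows once we show that if $u_n$ is Cauchy in $\mathcal H_p^\eta(\tau)$, then the limits $u$, $f$, $g$, $u_0$ in the respective complete spaces $\mathbb H_p^\eta(\tau)$, $\mathbb H_p^{\eta-2}(\tau)$, $\mathbb H_p^{\eta-1}(\tau,\ell_2)$, $U_p^\eta$ still satisfy \eqref{def_of_soL_2}. To check this, we test with $\varphi\in C_c^\infty$. The deterministic integral converges in $L_p(\Omega)$ by H\"older's inequality. The stochastic integral converges by the Burkholder--Davis--Gundy inequality, since $|(g_n-g,\varphi)|_{\ell_2}\le \|g_n-g\|_{H_p^{\eta-1}(\ell_2)}\|\varphi\|_{H_{p'}^{1-\eta}}$. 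Passing to a subsequence then gives \eqref{def_of_soL_2} for the limit, for all $t$ almost surely.

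\emph{Parts (ii) and (iii).} Apply $(1-\Delta)^{(\eta-1)/2}$ to reduce to $\eta=1$; this is an isometry of all the relevant spaces. For (iii) with $p=2$, Itô's formula for $\|u(t)\|_{L_2}^2$ in the Hilbert triple $H_2^1\subset L_2\subset H_2^{-1}$, combined with the BDG inequality, gives
\[
\E\sup_{t\le\tau}\|u(t)\|_{L_2}^2\le N\|u\|_{\mathcal H_2^1(\tau)}^2 .
\]
For (ii), write $u$ as the sum of the deterministic part $u_0+\int_0^t f\,\ud s$ and the martingale part $\sum_k\int_0^t g^k\,\ud w^k_s$.
\begin{enumerate}[(1)]
\item For the deterministic part, use the interpolation of Lemma \ref{lem:prop_of_bessel_space}\eqref{multi_ineq} between $H_p^{\eta}$ and $H_p^{\eta-2}$. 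Together with the fact that $\int_s^t f$ gains time regularity in $H_p^{\eta-2}$, this gives a H\"older increment estimate $\E\|u(t)-u(s)\|^p_{H_p^{\eta-2\bb}}\le N|t-s|^{\bb p}\cdots$.
\item For the martingale part, the BDG inequality in $L_p$ together with Minkowski's inequality gives an increment estimate of order $|t-s|^{\bb p}$ with the same interpolation index.
\end{enumerate}

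The main obstacle is that (1) and (2) do not directly control the supremum. The cleanest route is Krylov's: represent $u$ through the heat semigroup, write $u=T_t u_0+\int_0^t T_{t-s}(f+\Delta u)\,\ud s+\int_0^t T_{t-s}g\,\ud w$, and apply the factorization method (Da Prato--Kwapie\'n--Zabczyk) with exponent $\ba$. This yields $C^{\ba-1/p}$ regularity via the Garsia--Rodemich--Rumsey/Kolmogorov lemma, with the condition $1/p<\ba<\bb<1/2$ entering exactly in the factorization integrals. The initial term is handled by $u_0\in H_p^{\eta-2/p}$ and the semigroup smoothing estimate $\|T_t u_0\|$ in trace-space form. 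This is in fact \cite[Theorem 7.2]{kry1999analytic}, which I would cite after checking that the stopping-time version follows by extending $u$ past $\tau$ by the trivial solution of the heat equation (zero $f,g$ beyond $\tau$).

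\emph{Part (iv).} Combine (ii) with the Sobolev embedding of Lemma \ref{lem:prop_of_bessel_space}\eqref{sobolev-embedding}: $H_p^{\eta-2\bb}\hookrightarrow C^{\eta-2\bb-1/p}$. This requires $\eta-2\bb-1/p>0$, which is precisely the stated restriction $\bb<\frac12(\eta-1/p)$.
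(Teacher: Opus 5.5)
Your plan is the paper's own proof. The paper proves (i)--(iii) by citing Krylov's Theorems 3.7, 7.1(iii) and 7.2, and proves (iv) by composing (ii) with the Sobolev embedding. Your completeness argument for (i) and your It\^o-formula proof of (iii) after reducing to $\eta=1$ are correct expansions of the cited results.

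\textbf{Gap in (iv), shared by the paper.} In (iv) you check only the Sobolev condition $\eta-2\bb-\frac1p>0$. To invoke (ii) you also need $\bb<\frac12$. The hypothesis $\frac1p<\ba<\bb<\frac12(\eta-\frac1p)$ gives this only when $\eta\le 1+\frac1p$.

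For larger $\eta$ the statement is actually false. Take $\phi\in C_c^\infty$ and $u(t,x)=\phi(x)w^1_t$, i.e.\ $u_0=0$, $f=0$, $g=(\phi,0,0,\dots)$. This $u$ lies in $\cH_p^\eta(T)$ for every $\eta$, and $\|u(t)-u(s)\|_{C^{\eta-2\bb-1/p}}=\|\phi\|_{C^{\eta-2\bb-1/p}}\,|w^1_t-w^1_s|$. Once $\eta>1+\frac3p$ one may choose $\ba\ge\frac12+\frac1p$, for example $p=4$, $\eta=3$, $\ba=\frac{9}{10}$, $\bb=1$. Then (iv) would make Brownian motion H\"older continuous of order $\ba-\frac1p\ge\frac12$, which is false.

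So (iv) needs the extra hypothesis $\bb<\frac12$. It is enough to have $\ba<\frac12$: then replace $\bb$ by some $\bb'\in(\ba,\frac12)$ and use $H_p^{\eta-2\bb'}\subset H_p^{\eta-2\bb}$. The omission is harmless for the paper, which only applies (iv) with $\eta<\frac12$. As stated, however, neither your argument nor the paper's covers the full range.

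\textbf{Your heuristic for (ii).} Steps (1)--(2) fail for a more basic reason than control of the supremum. Neither $u_0+\int_0^t f\,\ud s$ nor $\sum_k\int_0^t g^k\,\ud w^k_s$ need lie in $H_p^\eta$ on its own; they lie only in $H_p^{\eta-2}$ and $H_p^{\eta-1}$. The $H_p^\eta$-regularity belongs to their sum, and only for a.e.\ $t$. So interpolating between $H_p^\eta$ and $H_p^{\eta-2}$ cannot give even the fixed-$(s,t)$ increment bounds, which is why one must pass to the mild form. That form should read $u(t)=T_tu_0+\int_0^tT_{t-s}(f-\Delta u)\,\ud s+\int_0^tT_{t-s}g\,\ud w_s$, with $f-\Delta u$ rather than $f+\Delta u$. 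Since you ultimately cite Krylov's Theorem 7.2, as the paper does, this does not affect your conclusion for (ii).
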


\begin{proof}
For \eqref{sol space is banach}--\eqref{embedding in time 2}, see
\cite[Theorems 3.7, 7.1(iii) and 7.2]{kry1999analytic}. For
\eqref{embedding in time and space}, combine
\eqref{holder_embedding} with the Sobolev embedding Lemma~\ref{lem:prop_of_bessel_space}\eqref{sobolev-embedding}. Indeed,
\[{}
\begin{aligned}
\E
\|u\|_{C^{\mathfrak a-1/p}([0,\tau];C^{\eta-2\mathfrak b-1/p})}^p
&\le
N
\E
\|u\|_{C^{\mathfrak a-1/p}([0,\tau];H_p^{\eta-2\mathfrak b})}^p  \le
N\|u\|_{\mathcal H_p^\eta(\tau)}^p .
\end{aligned}
\]
\end{proof}

We conclude this section with Krylov's $L_p$-estimate \cite[Theorem~5.1]{kry1999analytic}, which provides the basic solvability and a priori bounds for linear SPDEs in the spaces $\mathcal H_p^\eta(\tau)$.

\begin{theorem}[Krylov's $L_p$-estimate, Theorem~5.1 of \cite{kry1999analytic}]
\label{thm:krylov_lp_estimate}
Let $p\ge2$, $\eta\in\bR$, and let $\tau\le T$ be a bounded stopping time.
Assume that the coefficients of $L$ satisfy
Assumption \ref{asp:coefficients}. Suppose
\[
    u_0\in U_p^\eta,
    \qquad
    f\in\mathbb H_p^{\eta-2}(\tau),
    \qquad
    g\in\mathbb H_p^{\eta-1}(\tau,\ell_2).
\]
If $u\in\mathcal H_p^\eta(\tau)$ satisfies
\begin{equation}\label{eq:SPDE1}
    \ud u=(L u+f)\,\ud t+\sum_{k=1}^{\infty}g^k\,\ud w_t^k,
    \qquad
    u(0)=u_0,
\end{equation}
then
\[
    \|u\|_{\mathcal H_p^\eta(\tau)}
    \le
    N\left(
        \|u_0\|_{U_p^\eta}
        +
        \|f\|_{\mathbb H_p^{\eta-2}(\tau)}
        +
        \|g\|_{\mathbb H_p^{\eta-1}(\tau,\ell_2)}
    \right),
\]
where $N=N(p,\eta,\nu,T)$. 
% In particular, for $p=2$, \violet{JY: Why do we write it for $p=2$?}
% \[
%     \|u\|_{\mathcal H_2^\eta(\tau)}^2
%     \le
%     N\left(
%         \|u_0\|_{U_2^\eta}^2
%         +
%         \|f\|_{\mathbb H_2^{\eta-2}(\tau)}^2
%         +
%         \|g\|_{\mathbb H_2^{\eta-1}(\tau,\ell_2)}^2
%     \right).
% \]
Moreover, under the same assumptions, the equation \eqref{eq:SPDE1} has a unique solution
$u\in\mathcal H_p^\eta(\tau)$.
\end{theorem}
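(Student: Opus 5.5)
This estimate is quoted from \cite[Theorem~5.1]{kry1999analytic}, so the plan is to reconstruct Krylov's argument rather than find a new route. The overall strategy is the method of continuity, starting from the model operator $\Delta$, with the a priori estimate as the main ingredient. First, for $L=\partial_x^2$ (more generally $a(t)\partial_x^2$ with $a$ depending only on $(\omega,t)$), I would write $u$ explicitly. It is $u=T_tu_0+\int_0^t T_{t-s}f\,\ud s+\sum_k\int_0^tT_{t-s}g^k\,\ud w^k_s$, where $T$ is the associated heat semigroup. The deterministic part is handled by classical parabolic $L_p$ estimates. The stochastic part is handled by Krylov's Littlewood--Paley inequality for stochastic convolutions, which is the deep input. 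Since $(1-\Delta)^{\eta/2}$ commutes with these operators, it suffices to treat $\eta=0$ (or $\eta=1$), and general $\eta$ follows by applying $(1-\Delta)^{\eta/2}$. The initial datum is absorbed by subtracting $T_tu_0$, using the trace characterization $U^\eta_p=L_p(\Omega;H_p^{\eta-2/p})$.

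Second, I would pass to variable leading coefficient $a(t,x)$ by freezing coefficients. Choose a partition of unity $\{\zeta_j\}$ with small supports. On each support, $a$ differs from $a(t,x_j)$ by at most $\varepsilon$, using uniform continuity in $x$, which is uniform because $\|a\|_{C^2}\le\nu$. The function $\zeta_ju$ solves a model equation with coefficient $a(t,x_j)$, plus a perturbation $(a-a(t,x_j))\partial_x^2(\zeta_ju)$ and commutator terms. The commutators involve $\zeta_j'$ and $\zeta_j''$ and are of lower order in $u$, namely in $H_p^{\eta-1}$. The perturbation is small, bounded by $\varepsilon\|\zeta_ju\|_{H^\eta_p}$, via the multiplier bound Lemma~\ref{lem:prop_of_bessel_space}\eqref{pointwise_multiplier}. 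The terms $b\partial_xu$ and $cu$ are likewise lower order by the same lemma. Summing over $j$ (using $\sum_j\|\zeta_jh\|^p_{H^\eta_p}\sim\|h\|^p_{H^\eta_p}$) gives
\[
\|u\|_{\mathcal H^\eta_p(\tau)}\le N\bigl(\|u_0\|_{U^\eta_p}+\|f\|_{\mathbb H^{\eta-2}_p(\tau)}+\|g\|_{\mathbb H^{\eta-1}_p(\tau,\ell_2)}+\|u\|_{\mathbb H^{\eta-1}_p(\tau)}\bigr).
\]
The lower-order term is removed by interpolation, Lemma~\ref{lem:prop_of_bessel_space}\eqref{multi_ineq}, with a small parameter $\delta$. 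Alternatively, I would use the Gronwall-type bound $\|u\|^p_{\mathbb H^{\eta-1}_p(t)}\le N\int_0^t\|u\|^p_{\mathcal H^{\eta}_p(s)}\,\ud s$, which comes from the time-embedding in Theorem~\ref{thm:embedding_theorems}. Working on $\opar0,\tau\cbrk$ with $\tau$ a stopping time causes no difficulty, since all integrands can be extended by zero after $\tau$.

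Finally, existence and uniqueness follow from the a priori estimate by the method of continuity. Consider $L_\theta=(1-\theta)\partial_x^2+\theta L$ for $\theta\in[0,1]$. The family has uniform ellipticity and bounds depending only on $\nu$, so the constant $N$ is uniform in $\theta$. Solvability at $\theta=0$ comes from the explicit formula, and $\mathcal H^\eta_p(\tau)$ is a Banach space by Theorem~\ref{thm:embedding_theorems}\eqref{sol space is banach}. Standard continuity then propagates solvability to $\theta=1$. Uniqueness is immediate from the estimate applied to the difference of two solutions. I expect the main obstacle to be the model-case stochastic estimate, namely the stochastic Littlewood--Paley inequality. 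The freezing and continuity steps are routine bookkeeping, although handling the random, merely predictable coefficients requires care: the frozen-coefficient model must allow $a(t,x_j)$ to depend on $(\omega,t)$. I would handle this by a random time change.
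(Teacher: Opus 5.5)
Your outline follows the strategy of Krylov's proof of \cite[Theorem~5.1]{kry1999analytic}; the paper itself only cites that theorem and gives no argument. The random time change $t\mapsto\int_0^t a(s,x_j)\,\ud s$ is sound: it reduces a frozen $(\omega,t)$-dependent scalar coefficient to $\partial_x^2$, with norms changed only by factors depending on $\nu$.

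The freezing step, however, fails as written. Set $h=(a-a(t,x_j))\tilde\zeta_j$ with $\tilde\zeta_j=1$ near $\supp\zeta_j$. Lemma~\ref{lem:prop_of_bessel_space}\eqref{pointwise_multiplier} bounds the perturbation $h\,\partial_x^2(\zeta_ju)$ in $H^{\eta-2}_p$ only by $N\|h\|_{B^{|\eta-2|+\alpha}}\|\zeta_ju\|_{H^\eta_p}$, and freezing makes only $\sup|h|$ small. For $|\eta-2|\ge1$ this norm dominates the Lipschitz constant of $h$. That includes every $\eta\in[0,1/2)$, the only values the paper uses. The Lipschitz constant is at least $\sup_{\supp\zeta_j}|\partial_xa|$ and does not shrink with the supports, so you do not get $\varepsilon\|\zeta_ju\|_{H^\eta_p}$.

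The missing ingredient is a sharpened multiplier estimate, which comes from a commutator bound for $[(1-\Delta)^{s/2},h]$:
\[
\|hv\|_{H^s_p}\le N\sup|h|\,\|v\|_{H^s_p}+N\|h\|_{B^{|s|+\alpha}}\|v\|_{H^{s-\delta}_p}\qquad(\delta>0).
\]
The last term is then absorbed by interpolation. Alternatively, freeze only at $\eta=2$, where the multiplier space is $B(\bR)$ and sup-norm smallness suffices. You then reach other $\eta$ by applying $(1-\Delta)^{(\eta-2)/2}$ to the equation and treating the commutator with $a\partial_x^2$ as a lower-order free term.

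Both repairs require coefficient smoothness that grows with $|\eta-2|$. So does your claim that $b\partial_xu$ and $cu$ are lower order ``by the same lemma''. Krylov's theorem indeed assumes $a,b,c$ bounded in $B^{|\eta-2|+\gamma}$. Assumption~\ref{asp:coefficients} gives only $a\in C^2$, $b\in C^1$, $c\in C^0$. Hence neither your argument nor the citation yields the estimate for every $\eta\in\bR$. For instance, when $\eta<0$, multiplication by a merely continuous $c$ need not map $H^\eta_p$ into $H^{\eta-2}_p$, so $Lu$ is not even controlled in the required space. You should restrict the claim to a bounded range of $\eta$ compatible with Assumption~\ref{asp:coefficients}; the paper only needs $0\le\eta<1/2$. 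In that range, move $b\partial_xu+cu$ into the free term and handle it as a lower-order perturbation.
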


%%%%%%%%%%%%%%%%%%%%%%%%%%%%%%%%%%%%%%%%%%%%%%%%%%%%%%

\section{The splitting construction and a priori estimates}
\label{sec:splitting_estimates}

In this section we provide several estimates that will be used in the proof of Theorem \ref{thm:CSP_regular_initial}. The main idea of the proof of Theorem \ref{thm:CSP_regular_initial} is to split
the initial function into a compactly supported part and a tail part. The compactly supported part inherits the compact support property from the known case of compactly supported initial data, while the tail part will be shown to become extinct before any fixed positive time with high probability. 

Throughout this section and the next, $T>0$ is fixed, Assumptions \ref{asp:sigma} and
\ref{asp:coefficients} are in force, and $u_0$ is a nonnegative
$\cF_0$-measurable random function satisfying
\eqref{eq:asp_initial_regularity}. Since $p>3/\eta$ and $p>6$, we may lower
$\eta$ and assume throughout that
\[
    0<\eta<\tfrac12\qquad \text{and} \qquad \eta p>3 ,
\]
which is legitimate because $\|u_0\|_{U_p^{\eta'}}\le\|u_0\|_{U_p^{\eta}}$ for
$\eta'\le\eta$ (Lemma \ref{lem:prop_of_bessel_space}\eqref{norm_bounded}). The first condition is needed to apply Lemma \ref{lem:white_noise_estimate} to the noise term, and the second gives, by the Sobolev embedding
(Lemma \ref{lem:prop_of_bessel_space}\eqref{sobolev-embedding}), that $u_0\in C^\delta(\bR)$ almost surely for every $0<\delta<\eta-\frac3p$.

Since the coefficient $\sigma$ is non-Lipschitz, the auxiliary
processes below are constructed as weak solutions on a filtered
probability space that may differ from the original one and that
carries a copy of $(u_0,a,b,c)$ with the same joint law (see
Proposition \ref{prop:existence_vw}). As the assumptions on $(u_0,a,b,c)$---the bounds in Assumption \ref{asp:coefficients} and the regularity and moment conditions on $u_0$---depend only on the law of $(u_0,a,b,c)$, they are inherited by any copy with the same law. We therefore do not distinguish the copy notationally in the sequel.

\subsection{The splitting construction}
\label{subsec:splitting_construction}

Let $\rho\in C_c^\infty(\bR)$ satisfy
\[
    0\le \rho\le 1,\qquad
    \rho(x)=1\ \text{for } |x|\le1,\qquad
    \rho(x)=0\ \text{for } |x|\ge2.
\]
For $n\ge1$, we decompose
\[
    u_0 = u_0\rho(\cdot/n) + u_0(1-\rho(\cdot/n)),
\]
and consider the system
\begin{equation}\label{eq:spde_v_n}
    \begin{aligned}
        \partial_t v_n(t,x)
        &= L v_n(t,x)+\sigma(v_n(t,x))\,\xi_1(t,x),\\
        v_n(0,x) &= u_0(x)\rho(x/n),
    \end{aligned}
\end{equation}
and
\begin{equation}\label{eq:spde_w_n}
    \begin{aligned}
        \partial_t w_n(t,x)
        &= L w_n(t,x)
        + \left(\sigma^2(w_n(t,x)+v_n(t,x))
                -\sigma^2(v_n(t,x))\right)^{1/2}\xi_2(t,x),\\
        w_n(0,x) &= u_0(x)(1-\rho(x/n)),
    \end{aligned}
\end{equation}
where $\xi_1$ and $\xi_2$ are independent space-time white noises. Weak existence for this system is established in the following proposition,
where $\delta\in(0,\eta-\frac3p)$ is fixed as at the beginning of this section. 

\begin{proposition}\label{prop:existence_vw}
For each $n\ge1$, there exists a filtered probability space
$(\bar\Omega,\bar{\mathcal F},\{\bar{\mathcal F}_t\},\bar\bP)$
carrying:
\begin{enumerate}
\item[\rm(i)] an $\bar{\mathcal F}_0$-measurable nonnegative random
function $\bar u_0$ and a coefficient field $(\bar a,\bar b,\bar c)$
satisfying Assumption \ref{asp:coefficients}, with
$\mathcal L(\bar u_0,\bar a,\bar b,\bar c)
 =\mathcal L(u_0,a,b,c)$;
\item[\rm(ii)] two independent space-time white noises
$\xi_1,\xi_2$ relative to $\{\bar{\mathcal F}_t\}$;
\item[\rm(iii)] nonnegative processes
$v_n,w_n\in C([0,T];C_{\rm tem}(\bR))$ solving
\eqref{eq:spde_v_n} and \eqref{eq:spde_w_n} with
$(u_0,a,b,c)$ replaced by $(\bar u_0,\bar a,\bar b,\bar c)$.
\end{enumerate}
Moreover, for every $p\ge2$, $\ep\in(0,\delta/2)$, and $a>0$, there
exists $N=N(a,\ep,p,T)>0$, independent of $n$, such that
\[
    \E\|\Psi_a v_n\|_{C^{\ep}([0,T]\times\bR)}^p
    + \E\|\Psi_a w_n\|_{C^{\ep}([0,T]\times\bR)}^p
    \le N,
    \qquad
    \Psi_a(x):=\zeta(ax)=\frac1{\cosh(ax)}.
\]
\end{proposition}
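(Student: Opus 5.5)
We will construct the pair $(v_n,w_n)$ by the standard regularize--estimate--compactify scheme (Mueller--Perkins, Shiga). First we replace the non-Lipschitz coefficients by Lipschitz approximations: let $\sigma_k$ be Lipschitz, nonnegative, vanishing on $(-\infty,0]$, with $\sigma_k\to\sigma$ locally uniformly and $\sigma_k(u)\le 2\bc_1(u^\gamma+u)$ uniformly in $k$. We set
\[
g_k(v,w):=\bigl((\sigma_k^2(v+w)-\sigma_k^2(v))^+ +k^{-1}\bigr)^{1/2}-k^{-1/2},
\]
or we simply mollify $(v,w)\mapsto(\sigma^2(v+w)-\sigma^2(v))^{1/2}$ in a Lipschitz way while keeping the growth bound $g_k(v,w)\le N(w^\gamma+w+\ldots)$. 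The bound $\sigma^2(v+w)-\sigma^2(v)\le\sigma^2(v+w)$ gives growth of order $(v+w)^\gamma+(v+w)$. On the original space, enlarged if necessary by an independent white noise so that $\xi_1,\xi_2$ are independent, the Lipschitz system has a unique strong solution $(v_n^k,w_n^k)$ by Krylov's theory (Theorem \ref{thm:krylov_lp_estimate} combined with a Picard/fixed-point argument in $\mathcal H_p^\eta$). Nonnegativity of both components comes from the comparison principle, since both noise coefficients vanish when the respective component is $0$ (for $w$ we use $g_k(v,0)=0$). Because $u_0\rho(\cdot/n)\le u_0$ and $u_0(1-\rho(\cdot/n))\le u_0$, the initial data are bounded in $U_p^\eta$ uniformly in $n$.

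Next we prove the uniform weighted Hölder bound. We apply Krylov's estimate to $\Psi_a v_n^k$ and $\Psi_a w_n^k$. Multiplying by $\Psi_a$ produces an equation with the same operator plus lower-order terms $\Psi_a'/\Psi_a$, $\Psi_a''/\Psi_a$, which are bounded by \eqref{eq:zeta_bounds}, and with noise coefficient $\Psi_a\sigma_k(\cdot)\mathbf e$. By Lemma \ref{lem:white_noise_estimate} (with $\eta<1/2$) the noise term is bounded in $\mathbb H_p^{\eta-1}(\ell_2)$ by $\|\Psi_a(v^\gamma+v)\|_{\mathbb L_p}$. The linear part is absorbed by a Gronwall/interpolation argument (Lemma \ref{lem:prop_of_bessel_space}\eqref{multi_ineq}). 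The sublinear part satisfies $\Psi_a v^\gamma\le \Psi_a^{1-\gamma}+\Psi_a v$, and $\Psi_a^{1-\gamma}\in L_p$ because the weight decays exponentially. Thus $\|\Psi_a v_n^k\|_{\mathcal H_p^\eta(T)}\le N(a,p,T)$ uniformly in $k,n$. The same argument works for $w_n^k$, treating $v_n^k$ via its own bound and using $(v+w)^\gamma\le v^\gamma+w^\gamma$. Theorem \ref{thm:embedding_theorems}\eqref{embedding in time and space} then converts this into the claimed $C^\epsilon([0,T]\times\bR)$ moment bound for any $\epsilon<\delta/2$, after choosing $\ba,\bb$ appropriately and possibly enlarging $p$; for small $p$ we use Hölder's inequality.

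Finally we pass to the limit $k\to\infty$. The uniform weighted Hölder bounds for every $a>0$ give tightness of $(v_n^k,w_n^k)$ in $C([0,T];C_{\rm tem})^2$ by Arzelà--Ascoli. Tightness of $(u_0,a,b,c)$ in $\cC$ is trivial because their law does not depend on $k$. By the Skorokhod representation we obtain a.s.\ convergence on a new space $\bar\Omega$ of the tuple together with $(\bar u_0,\bar a,\bar b,\bar c)$; the joint law is preserved. We then identify the limit via the martingale problem. For $\varphi\in C_c^\infty$, the processes $M^\varphi_v(t)=(v(t),\varphi)-(v(0),\varphi)-\int_0^t(v,L^*\varphi)\,\ud s$ and $M^\varphi_w$ are continuous local martingales, with quadratic variations $\int\!\int\sigma^2(v)\varphi^2$ and $\int\!\int(\sigma^2(v+w)-\sigma^2(v))\varphi^2$ and zero cross-variation. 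These identities pass to the limit by local uniform convergence, using the growth bounds and localization by stopping times for uniform integrability. A martingale representation theorem for orthogonal martingale measures (Walsh, or El Karoui--Méléard), applied after enlarging the space, then produces independent white noises $\xi_1,\xi_2$ driving \eqref{eq:spde_v_n}--\eqref{eq:spde_w_n}. The Hölder moment bounds survive the limit by Fatou's lemma.

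The main obstacle is this identification step. The random coefficients must be carried through Skorokhod as part of the converging tuple, and one must check that the limiting filtration makes $\bar u_0,\bar a,\bar b,\bar c$ adapted/predictable while keeping the noises independent of the past. To handle this, we will take the filtration generated by $(\bar u_0,\bar a,\bar b,\bar c, v,w, M_v,M_w)$ and verify the martingale property through $\E[(M(t)-M(s))\Phi(\text{past})]=0$ for bounded continuous functionals $\Phi$, which passes to the limit directly.
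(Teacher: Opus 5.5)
Your argument is sound in outline, but it takes a different route from the paper. The paper never regularizes the coupled system. It takes $v_n$, together with its weighted H\"older moment bound, directly from \cite[Theorem 2.9]{han2023compact}. Then, with $v_n$ fixed, it treats $\widetilde\sigma_n(t,x,z)=(\sigma^2(z+v_n)-\sigma^2(v_n))^{1/2}$ as a predictable random noise coefficient (vanishing at $z=0$, jointly continuous, of growth $N(1+v_n+z)$) and asserts that the same existence proof goes through for $w_n$.

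That is short, but it leaves implicit exactly the point you single out as the main obstacle. A second weak-existence step moves to yet another probability space, so $v_n$, $\xi_1$ and the data must be carried along. One then has to check that $\xi_1$ still drives $v_n$ and is independent of the new $\xi_2$ relative to a common filtration. Your simultaneous regularization settles this cleanly. Independence is encoded in the vanishing cross-variation $\langle M_v^\varphi,M_w^\psi\rangle=0$, which passes to the limit and then yields a two-dimensional white noise via the representation theorem.

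The price is a full tightness/Skorokhod/martingale-problem argument. It is easier than Proposition \ref{prop:existence_measure_initial}, since your bounds hold up to $t=0$. Your weighted treatment of the sublinear term, $\Psi_a v^\gamma\le\Psi_a^{1-\gamma}+\Psi_a v$ with $\Psi_a^{1-\gamma}\in L_p$, is the right way to get $n$-uniform bounds without any mass assumption.

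Three points need fixing; none is serious.

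\emph{First}, the pointwise bound $u_0\rho(\cdot/n)\le u_0$ does not control $H_p^{\eta-2/p}$-norms. Uniformity in $n$ comes instead from Lemma \ref{lem:prop_of_bessel_space}\eqref{pointwise_multiplier}, because $\rho(\cdot/n)$ and $1-\rho(\cdot/n)$ have $C^1$-norms bounded uniformly in $n$. The paper's own remark is equally loose here.

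\emph{Second}, your explicit $g_k$ is only locally Lipschitz. Near $w=0$ its Lipschitz constant in $w$ is of order $\sqrt k\,\sigma_k(v)\sigma_k'(v)$, which is unbounded in $v$ in general. Add a cutoff as in \eqref{def:sigma-n}, or localize.

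\emph{Third}, Theorem \ref{thm:embedding_theorems}\eqref{embedding in time and space} at the given $p$ gives only joint exponents $\min(\ba-1/p,\,\eta-2\bb-1/p)<(\eta-3/p)/3$, not every $\epsilon<\delta/2$. You also cannot enlarge $p$, since $u_0$ is only in $L_p(\Omega;H_p^{\eta-2/p})$. In fact no argument can: for $u_0=X\phi$ with $\phi\in C_c^\infty$ and $\E X^{p'}=\infty$ for some $p'>p$, one has $\E\|\Psi_a v_n\|_{C^\epsilon}^{p'}=\infty$ for large $n$. What you do obtain is the bound for the given $p$ (and smaller exponents) with some $\epsilon>0$, plus continuity. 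That is all that is used afterwards, namely in the finiteness claims around \eqref{eq:drift} and \eqref{eq:diffusion}.
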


\begin{proof}
The equation \eqref{eq:spde_v_n} falls directly under \cite[Theorem 2.9]{han2023compact}, which yields a nonnegative solution $v_n\in C([0,T];C_{\rm tem}(\bR))$ together with the stated moment bound. Note that the constant $N$ does not depend on $n$ since $v_n(0)\le u_0$ uniformly in $n$.

For \eqref{eq:spde_w_n}, after $v_n$ is constructed, we regard
\[
    \widetilde\sigma_n(t,x,z)
    :=
    \left(\sigma^2(z+v_n(t,x))-\sigma^2(v_n(t,x))\right)^{1/2},
    \qquad z\ge0,
\]
as a predictable random coefficient. By Assumption \ref{asp:sigma}, $\widetilde\sigma_n(t,x,0)=0$, $\widetilde\sigma_n$ is jointly continuous in $(t,x,z)$ by the continuity of $v_n$, and the linear growth estimate follows from
\[
    \widetilde\sigma_n(t,x,z)
    \le
    \sigma(z+v_n(t,x))+\sigma(v_n(t,x))
    \le
    N\left(1+v_n(t,x)+z\right).
\]
Together with the uniform moment bound for $\Psi_a v_n$, these properties are all that the existence proof of \cite[Theorem 2.9]{han2023compact} requires of the noise coefficient, so the
same argument applies with $\sigma$ replaced by $\widetilde\sigma_n$ and yields a nonnegative solution $w_n\in C([0,T];C_{\rm tem}(\bR))$. The moment bound for $w_n$ follows in the same way, with constants uniform in $n$ because $w_n(0)\le u_0$.
\end{proof}

We now consider $y_n:=v_n+w_n$, which turns out to be a solution of \eqref{eq:SPDE} itself, as the following lemma shows.

\begin{lemma}\label{lem:summing}
Let $y_n:=v_n+w_n$. Then, for each $n\ge1$, on the filtered probability space of Proposition \ref{prop:existence_vw}, there exists a space-time white noise $\xi^{(n)}$ relative to $\{\bar{\mathcal F}_t\}$ such that  $y_n$ is a solution of
\[
    \partial_t y_n(t,x)
    =
     L y_n(t,x)+\sigma(y_n(t,x))\xi^{(n)}(t,x),
    \qquad
    y_n(0,x)=u_0(x).
\] 
\end{lemma}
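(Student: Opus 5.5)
Adding the weak formulations of \eqref{eq:spde_v_n} and \eqref{eq:spde_w_n} tested against $\varphi\in C_c^\infty(\bR)$ gives
\[
(y_n(t),\varphi)=(u_0,\varphi)+\int_0^t(y_n(s),L^*\varphi(s))\,\ud s + M^\varphi_t,
\]
where
\[
M^\varphi_t:=\int_0^t\!\!\int_{\bR}\sigma(v_n)\varphi\,\xi_1(\ud s,\ud x)+\int_0^t\!\!\int_{\bR}\bigl(\sigma^2(y_n)-\sigma^2(v_n)\bigr)^{1/2}\varphi\,\xi_2(\ud s,\ud x).
\]
The initial data add up because $u_0\rho(\cdot/n)+u_0(1-\rho(\cdot/n))=u_0$. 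The drift adds by linearity of $L^*$. Properties (i)--(ii) of Definition~\ref{def:weak_soln} follow from those of $v_n,w_n$ and the continuity of $\sigma$, since $y_n\in C([0,T];C_{\rm tem})$. It therefore only remains to represent $M^\varphi$ as a stochastic integral of $\sigma(y_n)\varphi$ against a single white noise.

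\textbf{Construction of $\xi^{(n)}$.} Set
\[
\alpha:=\frac{\sigma(v_n)}{\sigma(y_n)},\qquad \beta:=\frac{(\sigma^2(y_n)-\sigma^2(v_n))^{1/2}}{\sigma(y_n)}\qquad\text{on }\{\sigma(y_n)>0\},
\]
and $\alpha:=1$, $\beta:=0$ on $\{\sigma(y_n)=0\}$. These are predictable in $(t,x)$ because $v_n,w_n$ are continuous and adapted. Since $\sigma$ is nondecreasing (Remark after Assumption~\ref{asp:sigma}) and $w_n\ge0$, the radicand is nonnegative, and $\alpha^2+\beta^2=1$ everywhere. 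Define the martingale measure
\[
\xi^{(n)}(\psi):=\int\!\!\int\alpha\psi\,\ud\xi_1+\int\!\!\int\beta\psi\,\ud\xi_2,\qquad \psi\in L_2(\bR_+\times\bR)\ \text{deterministic}.
\]

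\textbf{Verifying that $\xi^{(n)}$ is a white noise.} For fixed $\psi$, the process $t\mapsto\xi^{(n)}(\psi\mathbf 1_{[0,t]})$ is a continuous martingale. Because $\xi_1,\xi_2$ are independent, its quadratic variation is $\int_0^t\!\int(\alpha^2+\beta^2)\psi^2=\int_0^t\!\int\psi^2$, which is deterministic. More generally, the joint cross-variations of these martingales over finitely many $\psi$'s are deterministic. By Lévy's characterization (multivariate version), the family is Gaussian, with independent increments relative to $\{\bar{\mathcal F}_t\}$ and with the white-noise covariance. This is the only genuinely delicate step: one must check the Lévy argument at the level of martingale measures, namely that $\xi^{(n)}$ is worthy and orthogonal with covariance $\ud t\,\ud x$. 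Independence from $\bar{\mathcal F}_s$ of the noise after time $s$ follows from the same characterization applied to the increments.

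\textbf{Identifying the integral.} On $\{\sigma(y_n)>0\}$ we have $\sigma(y_n)\alpha=\sigma(v_n)$ and $\sigma(y_n)\beta=(\sigma^2(y_n)-\sigma^2(v_n))^{1/2}$. On $\{\sigma(y_n)=0\}$, monotonicity gives $\sigma(v_n)=0$, so both sides vanish there as well. Hence, by the associativity rule for stochastic integrals against martingale measures (checked on simple integrands and extended by the It\^o isometry, with local integrability from (ii)),
\[
\int_0^t\!\!\int\sigma(y_n)\varphi\,\xi^{(n)}(\ud s,\ud x)=M^\varphi_t.
\]
This shows that $y_n$ solves the stated equation with noise $\xi^{(n)}$.
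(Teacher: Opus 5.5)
Your proof is correct and follows the same route as the paper. You use the same coefficients $\alpha,\beta$ with $\alpha^2+\beta^2=1$, the noise $\xi^{(n)}=\alpha\xi_1+\beta\xi_2$, and the identity $\sigma(v_n)\xi_1+(\sigma^2(y_n)-\sigma^2(v_n))^{1/2}\xi_2=\sigma(y_n)\xi^{(n)}$, with monotonicity of $\sigma$ handling the set $\{\sigma(y_n)=0\}$. The only difference is that you sketch, via the multivariate L\'evy characterization, the fact that $\xi^{(n)}$ is a space-time white noise, whereas the paper simply cites it from \cite[Corollary A.7]{KKMS23}.
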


\begin{proof}
Since $0\le v_n\le y_n$ and $r\mapsto\sigma^2(r)$ is nondecreasing on $[0,\infty)$ by Assumption \ref{asp:sigma}(iii),
\[
    D_n:=\sigma^2(y_n)-\sigma^2(v_n)\ge0,
\]
and moreover $\sigma(v_n)=0$ whenever $\sigma(y_n)=0$. Define the predictable coefficients
\[
    \alpha_n:=\frac{\sigma(v_n)}{\sigma(y_n)}\mathbf 1_{\{\sigma(y_n)>0\}}
    +\mathbf 1_{\{\sigma(y_n)=0\}},
    \qquad
    \beta_n:=\frac{\sqrt{D_n}}{\sigma(y_n)}\mathbf 1_{\{\sigma(y_n)>0\}},
\]
so that $\alpha_n^2+\beta_n^2=1$, $\sigma(y_n) \alpha_n=\sigma(v_n)$, and $\sigma(y_n)\beta_n=\sqrt{D_n}$. Since a linear combination $\alpha_n\xi_1+\beta_n\xi_2$ of independent space-time white noises with predictable coefficients satisfying $\alpha_n^2+\beta_n^2=1$ is again a space-time white noise \cite[Corollary A.7]{KKMS23}, $\xi^{(n)}:=\alpha_n\xi_1+\beta_n\xi_2$ is a space-time white noise and
\[
    \sigma(v_n)\xi_1+\sqrt{D_n}\,\xi_2=\sigma(y_n)\xi^{(n)}.
\]
Adding the equations for $v_n$ and $w_n$ shows that $y_n$ is a solution of the original SPDE driven by $\xi^{(n)}$, with
$y_n(0,\cdot)=u_0\rho(\cdot/n)+u_0(1-\rho(\cdot/n))=u_0$.
\end{proof}

Note that, thanks to Lemma \ref{lem:summing}, for each $n\ge1$, $y_n$ is a solution of the original SPDE with initial function $u_0$, although the driving white noise $\xi^{(n)}$ may depend on $n$. The uniqueness in law assumption in Theorem \ref{thm:CSP_regular_initial} will imply that the law of $y_n$ is independent of $n$.

\subsection{Weighted mass and $L_\epsilon$-estimates}

In this subsection we prove weighted mass estimates for the tail process $w_n$. These estimates have two roles. First, they imply that the total mass of $w_n$ is finite and has a supermartingale structure. Second, when a positive weighted moment is available, they give $L_\epsilon$-bounds for $\epsilon<1$, which will be used later to control sublinear terms of the form $v_n^{p\gamma}$ and $w_n^{p\gamma}$.

Recall the notation
\[
    \Phi_q(x):=(1+x^2)^{q/2},
    \qquad
    W_q(f):=\int_{\bR}\Phi_q(x)f(x)\,\ud x,
\]
so that $W_0(f)=\|f\|_{L_1(\bR)}$.

\begin{lemma}
\label{lem:mtg_bound_w}
Let $\alpha\in(0,1)$, $q\ge0$, and $n\in\mathbb N$, and assume
$\E[W_q(w_n(0))^\alpha]<\infty$. Then there exists a constant
$N=N(\alpha,q,\nu,T)>0$ such that
\begin{equation}\label{eq:weighted_mass_bound}
\E\left[
\sup_{t\le T}
W_q(w_n(t))^\alpha
\right]
\le
N
\E\left[
W_q(w_n(0))^\alpha
\right].
\end{equation}
In particular, $w_n(t)\in L_1(\bR)$ for all $t\in[0,T]$ almost surely.
Furthermore, there exists $K=K(\nu)>0$ such that, for all $0\le s\le t\le T$,
\begin{equation}\label{eq:mass_supermartingale_ineq}
\begin{aligned}
e^{-Kt}\int_{\bR}w_n(t,x)\,\ud x
&\le
e^{-Ks}\int_{\bR}w_n(s,x)\,\ud x  \\
&\quad+
\int_s^t e^{-Kr}\int_{\bR}
\sqrt{
\sigma^2(w_n(r,x)+v_n(r,x))-\sigma^2(v_n(r,x))
}\,
\xi_2(\ud r,\ud x).
\end{aligned}
\end{equation}
Consequently,
$\left(e^{-Kt}\int_{\bR}w_n(t,x)\,\ud x\right)_{t\in[0,T]}$
is a nonnegative continuous supermartingale.
\end{lemma}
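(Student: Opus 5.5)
The plan is to test the equation for $w_n$ against truncated weights and pass to the limit. Fix $m\ge1$ and use the test function $\Phi_q\zeta_m$. It is not compactly supported, but it is smooth and decays exponentially. Since $w_n\in C([0,T];C_{\rm tem})$ and the weak formulation extends from $C_c^\infty$ to such rapidly decaying functions by a standard cutoff and approximation, we obtain
\[
W_{q,m}(t):=\int_{\bR}\Phi_q\zeta_m\,w_n(t)\,\ud x
= W_{q,m}(0)+\int_0^t\big(w_n(s),L^*(\Phi_q\zeta_m)\big)\,\ud s+ M_m(t),
\]
where $M_m$ is the stochastic integral of $\widetilde\sigma_n(w_n)\Phi_q\zeta_m$ against $\xi_2$.

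The key deterministic bound is
\[
|L^*(\Phi_q\zeta_m)|\le K\,\Phi_q\zeta_m ,
\]
with $K=K(\nu,q)$ independent of $m$. To see this, expand $L^*\varphi=a\varphi''+(2a'-b)\varphi'+(a''-b'+c)\varphi$. Assumption \ref{asp:coefficients} bounds the coefficients, and we have $|\Phi_q'|+|\Phi_q''|\le N\Phi_q$ together with \eqref{eq:zeta_bounds}. Using $w_n\ge0$, this gives the differential inequality $\ud(e^{-Kt}W_{q,m})\le e^{-Kt}\,\ud M_m$. In particular $e^{-Kt}W_{q,m}(t)$ is a nonnegative local supermartingale, hence a true supermartingale.

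For \eqref{eq:weighted_mass_bound} with $\alpha\in(0,1)$, I would use the classical maximal inequality for nonnegative continuous supermartingales, or equivalently a Lenglart-type domination inequality. If $X$ is nonnegative, continuous, and dominated by $X_0$ plus a continuous local martingale, then
\[
\bE\Big[\sup_{t\le T}X_t^\alpha\Big]\le \frac{2-\alpha}{1-\alpha}\,\bE[X_0^\alpha].
\]
The route is to apply Doob's inequality $\bP(\sup X\ge\lambda\mid\mathcal F_0)\le X_0/\lambda\wedge1$ and integrate against $\alpha\lambda^{\alpha-1}\,\ud\lambda$. Applying this to $X=e^{-Kt}W_{q,m}$ gives a bound uniform in $m$, with $N=e^{K T\alpha}(2-\alpha)/(1-\alpha)$. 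Monotone convergence as $m\to\infty$ ($\zeta_m\uparrow1$) then yields \eqref{eq:weighted_mass_bound}, and finiteness of $\sup_t W_0(w_n(t))$ a.s. gives $w_n(t)\in L_1$. Here the case $q=0$ uses $W_0\le W_q$, or is applied directly.

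For \eqref{eq:mass_supermartingale_ineq}, take $q=0$ in the inequality $e^{-Kt}W_{0,m}(t)\le e^{-Ks}W_{0,m}(s)+\int_s^te^{-Kr}\,\ud M_m$ and let $m\to\infty$. The left side and the first term on the right converge a.s. by monotone convergence. The main obstacle is the convergence of the stochastic integrals, which requires
\[
\int_0^T\!\!\int_{\bR}\widetilde\sigma_n^2(w_n)(1-\zeta_m)^2\,\ud x\,\ud r\to0
\]
in probability, along with the fact that the limit integrand $\widetilde\sigma_n(w_n)$ is square integrable. I would get both from $\widetilde\sigma_n^2(z)\le\sigma^2(z+v_n)\le N\big((z+v_n)^{2\gamma}+(z+v_n)^2\big)$. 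Since $\widetilde\sigma_n^2(z)\le\sigma^2(z)$ is not clearly available, I would instead bound it by $\sigma^2(w_n+v_n)$, since $\sigma^2$ is nondecreasing.

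What remains is spatial integrability of $(w_n+v_n)^{2\gamma}$, which is delicate when $2\gamma<1$. Two inputs help: $v_n$ is compactly supported, and the bracket of the supermartingale $e^{-Kt}W_{0,m}$ is controlled. The latter holds because the quadratic variation of a nonnegative continuous local martingale bounded below is a.s. finite, and $\langle M_m\rangle_T$ increases in $m$ to $\int\!\!\int\widetilde\sigma_n^2(w_n)$. If this limit were infinite, the supermartingales $W_{0,m}$ would have to go below zero, contradicting nonnegativity. More precisely, I would use the fact that a continuous local martingale bounded below by $-X_0$ has a.s. finite bracket, uniformly in $m$ in probability via BDG with the $\alpha$-moment bound. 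With the integrand in $L_2$ a.s., dominated convergence for stochastic integrals gives the limit, proving \eqref{eq:mass_supermartingale_ineq}. Nonnegativity and continuity then make $e^{-Kt}\int w_n$ a nonnegative local supermartingale, hence a supermartingale.
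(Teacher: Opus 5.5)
Your proposal is correct and follows essentially the same route as the paper: you test against $\Phi_q\zeta_m$ using the $m$-uniform bound on $L^*(\Phi_q\zeta_m)$, apply the $\alpha$-maximal inequality for nonnegative supermartingales and then monotone convergence in $m$, and for $q=0$ you combine the lower bound (discounted martingale part) $\ge -X_0^m$ with BDG to get $\int_0^T\!\int_{\bR}\widetilde\sigma_n^2(w_n)\,\ud x\,\ud r<\infty$ a.s., which gives convergence of the stochastic integrals and the two-time inequality. The intermediate detour through spatial integrability of $(w_n+v_n)^{2\gamma}$ and compact support of $v_n$ is unnecessary; the bracket argument you settle on is exactly what the paper uses.
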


\begin{proof}
For simplicity, set
\[
    D_n(t,x)
    :=
    \sigma^2(w_n(t,x)+v_n(t,x))-\sigma^2(v_n(t,x)).
\]
The proof has two parts. We first establish the weighted bound \eqref{eq:weighted_mass_bound} by testing the equation for $w_n$ against the weight $\Phi_q\zeta_m$ and using the maximal inequality for the resulting nonnegative supermartingale. Then, we set $q=0$ and pass to the limit $m\to\infty$ to obtain the supermartingale inequality \eqref{eq:mass_supermartingale_ineq}. Throughout, $\Phi_q$ and $\zeta_m$ are as in \eqref{eq:weight_ft} and \eqref{eq:zeta}, and we use the derivative bounds \eqref{eq:zeta_bounds} for $\zeta_m$, together with $|\Phi_q'|+|\Phi_q''|\le N_q\Phi_q$. In particular, we have 
\[
    |(\Phi_q\zeta_m)'|+|(\Phi_q\zeta_m)''|\le N_q\Phi_q\zeta_m .
\]

We begin with the weighted estimate. By Assumption \ref{asp:coefficients} and the bound above, there exists $K=K(q,\nu)>0$, independent of $m$ and $n$, such
that
\begin{equation}\label{eq:Lstar_weight_bound}
    L^*(\Phi_q\zeta_m)(t,x)-K\Phi_q(x)\zeta_m(x)
    \le0
    \qquad\text{for all }(t,x).
\end{equation}
Since $\Phi_q\zeta_m$ is not compactly supported, we first replace it by $\Phi_q\zeta_m\chi_R$ with a standard cutoff $\chi_R\in C_c^\infty(\bR)$, derive the estimates uniformly in $R$, and let $R\to\infty$. After passing to the limit $R\to\infty$, testing the weak formulation for $w_n$ against $\Phi_q\zeta_m$ and multiplying by $e^{-Kt}$ gives
\begin{equation}\label{eq:weighted_mass_decomp_m}
\begin{aligned}
e^{-Kt}\int_{\bR}w_n(t,x)\Phi_q(x)\zeta_m(x)\,\ud x
&=
\int_{\bR}w_n(0,x)\Phi_q(x)\zeta_m(x)\,\ud x \\
&\quad+
\int_0^t e^{-Ks}
\int_{\bR}w_n(s,x)
\left[L^*(\Phi_q\zeta_m)(s,x)-K\Phi_q(x)\zeta_m(x)
\right]\ud x\,\ud s \\
&\quad+
\int_0^t e^{-Ks}\int_{\bR}
\sqrt{D_n(s,x)}\,\Phi_q(x)\zeta_m(x)\,\xi_2(\ud s,\ud x).
\end{aligned}
\end{equation}
By \eqref{eq:Lstar_weight_bound}, the drift term is nonpositive, so
\[
    S_t^m
    :=
    e^{-Kt}\int_{\bR}w_n(t,x)\Phi_q(x)\zeta_m(x)\,\ud x
\]
is a nonnegative continuous local supermartingale, hence a supermartingale. The maximal inequality for nonnegative supermartingales with exponent $\alpha\in(0,1)$ then gives 
\[ 
\E\left[\sup_{t\le T}(S_t^m)^\alpha\right]\le N_\alpha\E[(S_0^m)^\alpha]. 
\]
Since $e^{-KT}\le e^{-Kt}\le1$, letting $m\to\infty$ and using
$\zeta_m\uparrow1$ with monotone convergence yields
\eqref{eq:weighted_mass_bound}. Since $\Phi_q\ge1$, this also shows $\int_{\bR}w_n(t,x)\,\ud x<\infty$ for all $t\in[0,T]$ almost surely.

It remains to establish \eqref{eq:mass_supermartingale_ineq}. Here, we take $q=0$ in $\Phi_q$, i.e., $\Phi_0\equiv1$. We also take $K=K(\nu)>0$ large enough so that $L^*\zeta_m-K\zeta_m\le0$. Set
\[
    X_t^m
    :=
    e^{-Kt}\int_{\bR}w_n(t,x)\zeta_m(x)\,\ud x
    \quad\text{and}\quad
    M_t^m
    :=
    \int_0^t e^{-Ks}\int_{\bR}
        \sqrt{D_n(s,x)}\,\zeta_m(x)\,\xi_2(\ud s,\ud x),
\]
so that $X_t^m=X_0^m+A_t^m+M_t^m$ with $A^m$ nonincreasing and $A_0^m=0$. In particular,
\begin{equation}\label{eq:two_time_ineq_m}
    X_t^m\le X_s^m+M_t^m-M_s^m,
    \qquad 0\le s\le t\le T,
\end{equation}
and $X_0^m+M_t^m=X_t^m-A_t^m\ge0$ is a nonnegative continuous local martingale, hence a nonnegative supermartingale. From $|M_t^m|\le X_0^m+(X_0^m+M_t^m)$ and the maximal inequality for supermartingales applied to
$X_0^m+M_t^m$, we obtain, for any $\alpha\in(0,1)$,
\begin{equation}\label{eq:maximal_ineq}
\E\left[
    \sup_{t\le T}|M_t^m|^\alpha
\right]
\le
N_\alpha
\E\left[
    \left(\int_{\bR}w_n(0,x)\,\ud x\right)^\alpha
\right].
\end{equation}
Since $\langle M^m\rangle_T\ge e^{-2KT}\int_0^T\int_{\bR}D_n\zeta_m^2\,\ud x\,\ud s$,
the Burkholder--Davis--Gundy inequality together with \eqref{eq:maximal_ineq}
gives
\[
\E\left[
    \left(\int_0^T\int_{\bR}
    D_n(s,x)\zeta_m^2(x)\,\ud x\,\ud s\right)^{\alpha/2}
\right]
\le
N_{\alpha,T}
\E\left[\sup_{t\le T}|M_t^m|^\alpha\right]
\le
N_{\alpha,T}
\E\left[
    \left(\int_{\bR}w_n(0,x)\,\ud x\right)^\alpha
\right],
\]
and letting $m\to\infty$, monotone convergence yields
\begin{equation}
\E\left[
    \left(\int_0^T\int_{\bR}
    D_n(s,x)\,\ud x\,\ud s\right)^{\alpha/2}
\right]
\le
N_{\alpha,T}
\E\left[
    \left(\int_{\bR}w_n(0,x)\,\ud x\right)^\alpha
\right].
\end{equation}
In particular, we have that 
\begin{equation}\label{eq:D_n}
    \int_0^T\int_{\bR}D_n(s,x)\,\ud x\,\ud s<\infty
    \qquad\text{a.s.}
\end{equation}
Thus the stochastic integral
\begin{equation}\label{eq:M_t}
    M_t
    :=
    \int_0^t e^{-Ks}\int_{\bR}
        \sqrt{D_n(s,x)}\,\xi_2(\ud s,\ud x)
\end{equation}
is well-defined and continuous. Moreover,
\[
    \langle M^m-M\rangle_T
    =
    \int_0^T e^{-2Ks}
    \int_{\bR}
        D_n(s,x)(\zeta_m(x)-1)^2\,\ud x\,\ud s
    \longrightarrow0
    \qquad\text{a.s.}
\]
Hence $M^m\to M$ uniformly on $[0,T]$ in probability. Letting $m\to\infty$ in \eqref{eq:two_time_ineq_m} along a subsequence converging almost surely, we have \eqref{eq:mass_supermartingale_ineq}.

Finally, we verify the supermartingale property. Set
\[
    X_t := e^{-Kt}\int_{\bR}w_n(t,x)\,\ud x,
    \qquad t\in[0,T],
\]
which is finite for every $t\in[0,T]$ almost surely by
\eqref{eq:weighted_mass_bound}. By
\eqref{eq:mass_supermartingale_ineq}, for all $0\le s\le t\le T$,
\begin{equation}\label{eq:X_ineq}
    X_t \le X_s + M_t - M_s ,
\end{equation}
where $M$ is the continuous local martingale constructed in \eqref{eq:M_t}. In
particular, $X$ is continuous. Let $(\tau_k)_{k\ge1}$ be a localizing
sequence for $M$, so that $M_{\cdot\wedge\tau_k}$ is a martingale for
each $k$ and $\tau_k\uparrow T$ almost surely. Fix
$0\le s\le t\le T$. Applying \eqref{eq:X_ineq} with the pair
$(s\wedge\tau_k,\,t\wedge\tau_k)$ and taking conditional
expectations, we obtain
\[
    \E\left[X_{t\wedge\tau_k}\,\big|\,\mathcal F_s\right]
    \le
    \E\left[X_{s\wedge\tau_k}\,\big|\,\mathcal F_s\right]
    +
    \E\left[M_{t\wedge\tau_k}-M_{s\wedge\tau_k}
        \,\big|\,\mathcal F_s\right]
    =
    \E\left[X_{s\wedge\tau_k}\,\big|\,\mathcal F_s\right],
\]
where the conditional expectations are well-defined in $[0,\infty]$
since $X\ge0$. We now use conditional Fatou's lemma to obtain 
\[
    \E\left[X_t\,\big|\,\mathcal F_s\right]
    \le
    \liminf_{k\to\infty}
    \E\left[X_{s\wedge\tau_k}\,\big|\,\mathcal F_s\right].
\]
Since  $\{\tau_k\ge s\}\in\mathcal F_{s\wedge\tau_k}\subseteq\mathcal F_s$, we have  $\E\left[X_{s\wedge\tau_k}\,\big|\,\mathcal F_s\right] \mathbf 1_{\{\tau_k\ge s\}} = X_s\,\mathbf 1_{\{\tau_k\ge s\}}$. 
In addition, since $\mathbf 1_{\{\tau_k\ge s\}}\uparrow1$ almost surely, we conclude
\[
    \E\left[X_t\,\big|\,\mathcal F_s\right]\le X_s
    \qquad\text{a.s.}
\]
Hence $X=\left(e^{-Kt}\int_{\bR}w_n(t,x)\,\ud x\right)_{t\in[0,T]}$
is a nonnegative continuous supermartingale.
\end{proof}

The weighted mass bound also yields the following $L_\epsilon$-estimate for $\epsilon<1$, and this will be used later to control the sublinear term $w_n^{p\gamma}$. 

\begin{corollary}\label{cor:L_epsilon_bound_w_n}
Let $q>0$ and $\epsilon\in\left(\frac1{q+1},1\right)$. Then, for every bounded stopping time $\tau\le T$, there exists $N=N(\epsilon,q,\nu,T)>0$ such that
\[
\E\left[
    \sup_{0<t\le\tau}
    \int_{\bR}w_n(t,x)^\epsilon\,\ud x
\right]
\le
N
\E\left[
    W_q(w_n(0))^\epsilon
\right].
\]
\end{corollary}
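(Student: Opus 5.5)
The plan is to reduce the $L_\epsilon$-bound to the weighted mass bound \eqref{eq:weighted_mass_bound} of Lemma \ref{lem:mtg_bound_w} by a pointwise H\"older interpolation in $x$. For a fixed nonnegative function $f$ and $\epsilon\in(0,1)$, write
\[
f^\epsilon=\bigl(f\Phi_q\bigr)^\epsilon\,\Phi_q^{-\epsilon}
\]
and apply H\"older's inequality with exponents $1/\epsilon$ and $1/(1-\epsilon)$:
\[
\int_{\bR}f(x)^\epsilon\,\ud x
\le
\Bigl(\int_{\bR}f\Phi_q\,\ud x\Bigr)^{\epsilon}
\Bigl(\int_{\bR}\Phi_q(x)^{-\epsilon/(1-\epsilon)}\,\ud x\Bigr)^{1-\epsilon}.
\]
The last integral is $\int(1+x^2)^{-q\epsilon/(2(1-\epsilon))}\,\ud x$. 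It is finite exactly when $q\epsilon/(1-\epsilon)>1$, that is, $\epsilon(q+1)>1$. This is precisely the hypothesis $\epsilon>1/(q+1)$. Hence
\[
\int_{\bR} f^\epsilon\,\ud x\le N(\epsilon,q)\,W_q(f)^\epsilon .
\]

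Next, apply this with $f=w_n(t)$ for each $t\in(0,\tau]$ and take the supremum over $t$. Since $\tau\le T$, this gives
\[
\sup_{0<t\le\tau}\int_{\bR}w_n(t,x)^\epsilon\,\ud x
\le N\sup_{t\le T}W_q(w_n(t))^\epsilon .
\]
Taking expectations and invoking \eqref{eq:weighted_mass_bound} with $\alpha=\epsilon\in(0,1)$ yields the claim with $N=N(\epsilon,q,\nu,T)$.

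That lemma requires $\E[W_q(w_n(0))^\epsilon]<\infty$. If this expectation is infinite, the asserted inequality is trivial, so we may assume it is finite. Measurability of the supremum is not an issue: continuity of $w_n$ in $C_{\rm tem}$ together with Fatou's lemma makes $t\mapsto\int w_n^\epsilon$ lower semicontinuous, so the supremum may be taken over rational times.

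There is no real obstacle here. The only point requiring care is the integrability threshold of $\Phi_q^{-\epsilon/(1-\epsilon)}$, which is exactly where the condition $\epsilon>1/(q+1)$ enters. The stopping time $\tau$ plays no essential role, since the bound holds already with $\tau$ replaced by $T$.
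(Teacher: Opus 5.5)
Your proposal is correct and follows the paper's proof essentially verbatim: H\"older with exponents $1/\epsilon$ and $1/(1-\epsilon)$ against the weight $\Phi_q$, the integrability of $\Phi_q^{-\epsilon/(1-\epsilon)}$ exactly when $\epsilon>\frac1{q+1}$, and then Lemma~\ref{lem:mtg_bound_w} with $\alpha=\epsilon$. Your extra remarks on the case $\E[W_q(w_n(0))^\epsilon]=\infty$ and on measurability of the supremum are valid and simply make explicit points the paper leaves implicit.
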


\begin{proof}
For every nonnegative measurable function $f$, H\"older's inequality with exponents $1/\epsilon$ and $1/(1-\epsilon)$ gives
\begin{equation}\label{eq:Holder_epsilon}
\int_{\bR}f(x)^\epsilon\,\ud x
=
\int_{\bR}
\left(\Phi_q(x)f(x)\right)^\epsilon
\Phi_q(x)^{-\epsilon}\,\ud x
\le
W_q(f)^\epsilon
\left(
\int_{\bR}
\Phi_q(x)^{-\epsilon/(1-\epsilon)}\,\ud x
\right)^{1-\epsilon},
\end{equation}
and the last integral is finite exactly when $\epsilon>\frac1{q+1}$. Applying this with $f=w_n(t)$, taking the supremum over $t\le\tau$, and using Lemma \ref{lem:mtg_bound_w} with $\alpha=\epsilon$, gives the result.
\end{proof}

\begin{remark}\label{rmk:L_epsilon_bound_v_n}
Lemma \ref{lem:mtg_bound_w} and Corollary \ref{cor:L_epsilon_bound_w_n} also hold with $w_n$ replaced by $v_n$ and, more generally, by any nonnegative solution whose noise coefficient vanishes at $0$ and satisfies the growth bound of Assumption \ref{asp:sigma} -- with the same proofs. This general form will be used again in Proposition \ref{prop:existence_measure_initial}.
\end{remark}

\subsection{Uniform supremum estimates for the split solutions}
\label{subsec:uniform-sup-estimates}

In this subsection we prove supremum estimates in time and space for the two split solutions $v_n$ and $w_n$, uniformly in $n$, which will be used in the tail extinction argument in Section \ref{sec:tail_extinction}. The proof is based on Krylov's $L_p$-estimate applied to the localized processes $v_n\zeta_m$ and $w_n\zeta_m$. The weighted mass estimates from the previous subsection enter only through the control of the lower-order terms $v_n^{p\gamma}$ and $w_n^{p\gamma}$, which arise from the growth condition in Assumption \ref{asp:sigma}(ii):
\begin{equation*}
    \sigma(u)^p \le N_p(u^{p\gamma} + u^p),\qquad u\ge0.
\end{equation*} When $p\gamma\ge1$, these are controlled by the mass and the $L_p$-norm appearing in Krylov's estimate; when $p\gamma<1$, we use the weighted $L_\epsilon$-estimate with $\epsilon=p\gamma$.

\begin{lemma}
\label{lem:uniform_bound}
Let $\tau\le T$ be a bounded stopping time. Recall the parameters $\gamma$ and $p$ from Assumption \ref{asp:sigma} and \eqref{eq:asp_initial_regularity}, respectively. Assume in addition that one of the following conditions holds:
\begin{enumerate}
\item[\rm (a)]
$p\gamma\ge1$ and
\[
    \bE\int_{\bR}u_0(x)\,\ud x<\infty;
\]

\item[\rm (b)]
$p\gamma<1$, and for some $q>0$,
\[
    p\gamma>\frac1{q+1},
    \qquad
    \bE\int_{\bR}(1+x^2)^{q/2}u_0(x)\,\ud x<\infty .
\]
\end{enumerate}
Then there exists a constant $N>0$, independent of $n$, such that
\[
\sup_{n\ge1}
\bE\left[
\sup_{\substack{0\le t\le\tau\\x\in\bR}}v_n(t,x)
+
\sup_{\substack{0\le t\le\tau\\x\in\bR}}w_n(t,x)
\right]
\le N .
\]
\end{lemma}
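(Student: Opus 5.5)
We localize in space and apply Krylov's $L_p$-estimate (Theorem \ref{thm:krylov_lp_estimate}) to $v_n\zeta_m$ and $w_n\zeta_m$. Then we pass to a supremum bound through the embedding Theorem \ref{thm:embedding_theorems}\eqref{embedding in time and space}, and finally let $m\to\infty$. We work with $w_n$; the argument for $v_n$ is identical (Remark \ref{rmk:L_epsilon_bound_v_n}).

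\textbf{Localized equation.} For $m\ge1$ the product $z:=w_n\zeta_m$ satisfies
\[
\ud z=\bigl(Lz+f\bigr)\ud t+\sum_k \zeta_m\widetilde\sigma_n(w_n)e_k\,\ud w^k_t,
\qquad
f=-2a\zeta_m'\partial_x w_n-a\zeta_m''w_n-b\zeta_m'w_n,
\]
with $\widetilde\sigma_n(w_n)=(\sigma^2(w_n+v_n)-\sigma^2(v_n))^{1/2}$. Using $\partial_x w_n\,\zeta_m'=\partial_x(w_n\zeta_m')-w_n\zeta_m''$ and \eqref{eq:zeta_bounds}, the drift $f$ lies in $H_p^{\eta-1}$ and satisfies
\[
\|f\|_{H^{\eta-2}_p}\le N\|w_n\zeta_m\|_{H_p^{\eta-1}}\le N\|z\|_{H_p^{\eta-1}}+\cdots .
\]
Here the omitted terms are handled by multiplier estimates (Lemma \ref{lem:prop_of_bessel_space}\eqref{pointwise_multiplier}), since $\zeta_m'/\zeta_m$ and $\zeta_m''/\zeta_m$ are smooth and bounded uniformly in $m$. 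This term is lower order and will be absorbed via interpolation \eqref{multi_ineq} and Gronwall in time, after stopping at $\tau$.

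For the noise term, Lemma \ref{lem:white_noise_estimate} gives a bound by $\|\zeta_m\widetilde\sigma_n(w_n)\|_{\mathbb L_p(\tau)}$. Since $\widetilde\sigma_n^2(w)\le \sigma^2(w+v)$, Assumption \ref{asp:sigma}(ii) yields
\[
\zeta_m^p\widetilde\sigma_n^p\le N\bigl(\zeta_m^p (v_n+w_n)^{p\gamma}+\zeta_m^p(v_n+w_n)^p\bigr).
\]
The second piece is $\le N(|z|^p+|v_n\zeta_m|^p)$ and is again absorbed by Gronwall, treating the pair $(v_n,w_n)$ jointly. The first piece is the genuinely sublinear term. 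In case (a), $p\gamma\ge1$, we bound $x^{p\gamma}\le x+x^p$, so the term is controlled by the total mass $\int (v_n+w_n)\,\ud x$. Its expectation is finite uniformly in $n$, since by Lemma \ref{lem:mtg_bound_w} the mass is dominated by a supermartingale with initial value at most $\int u_0$. This holds with $\E\sup$, not only with a power $\alpha<1$: apply the supermartingale property with $\alpha=1$ pointwise in $t$, integrated in time. In case (b), we instead use Corollary \ref{cor:L_epsilon_bound_w_n} with $\epsilon=p\gamma\in(1/(q+1),1)$ to get
\[
\E\int_0^\tau\!\!\int (v_n+w_n)^{p\gamma}\,\ud x\,\ud t\le N\,\E W_q(u_0)^{p\gamma}\le N\bigl(1+\E W_q(u_0)\bigr).
\]
The initial data satisfy $\|w_n(0)\zeta_m\|_{U_p^\eta}\le N\|u_0\|_{U_p^\eta}$ uniformly in $n,m$, by the multiplier estimate for $(1-\rho(\cdot/n))\zeta_m$, whose $C^1$ norms are uniform.

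\textbf{Krylov estimate and embedding.} Combining the above, Krylov's estimate with a Gronwall argument in $\tau$ (apply it on $[0,\tau\wedge t]$ and use interpolation to absorb $\|z\|_{\mathbb H_p^{\eta-1}}$) gives
\[
\|z\|_{\mathcal H_p^\eta(\tau)}^p\le N\Bigl(\|u_0\|^p_{U_p^\eta}+1+\E\!\int u_0\ \text{or}\ \E W_q(u_0)\Bigr),
\]
uniformly in $n$ and $m$. Choosing $1/p<\mathfrak a<\mathfrak b<\frac12(\eta-1/p)$, which is possible because $\eta p>3$, Theorem \ref{thm:embedding_theorems}\eqref{embedding in time and space} yields $\E\sup_{t\le\tau,x}|w_n\zeta_m|^p\le N$. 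Letting $m\to\infty$ by monotone convergence, since $\zeta_m\uparrow1$, and then applying Hölder to pass from the $p$-th moment to the first moment, we obtain the claim.

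\textbf{Main obstacle.} The main difficulty is closing the estimate without a priori finiteness of $\|z\|_{\mathcal H_p^\eta}$, which is needed to absorb the terms. We would handle this first with stopping times $\tau_R=\inf\{t:\sup_x|\Psi_a w_n|\ge R\}\wedge\tau$, which make every quantity finite by Proposition \ref{prop:existence_vw}. Since all constants are independent of $R$, we then let $R\to\infty$. The second delicate point is ensuring that the $u^{p\gamma}$ term is genuinely controlled by mass alone, with no weighted $L_p$ growth. This is exactly where the dichotomy between (a) and (b) enters.
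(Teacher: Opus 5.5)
Your proposal is correct and follows essentially the same route as the paper. Both localize by $\zeta_m$, apply Theorem~\ref{thm:krylov_lp_estimate} to $v_n\zeta_m$ and $w_n\zeta_m$, split the noise via the growth bound, control the sublinear term by the mass when $p\gamma\ge1$ and by Corollary~\ref{cor:L_epsilon_bound_w_n} when $p\gamma<1$, close with Gronwall, and finish with the H\"older embedding and $m\to\infty$.

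The differences are minor. Your stopping times $\tau_R$ are not needed, since the moment bound of Proposition~\ref{prop:existence_vw} already makes the free terms finite. In either version you must still invoke the solvability part of Theorem~\ref{thm:krylov_lp_estimate} together with uniqueness in $\mathcal H_p^0$ to know that $w_n\zeta_m$ actually lies in $\mathcal H_p^\eta$, as the paper does. Also, $\E\sup_t$ of the mass is not controlled by $\E\int_{\bR}u_0\,\ud x$, but you only need $\int_0^T\E\int_{\bR}(v_n+w_n)(t,x)\,\ud x\,\ud t\le Te^{KT}\E\int_{\bR}u_0\,\ud x$, which is what you in fact use.
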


\begin{proof}
First of all, we note that since $0<p\gamma\le p$, we have, for every $y\ge0$,
\begin{equation}\label{eq:elementary_power}
    y^{p\gamma}\le 1+y^p
    \qquad\text{and}\qquad
    y^{p\gamma}\le y+y^p \quad\text{if }p\gamma\ge1.
\end{equation}
Both inequalities above will be used repeatedly below.

Let us now show the estimate for $v_n$. Fix $m\ge 1$. To estimate the $\cH^\eta_p(\tau)$--norm of $v_n\zeta_m$, consider the SPDE
\begin{equation}\label{eq:localized_v_sup}
\begin{aligned}
\ud Y_m &= \left[  L Y_m
- \left\{ 2a(v_n\zeta_m')_x - a v_n\zeta_m'' + b v_n\zeta_m' \right\} \right]\ud t
+
\sigma(v_n)\zeta_m\,\xi_1(\ud t,\ud x),
\\
Y_m(0,x)
&=
u_0(x)\rho(x/n)\zeta_m(x),
\end{aligned}
\end{equation}
regarded as a linear equation for $Y_m$ with prescribed free terms. First, since $\rho(\cdot/n)$ and $\zeta_m$ are pointwise multipliers on $H_p^{\eta-2/p}$ with bounds independent of $n$ and $m$, Lemma \ref{lem:prop_of_bessel_space}\eqref{pointwise_multiplier} shows that
\[
    \|Y_m(0)\|_{U_p^\eta}^p
    =
    \E\|u_0\rho(\cdot/n)\zeta_m\|_{H_p^{\eta-2/p}}^p
    \le
    N\|u_0\|_{U_p^\eta}^p<\infty.
\]
Next, note that $\zeta_m'/\zeta_m$ and $\zeta_m''/\zeta_m$ are bounded uniformly in $m$ and $x$ (see \eqref{eq:zeta_bounds}). Thus, by Lemma \ref{lem:prop_of_bessel_space}\eqref{pointwise_multiplier} and
\eqref{bounded_operator}, the isometry $\|g\|_{H_p^{\eta-1}}=\|R_{1-\eta}*g\|_{L_p}$,
Young's inequality, and the fact that $R_\alpha\in L_1$ for every $\alpha>0$
(Remark \ref{rem:bessel_kernel}), we have that 
\[
\| 2a(v_n\zeta_m')_x \|_{H_{p}^{\eta-2}}
\le N \| (v_n\zeta_m')_x \|_{H_{p}^{\eta-2}}
\le N \| v_n\zeta_m' \|_{H_{p}^{\eta-1}}
= N \|R_{1-\eta} * (v_n\zeta_m') \|_{L_p}
\le N \|v_n \zeta_m\|_{L_p}.
\]
Similarly, using
$R_{2-\eta}$ in place of $R_{1-\eta}$, we get that 
\[
    \| a v_n\zeta_m'' \|_{H_{p}^{\eta-2}}
    +
    \| b v_n\zeta_m' \|_{H_{p}^{\eta-2}}
    \le
    N\|v_n\zeta_m\|_{L_p}.
\]
We now integrate over $\Omega\times(0,t\wedge\tau]$ and use 
Proposition \ref{prop:existence_vw} for finiteness to see that 
\begin{equation}\label{eq:drift}
\left\|
2a(v_n\zeta_m')_x - a v_n\zeta_m'' + b v_n\zeta_m'
\right\|_{\bH_{p}^{\eta-2}(t\wedge\tau)}^{p}
\le
N\E\int_{0}^{t\wedge\tau}\|v_{n}(s)\zeta_{m}\|_{L_p}^p \,\ud s
<\infty.
\end{equation}
For the noise term, Lemma \ref{lem:white_noise_estimate} and the growth
bound in Assumption \ref{asp:sigma} imply
\begin{equation}\label{eq:diffusion}
\|\sigma(v_{n})\zeta_{m}\mathbf{e}\|_{\bH_{p}^{\eta-1}(t\wedge\tau,\ell_2)}^{p}
\le
N\E\int_{0}^{t\wedge\tau}\int_{\bR}
\left(v_n^{p\gamma}+v_n^p\right)\zeta_m^p
\,\ud x\,\ud s
<\infty,
\end{equation}
where finiteness again follows from Proposition \ref{prop:existence_vw} by \eqref{eq:elementary_power} and $\zeta_m\in L_p(\bR)$. 

By \eqref{eq:drift} and \eqref{eq:diffusion}, Krylov's $L_p$-estimate (Theorem \ref{thm:krylov_lp_estimate}) can be applied to \eqref{eq:localized_v_sup} so that 
there exists a unique solution $Y_m\in\cH_{p}^{\eta}(\tau)$, and
\begin{equation}\label{eq:Y_m}
\| Y_m \|_{\cH_{p}^{\eta}(t\wedge\tau)}^{p}
\le
N\| u_{0} \|_{U_{p}^{\eta}}^{p}
+N\left\|2a(v_n\zeta_m')_x - a v_n\zeta_m'' + b v_n\zeta_m'
\right\|_{\bH_{p}^{\eta-2}(t\wedge\tau)}^{p}
+N\|\sigma(v_{n})\zeta_{m}\mathbf{e}\|_{\bH_{p}^{\eta-1}(t\wedge\tau,\ell_2)}^{p},
\end{equation}
where $N=N(p,\eta,\nu,T)$ is independent of $m$ and $n$. On the other hand, since $v_n$ solves \eqref{eq:spde_v_n}, a direct computation shows that $v_n\zeta_m\in\cH_{p}^{0}(\tau)$ also satisfies \eqref{eq:localized_v_sup} in the sense of Definition \ref{def of cH}. Since $Y_m\in\cH_p^\eta(\tau)\subset\cH_p^0(\tau)$ and the solution to \eqref{eq:localized_v_sup}  in
$\cH_p^0(\tau)$ is unique, we conclude that 
\[
    Y_m(t,x)=v_n(t,x)\zeta_m(x)
    \qquad\text{for all }(t,x)\in[0,\tau]\times\bR
    \text{ almost surely.}
\]

We now refine the  estimate \eqref{eq:diffusion} into a form suitable
for Gronwall's inequality. The second term on the right-hand side of \eqref{eq:diffusion} equals
$N\E\int_0^{t\wedge\tau}\|Y_m(s)\|_{L_p}^p\,\ud s$. Thus,  it remains to control the
term involving $v_n^{p\gamma}$, and here the moment assumption on $u_0$ enters.

If $p\gamma<1$, then $0\le\zeta_m\le1$ and Corollary \ref{cor:L_epsilon_bound_w_n}
together with Remark \ref{rmk:L_epsilon_bound_v_n}, applied with
$\epsilon=p\gamma$ (admissible since $p\gamma>\tfrac1{q+1}$), yields
\[
\E\int_0^{t\wedge\tau}\int_{\bR}v_n^{p\gamma}\zeta_m^p\,\ud x\,\ud s
\le
T\,\E\left[\sup_{0\le r\le\tau}\int_{\bR}v_n(r,x)^{p\gamma}\,\ud x\right]
\le N .
\]
If $p\gamma\ge1$, then $0\le\zeta_m\le1$ and $p\gamma\le p$ give $v_n^{p\gamma}\zeta_m^p\le(v_n\zeta_m)^{p\gamma}$, so that by \eqref{eq:elementary_power},
\[
\E\int_0^{t\wedge\tau}\int_{\bR}v_n^{p\gamma}\zeta_m^p\,\ud x\,\ud s
\le
N\,\E\int_0^{t\wedge\tau}\|Y_m(s)\|_{L_p}^p\,\ud s
+
N\,\E\int_0^{t\wedge\tau}\int_{\bR}v_n\zeta_m\,\ud x\,\ud s .
\]
By the supermartingale property of Lemma \ref{lem:mtg_bound_w} (applied to
$v_n$ with $q=0$, via Remark~\ref{rmk:L_epsilon_bound_v_n}),
\[\E\int_{\bR}v_n(s,x)\zeta_m(x)\,\ud x\le e^{KT}\E\int_{\bR}u_0(x)\,\ud x,\] so
the last term is bounded uniformly in $n$ and $m$. In either case,
\begin{equation}\label{eq:noise_v}
\|\sigma(v_n)\zeta_m\mathbf e\|_{\bH_p^{\eta-1}(t\wedge\tau,\ell_2)}^p
\le
N+N\,\E\int_0^{t\wedge\tau}\|Y_m(s)\|_{L_p}^p\,\ud s .
\end{equation}
Plugging \eqref{eq:drift} and \eqref{eq:noise_v} into \eqref{eq:Y_m}, and using
$Y_m=v_n\zeta_m$ in \eqref{eq:drift}, we obtain
\[
\|Y_m\|_{\cH_p^\eta(t\wedge\tau)}^p
\le
N+N\,\E\int_0^{t\wedge\tau}\|Y_m(s)\|_{L_p}^p\,\ud s .
\]
By Theorem \ref{thm:embedding_theorems}\eqref{embedding in time},
$\E\sup_{0\le r\le s\wedge\tau}\|Y_m(r)\|_{L_p}^p
\le N\|Y_m\|_{\cH_p^\eta(s\wedge\tau)}^p$, which implies that 
\[
\|Y_m\|_{\cH_p^\eta(t\wedge\tau)}^p
\le
N+N\int_0^t\|Y_m\|_{\cH_p^\eta(s\wedge\tau)}^p\,\ud s. 
\]
We now use  Gronwall's inequality to get that 
\begin{equation}\label{eq:Y_m_v_uniform_bound}
\|Y_m\|_{\cH_p^\eta(\tau)}^p\le N,
\end{equation}
with $N$ independent of $n$ and $m$.

Since $\eta p>3$, we may choose $\tfrac1p<\ba<\bb<\tfrac12\left(\eta-\tfrac1p\right)$, and
Theorem~\ref{thm:embedding_theorems}\eqref{embedding in time and space} gives
\[
\E\|Y_m\|_{C^{\ba-1/p}([0,\tau];C^{\eta-2\bb-1/p}(\bR))}^p
\le N\|Y_m\|_{\cH_p^\eta(\tau)}^p\le N .
\]
In particular,
$\E\left[\sup_{0\le t\le\tau,\,x\in\bR}|Y_m(t,x)|^p\right]\le N$, so by Jensen's inequality
\[
    \E\left[\sup_{0\le t\le\tau,\,x\in\bR}v_n(t,x)\zeta_m(x)\right]\le N .
\]
Since $v_n\ge0$ and $\zeta_m\uparrow1$ as $m\to\infty$, monotone convergence yields
\begin{equation}\label{eq:uniform_v_bound}
\sup_{n\ge1}\E\left[\sup_{0\le t\le\tau,\,x\in\bR}v_n(t,x)\right]\le N .
\end{equation}

We now consider the estimate for $w_n$.  The argument parallels that for the estimate for $v_n$, so we
only indicate the differences. Set $\tilde Y_m:=w_n\zeta_m$. Since
$D_n:=\sigma^2(v_n+w_n)-\sigma^2(v_n)\ge0$ by
Assumption \ref{asp:sigma}(iii), $\tilde Y_m$ satisfies the analogue of
\eqref{eq:localized_v_sup} with $\sigma(v_n)\xi_1$ replaced by
$\sqrt{D_n}\,\xi_2$ and initial function
$\tilde Y_m(0,\cdot)=u_0(1-\rho(\cdot/n))\zeta_m$. The initial function and
the drift correction are estimated exactly as before, i.e., 
\[
\|\tilde Y_m(0)\|_{U_p^\eta}^p\le N\|u_0\|_{U_p^\eta}^p,
\qquad
\left\|2a(w_n\zeta_m')_x-aw_n\zeta_m''+bw_n\zeta_m'
\right\|_{\bH_p^{\eta-2}(t\wedge\tau)}^p
\le N\,\E\int_0^{t\wedge\tau}\|\tilde Y_m(s)\|_{L_p}^p\,\ud s.
\]
For the noise term, Lemma \ref{lem:white_noise_estimate} gives
\begin{equation}\label{eq:w_noise_term}
\|\sqrt{D_n}\,\zeta_m\mathbf e\|_{\bH_p^{\eta-1}(t\wedge\tau,\ell_2)}^p
\le N\,\E\int_0^{t\wedge\tau}\!\!\int_{\bR}D_n^{p/2}\zeta_m^p\,\ud x\,\ud s .
\end{equation}
Since $0\le D_n\le\sigma^2(v_n+w_n)$, the growth bound in
Assumption \ref{asp:sigma}(ii) yields
\begin{equation}\label{eq:D_n_split}
D_n^{p/2}\zeta_m^p
\le N(v_n\zeta_m)^p+N(w_n\zeta_m)^p+N(v_n+w_n)^{p\gamma}\zeta_m^p .
\end{equation}
In particular, the right-hand side of \eqref{eq:w_noise_term} is finite by
\eqref{eq:elementary_power} and Proposition \ref{prop:existence_vw}, so Krylov's estimate (Theorem \ref{thm:krylov_lp_estimate}) applies and identifies the unique solution with $w_n\zeta_m$.

It remains to bound the three terms in \eqref{eq:D_n_split}. 
The first is uniformly bounded after integration by \eqref{eq:Y_m_v_uniform_bound}, 
and the second contributes $N\,\E\int_0^{t\wedge\tau}\|\widetilde Y_m(s)\|_{L_p}^p\,\ud s$. The
last one is treated exactly as the term $v_n^{p\gamma}\zeta_m^p$ above, using
the subadditivity $(v_n+w_n)^{p\gamma}\le v_n^{p\gamma}+w_n^{p\gamma}$ when
$p\gamma<1$ and \eqref{eq:elementary_power} when $p\gamma\ge1$ The terms
involving $v_n$ are controlled by \eqref{eq:Y_m_v_uniform_bound} and
Remark \ref{rmk:L_epsilon_bound_v_n}. Hence
\[
\|\sqrt{D_n}\,\zeta_m\mathbf e\|_{\bH_p^{\eta-1}(t\wedge\tau,\ell_2)}^p
\le N+N\,\E\int_0^{t\wedge\tau}\|\widetilde Y_m(s)\|_{L_p}^p\,\ud s,
\]
and the proof is completed exactly as for $v_n$.
\end{proof}

%%%%%%%%%%%%%%%%%%%%%%%%%%%%%%%%%%%%%%%%%%%%%%%%%%%%%%

\section{Tail extinction and proof of Theorem \ref{thm:CSP_regular_initial}}
\label{sec:tail_extinction}

In Section \ref{subsec:splitting_construction} we introduced the splitting
\[
    u_0=u_0\rho(\cdot/n)+u_0(1-\rho(\cdot/n))
\]
and constructed the corresponding processes $v_n$ and $w_n$. The process $v_n$ starts from the compactly supported initial datum $u_0\rho(\cdot/n)$, whereas $w_n$ starts from the tail of $u_0$, i.e., $u_0(1-\rho(\cdot/n))$. The purpose of this section is to show that $w_n$ becomes extinct before any fixed positive time with probability tending to one as $n\to\infty$. This tail extinction estimate is the main probabilistic input in the proof of Theorem \ref{thm:CSP_regular_initial}.

In addition to the assumptions made at the beginning of Section \ref{sec:splitting_estimates}, we assume throughout this section that $u_0$ satisfies the moment condition (i) or (ii) of Theorem \ref{thm:CSP_regular_initial}. We continue to work on the filtered probability space of Proposition \ref{prop:existence_vw} carrying $(v_n,w_n)_{n\ge1}$, without distinguishing the copy of $(u_0,a,b,c)$ notationally. In particular, Lemma \ref{lem:uniform_bound} and Corollary \ref{cor:L_epsilon_bound_w_n} apply to $v_n$ and $w_n$.

\subsection{Extinction of the tail}
We first provide a one-dimensional extinction criterion for nonnegative continuous semimartingales, which is purely probabilistic and does not involve the hypotheses above. 
The following lemma is an analog of \cite[Proposition A.4]{KKMS23}.

\begin{lemma}\label{lem:extinction}
    Let $\alpha \in (0, 2)$. Suppose $X=(X_t)_{t\ge0}$ is a nonnegative continuous semimartingale
of the form $\ud X_t=C_t\,\ud t+\ud M_t$, where $(C_t)_{t\ge0}$ is a predictable process with $C_t\le\kappa X_t$ for all $t\ge0$ for some
constant $\kappa>0$, and $(M_t)_{t\ge0}$ is a continuous local
martingale with $M_0=0$. Then, we have that for every $A>0$, $t>0$, and $D>0$,
\[ 
\P\left( \inf_{0\leq s\leq t} X_s >0\,, X_{0} < D, \,  \int_0^t \frac{e^{-\kappa(2-\alpha) s }}{X_s^\alpha} \, \ud\langle M \rangle_s \geq A \  \right) \leq \frac{2D^{1-\frac{\alpha}{2}}}{\sqrt{A\left(1-\frac{\alpha}{2}\right)^2}}.
\] 
\end{lemma}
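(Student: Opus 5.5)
The plan is to apply Itô's formula to a concave power of a discounted version of $X$. Set $\beta:=1-\frac{\alpha}{2}\in(0,1)$ and $Y_s:=e^{-\kappa s}X_s$. Then $\ud Y_s=e^{-\kappa s}(C_s-\kappa X_s)\,\ud s+e^{-\kappa s}\ud M_s$. The drift is nonpositive by $C_s\le\kappa X_s$, so $Y$ is a nonnegative local supermartingale.

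Let $\tau:=\inf\{s: X_s=0\}$ and $\tau_\epsilon:=\inf\{s:X_s\le\epsilon\}$. On $[0,\tau_\epsilon]$, Itô's formula for $f(y)=y^\beta$ gives
\[
Y_{t\wedge\tau_\epsilon}^\beta
= Y_0^\beta+\int_0^{t\wedge\tau_\epsilon}\beta Y_s^{\beta-1}\,\ud Y_s
-\frac{\beta(1-\beta)}{2}\int_0^{t\wedge\tau_\epsilon} Y_s^{\beta-2}e^{-2\kappa s}\,\ud\langle M\rangle_s .
\]
Since $\beta-2=-\alpha/2-1$, a direct computation gives
\[
Y_s^{\beta-2}e^{-2\kappa s}=X_s^{-\alpha/2-1}e^{-\kappa s(1-\alpha/2)}.
\]
This is not yet the integrand $X_s^{-\alpha}e^{-\kappa(2-\alpha)s}$ in the statement, so I would instead apply Itô to $g(x)=x^{\beta'}$ with $\beta'=1-\alpha$. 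That choice fails when $\alpha\ge1$, so I expect the right approach to be different.

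The better route is to use $\beta=1-\alpha/2$ together with Cauchy–Schwarz to produce the square root in the bound. Write $Z:=Y^\beta$, which is a nonnegative local supermartingale up to $\tau$. Its martingale part is $\int\beta Y^{\beta-1}e^{-\kappa s}\ud M_s$, with quadratic variation
\[
\beta^2\int Y_s^{2\beta-2}e^{-2\kappa s}\ud\langle M\rangle_s=\beta^2\int X_s^{-\alpha}e^{-\kappa(2-\alpha)s}\ud\langle M\rangle_s,
\]
using $2\beta-2=-\alpha$. This matches the statement, and the constant $\beta^2=(1-\alpha/2)^2$ explains the factor in the denominator. So the event in question is that the nonnegative supermartingale $Z$ stays positive up to $t$, starts below $D^\beta$, and has martingale part $N$ with $\langle N\rangle_t\ge \beta^2A$.

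It remains to show that a nonnegative supermartingale $Z=Z_0+N-(\text{increasing})$ rarely accumulates a large quadratic variation. The Itô correction is negative, so $N_s\ge Z_s-Z_0\ge -Z_0$ before $\tau$. On the stated event, $N$ is therefore a continuous local martingale bounded below by $-D^\beta$ with $\langle N\rangle_t\ge\beta^2A$.

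By Dambis–Dubins–Schwarz, $N=B_{\langle N\rangle}$ for a Brownian motion $B$. Hence the event is contained in $\{B \text{ stays above } -D^\beta \text{ on }[0,\beta^2A]\}$, whose probability is
\[
\P\bigl(|B_{\beta^2 A}|\le D^\beta\bigr)\le \frac{2D^\beta}{\sqrt{2\pi\beta^2A}}\le\frac{2D^{1-\alpha/2}}{\sqrt{A(1-\alpha/2)^2}}.
\]
The main technical care is in two places. First, Itô's formula must be localized away from $0$ with $\tau_\epsilon$ and then $\epsilon\downarrow0$; on the event, $\inf_{s\le t} X_s>0$, so $\tau_\epsilon>t$ for small $\epsilon$. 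Second, one must embed in an enlarged probability space if $\langle N\rangle_\infty<\infty$. Both steps are standard.
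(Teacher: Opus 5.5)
Your argument is correct and is essentially the paper's proof. It applies It\^o's formula to $(e^{-\kappa t}X_t)^{\beta}$ with $\beta=1-\alpha/2$ and uses the lower bound $N\ge -X_0^{\beta}$ on the martingale part $N$. Since $\langle N\rangle_t=\beta^2\int_0^t e^{-\kappa(2-\alpha)s}X_s^{-\alpha}\,\ud\langle M\rangle_s$, Dambis--Dubins--Schwarz and the reflection principle finish the bound, and your constant $2D^{\beta}/\sqrt{2\pi\beta^2A}$ is even slightly sharper. The exploratory detour through the It\^o-correction term and $\beta'=1-\alpha$ can be deleted, and your $\tau_\epsilon$-localization (with $\epsilon\downarrow0$ along a sequence) makes rigorous a step the paper leaves implicit.
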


\begin{proof}
    Define 
    \[ \tau:=\inf\{t\geq 0: X_t =0\} \quad \text{and}\quad  Y_t:=e^{-\kappa t} X_t.\]
  By It\^o's formula, we have that for $t< \tau$,
\[ \ud Y_t =e^{-\kappa t} \left( C_t -\kappa X_t\right) \ud t + e^{-\kappa t } \ud M_t.\] 
Letting $\beta:=1-\alpha/2<1$ and applying It\^o's formula to $Y_t^\beta$, we have 
\[ Y_t^\beta \leq Y_0^\beta+ N_t,\]
where $(N_t)_{t\geq 0}$ is a continuous local martingale defined as 
\[ N_t:= \beta \int_0^t Y_s^{\beta-1} e^{-\kappa s} \ud M_s,\quad \text{for $t< \tau$}.\]
Thus, we have that for $t< \tau$
\[ \langle N \rangle_t = \beta^2 \int_0^t  Y_s^{2\beta-2} e^{-2\kappa s} \ud\langle M\rangle_s =  \beta^2   \int_0^t\frac{e^{-2\beta \kappa s}}{ X_s^\alpha} \,  \ud\langle M \rangle_s. \] 
By the Dubins-Schwarz theorem and the reflection principle, there exists a standard Brownian motion $B=(B_t)_{t\geq 0}$ (possibly on some enlarged probability space) such that  
\begin{align*}
 \P\left( \inf_{0\leq s\leq t} X_s >0\,, X_{0}<D\,, \,  \int_0^t\frac{e^{-2\beta \kappa s}}{ X_s^\alpha} \,  \ud\langle M \rangle_s \geq A \  \right) 
 & \leq \P\left( \inf_{0\leq s\leq t} N_s > -D^\beta\,, \langle N \rangle_t \geq \beta^2 A \right) \\
 & \leq \P\left( \inf_{0\leq s\leq t} B_{\langle N\rangle_s} > - D^\beta\,, \langle N \rangle_t \geq \beta^2 A \right)\\
 & \leq \P\left( \inf_{0\leq s\leq \beta^2 A} B_s > -D^\beta \right) \\
 &\leq \frac{2D^\beta}{\sqrt{A\beta^2}}, 
\end{align*}
which completes the proof. 
\end{proof}

We now show below that $w_n$ becomes extinct before any fixed positive time with probability tending to one as $n\to\infty$. \begin{lemma}
\label{lem:tail_extinction}
For every $t_*\in(0,T]$,
\[
    \lim_{n\to\infty}
    \bP\left(
        \inf_{0\le t\le t_*}
        \int_{\bR}w_n(t,x)\,\ud x>0
    \right)=0.
\]
\end{lemma}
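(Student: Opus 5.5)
We will apply Lemma \ref{lem:extinction} to $X_t:=M_n(t)=\int_{\bR}w_n(t,x)\,\ud x$. By \eqref{eq:mass_supermartingale_ineq} (more precisely, the decomposition behind it with $m\to\infty$), $X$ is a nonnegative continuous semimartingale with $\ud X_t=C_t\,\ud t+\ud M_t$, where $C_t=\int w_n L^*1\,\ud x\le K X_t$ and
\[
\ud\langle M\rangle_t=\int_{\bR}D_n(t,x)\,\ud x\,\ud t,\qquad D_n=\sigma^2(v_n+w_n)-\sigma^2(v_n).
\]
Fix $\alpha\in(\lambda,2)$ to be chosen. Given $\varepsilon>0$, we bound
\[
\bP\Bigl(\inf_{t\le t_*}X_t>0\Bigr)\le \bP(X_0\ge D)+\bP(E_K^c)+\bP\Bigl(\inf_{t\le t_*}X_t>0,\ X_0<D,\ \int_0^{t_*}\tfrac{e^{-\kappa(2-\alpha)s}}{X_s^\alpha}\ud\langle M\rangle_s\ge A\Bigr)+\bP(\text{rest}),
\]
where $E_K:=\{\sup_{t\le t_*,x}(v_n+w_n)\le K\}$. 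By Lemma \ref{lem:uniform_bound} and Markov, $\bP(E_K^c)\le N/K$ uniformly in $n$. Since $X_0=\int u_0(1-\rho(\cdot/n))\,\ud x\to0$ in probability (dominated convergence, using $\E\int u_0<\infty$), we may take $D=D_n\to0$, so the Lemma \ref{lem:extinction} term $2D_n^{1-\alpha/2}/\sqrt{A(1-\alpha/2)^2}$ tends to $0$ for fixed $A$. The remaining task is to show that on $E_K\cap\{\inf X>0\}$, the quantity $\int_0^{t_*}X_s^{-\alpha}\int D_n\,\ud x\,\ud s$ is at least $A$ with probability close to one, uniformly in $n$.

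On $E_K$, \eqref{eq:lambda_condition} gives $D_n\ge \bc_2(K)w_n^\lambda$. If $\lambda=1$, $\int D_n\ge \bc_2 X_s$ and we take $\alpha\in(1,2)$: then $\int_0^{t_*}X_s^{-\alpha}\int D_n\ge \bc_2\int_0^{t_*}X_s^{1-\alpha}\ud s\ge \bc_2 t_*\,(\sup_{s\le t_*}X_s)^{1-\alpha}$. Since $\sup_{s\le T}X_s$ is controlled in probability by $X_0$ via the maximal inequality \eqref{eq:weighted_mass_bound} (with $q=0$), and $X_0\to0$ in probability, $\sup_sX_s\to0$ in probability, so this lower bound exceeds any $A$ with probability tending to one. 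This case thus is essentially clean.

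For $\lambda\in(1,2)$ the main obstacle is to lower bound $\int w_n^\lambda$ by a power of $X=\int w_n$. I will interpolate: by H\"older with $\epsilon\in(\frac1{q+1},1)$, writing $1=\theta\lambda+(1-\theta)\epsilon$ with $\theta=\frac{1-\epsilon}{\lambda-\epsilon}$,
\[
X_s\le \Bigl(\int w_n^\lambda\Bigr)^{\theta}\Bigl(\int w_n^\epsilon\Bigr)^{1-\theta},\quad\text{so}\quad \int w_n^\lambda\ge X_s^{1/\theta}\Bigl(\int w_n^\epsilon\Bigr)^{-(1-\theta)/\theta}.
\]
By Corollary \ref{cor:L_epsilon_bound_w_n}, $\sup_s\int w_n(s)^\epsilon\le N\,W_q(w_n(0))^\epsilon$ in expectation, and $W_q(w_n(0))\to0$ in probability by the moment assumption, so this factor is small (hence its negative power large). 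We need $\alpha>1/\theta=\frac{\lambda-\epsilon}{1-\epsilon}$ with $\alpha<2$, i.e.\ $\epsilon<2-\lambda$; compatibility with $\epsilon>\frac1{q+1}$ is exactly $q>\frac{\lambda-1}{2-\lambda}$. Then $X_s^{-\alpha}\int D_n\ge \bc_2 X_s^{1/\theta-\alpha}(\sup\int w_n^\epsilon)^{-(1-\theta)/\theta}$, with $1/\theta-\alpha<0$, and as before $\sup X_s\to0$ in probability makes this exceed $A$ on $[0,t_*]$ with probability tending to one. Letting $n\to\infty$, then $A\to\infty$, then $K\to\infty$ concludes.
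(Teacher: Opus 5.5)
Your proof is correct. It uses the same skeleton as the paper: Lemma \ref{lem:extinction}, the $L^\infty$ good event from Lemma \ref{lem:uniform_bound}, and the same H\"older interpolation. Your $\epsilon$ is the paper's $r=(\alpha-\lambda)/(\alpha-1)$, and it gives the same threshold $q>\frac{\lambda-1}{2-\lambda}$. Where you differ is in how you bound $\int_0^{t_*}e^{-\kappa(2-\alpha)s}X_s^{-\alpha}\,\ud\langle M\rangle_s$ from below.

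The paper takes $\alpha$ \emph{equal} to the exponent in $\ud\langle N_n\rangle_s\ge c\,M_n(s)^{\alpha}\,\ud s$, so $\alpha=1$ when $\lambda=1$. For $\lambda>1$ it adds the event $\fB_K^{(n)}=\{\sup_{s\le t_*}\int w_n^r\le K\}$. On the good events the integral is then at least a deterministic constant $A_{K,t_*}$, and all smallness comes from $D\downarrow0$ via $\bE M_n(0)\to0$. So the paper needs only $L_1(\Omega)$-smallness of $M_n(0)$ and uniform-in-$n$ boundedness of $\sup_s\int w_n^r$.

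You instead take $\alpha$ strictly larger than that exponent. You then use that $\sup_s X_s\to0$ and $\sup_s\int w_n(s)^\epsilon\to0$ in probability, via \eqref{eq:weighted_mass_bound} and Corollary \ref{cor:L_epsilon_bound_w_n} with $W_q(w_n(0))\to0$ in $L_1(\Omega)$. This makes the integral diverge in probability. That is valid, but it uses more than needed and double-counts the smallness. Once the integral diverges you could keep $D$ fixed and send $A\to\infty$. Conversely, with $D_n\to0$, a fixed $A$ together with the matching exponent already suffices.

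Two minor remarks:
\begin{enumerate}
\item Your lower bound drops the factor $e^{-\kappa(2-\alpha)s}$. This is harmless, since that factor is at least $e^{-\kappa(2-\alpha)t_*}$.
\item Your exact decomposition with $C_t=\int w_n L^*1\,\ud x$, justified by $L^*\zeta_m\to L^*1$ boundedly, is slightly sharper than the paper's inequality $\ud M_n\le K_0M_n\,\ud t+\ud N_n$. It is also what Lemma \ref{lem:extinction} literally assumes.
\end{enumerate}
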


\begin{proof}
Set
\[
    M_n(t):=\int_{\bR}w_n(t,x)\,\ud x
\]
and
\[
    D_n(t,x):=
   \sigma^2(w_n(t,x)+v_n(t,x))-\sigma^2(v_n(t,x)).
\]
By Lemma \ref{lem:mtg_bound_w}, $M_n(t)<\infty$ for every
$t\in[0,T]$ almost surely, and there exists $K_0=K_0(\nu)>0$ such that
\[
    \ud M_n(t)\le K_0M_n(t)\,\ud t+\ud N_n(t),
\]
where $N_n(t):= \int_0^t\int_{\bR}\sqrt{D_n(s,x)}\,\xi_2(\ud s,\ud x)$ is a continuous local martingale with quadratic variation
\[
    \langle N_n\rangle_t
    =
    \int_0^t\int_{\bR}D_n(s,x)\,\ud x\,\ud s.
\]
Fix $t_*\in(0,T]$ and split the proof into the cases $\lambda=1$ and $\lambda\in(1,2)$.

\medskip
\noindent
\textbf{Case 1: $\lambda=1$.}
For $K\ge1$, define the event 
\begin{equation}\label{eq:A_K}
    \fA_K^{(n)}
    :=
    \left\{
        \sup_{\substack{0\le s\le t_*\\x\in\bR}}
        v_n(s,x)\le K
    \right\}
    \cap
    \left\{
        \sup_{\substack{0\le s\le t_*\\x\in\bR}}
        w_n(s,x)\le K
    \right\}.
\end{equation}
On $\fA_K^{(n)}$, we have $0\le v_n\le v_n+w_n\le2K$. Therefore, by
Assumption \ref{asp:sigma}(iii), there exists $c_K>0$ such that
\[
    D_n(s,x)
    \ge
    c_K w_n(s,x),
    \qquad 0\le s\le t_*.
\]
Consequently, on $\fA_K^{(n)}$,
\[
    \frac{\ud}{\ud s}\langle N_n\rangle_s
    =
    \int_{\bR} D_n (s,x)\,\ud x
    \ge
    c_KM_n(s).
\]
Hence, on the event
\[
    \left\{\inf_{0\le s\le t_*}M_n(s)>0\right\}\cap \fA_K^{(n)},
\]
we have
\[
\begin{aligned}
    \int_0^{t_*}
    \frac{e^{-K_0s}}{M_n(s)}\,\ud\langle N_n\rangle_s
    &\ge
    c_K\int_0^{t_*}e^{-K_0s}\,\ud s
    =: A_{K,t_*}>0 .
\end{aligned}
\]
Therefore, for every $D>0$, Lemma \ref{lem:extinction} with $\alpha=1$ gives
\[
\begin{aligned}
&\bP\left(
    \left\{\inf_{0\le s\le t_*}M_n(s)>0\right\}
    \cap \fA_K^{(n)}
\right)                                                   \\
&\quad\le
\bP\left(
    \inf_{0\le s\le t_*}M_n(s)>0,\,
    M_n(0)<D,\,
    \int_0^{t_*}
    \frac{e^{-K_0s}}{M_n(s)}\,\ud\langle N_n\rangle_s
    \ge A_{K,t_*}
\right)
+
\bP(M_n(0)\ge D)                                    \\
&\quad\le
\frac{4D^{1/2}}{\sqrt{A_{K,t_*}}}
+
\frac{\E[M_n(0)]}{D}.
\end{aligned}
\]
On the other hand, Lemma \ref{lem:uniform_bound} shows that there exists $N>0$, independent of $n$, such that 
\[
    \bP\bigl((\fA_K^{(n)})^c\bigr)
    \le
    \frac{N}{K}.
\]
Thus, we have that 
\[
\begin{aligned}
\bP\left(
    \inf_{0\le s\le t_*}M_n(s)>0
\right)
&\le
\frac{4 D^{1/2}}{\sqrt{A_{K,t_*}}}
+
\frac{\E[M_n(0)]}{D}
+
\frac{N}{K}.
\end{aligned}
\]
Since $M_n(0)=\int_{\bR}u_0(x)(1-\rho(x/n))\,\ud x \longrightarrow0$ in $L_1(\Omega)$, we first let $n\to\infty$, then $D\downarrow0$, and finally
$K\to\infty$. This proves the claim in the case $\lambda=1$.

\noindent
\textbf{Case 2: $\lambda\in(1,2)$.}
By assumption we may choose $q>\frac{\lambda-1}{2-\lambda}$, so that
$1<\lambda+\frac{\lambda-1}{q}<2$ and we may fix
\[
    \alpha\in\left(\lambda+\frac{\lambda-1}{q},\,2\right)
    \qquad\text{and set}\qquad
    r:=\frac{\alpha-\lambda}{\alpha-1} .
\]
Then $\alpha\in(1,2)$ and $r\in\bigl(\frac1{q+1},1\bigr)$, the latter being
exactly the range in which Corollary \ref{cor:L_epsilon_bound_w_n} applies.

Since the interpolation requires control of $\int_\bR w_n^r\,\ud x$ as well, we
keep the event $\fA^{(n)}_K$ of \eqref{eq:A_K} and define
\[
    \fB_K^{(n)}
    :=
    \left\{
        \sup_{0\le s\le t_*}
        \int_{\bR}w_n(s,x)^r\,\ud x
        \le K
    \right\}.
\]
On $\fA_K^{(n)}$, Assumption \ref{asp:sigma}(iii) gives
\[
    D_n (s,x)\ge c_K w_n(s,x)^\lambda.
\]
Moreover, by H\"older's inequality,
\[
\begin{aligned}
    M_n(s)
    =
    \int_{\bR}w_n(s,x)\,\ud x
    &\le
    \left(
        \int_{\bR}w_n(s,x)^\lambda\,\ud x
    \right)^{1/\alpha}
    \left(
        \int_{\bR}w_n(s,x)^r\,\ud x
    \right)^{1-1/\alpha}.
\end{aligned}
\]
On $\fB_K^{(n)}$, this implies that 
\[
    \int_{\bR}w_n(s,x)^\lambda\,\ud x
    \ge
    K^{1-\alpha}M_n(s)^\alpha .
\]
Hence, on $\fA_K^{(n)}\cap \fB_K^{(n)}$, 
\[
    \frac{\ud}{\ud s}\langle N_n\rangle_s
    =
    \int_{\bR}D_n (s,x)\,\ud x
    \ge
    c_KK^{1-\alpha}M_n(s)^\alpha .
\]
Consequently, on the event $\left\{\inf_{0\le s\le t_*}M_n(s)>0\right\} \cap \fA_K^{(n)}\cap \fB_K^{(n)}$, 
we have
\[
    \int_0^{t_*}
    \frac{e^{-K_0(2-\alpha)s}}{M_n(s)^\alpha}\,
    \ud\langle N_n\rangle_s
    \ge
    c_KK^{1-\alpha}
    \int_0^{t_*}e^{-K_0(2-\alpha)s}\,\ud s=: A_{K,t_*}^{(\alpha)}>0.
\]
Using Lemma \ref{lem:extinction}, we get that  for every $D>0$,
\[
\bP\left(\left\{\inf_{0\le s\le t_*}M_n(s)>0\right\}\cap \fA_K^{(n)}\cap \fB_K^{(n)}\right)                                                   
\quad\le
\frac{2D^{1-\alpha/2}}
{\sqrt{A_{K,t_*}^{(\alpha)}}\left(1-\frac{\alpha}{2}\right)}
+ \frac{\E[M_n(0)]}{D}.
\]
By Lemma \ref{lem:uniform_bound}, we have 
\[
    \bP\bigl((\fA_K^{(n)})^c\bigr)
    \le
    \frac{N}{K}.
\]
In addition, since $W_q(w_n(0))\le W_q(u_0)$, Corollary \ref{cor:L_epsilon_bound_w_n} implies that 
\[
    \bP\bigl((\fB_K^{(n)})^c\bigr)
    \le
    \frac{1}{K}
    \E\left[
        \sup_{0\le s\le t_*}
        \int_{\bR}w_n(s,x)^r\,\ud x
    \right]
    \le
    \frac{N}{K},
\]
where $N$ is independent of $n$. Therefore,
\[
\begin{aligned}
\bP\left(
    \inf_{0\le s\le t_*}M_n(s)>0
\right)
&\le
\frac{2D^{1-\alpha/2}}
{\sqrt{A_{K,t_*}^{(\alpha)}}\left(1-\frac{\alpha}{2}\right)}
+
\frac{\E[M_n(0)]}{D}
+
\frac{N}{K}.
\end{aligned}
\]
Since $M_n(0)= \int_{\bR}u_0(x)(1-\rho(x/n))\,\ud x \longrightarrow0$ in $L_1(\Omega)$, we first let $n\to\infty$, then $D\downarrow0$, and finally
$K\to\infty$. This proves the claim in the case $\lambda\in(1,2)$, which completes the proof. 
\end{proof}

\subsection{Proof of Theorem \ref{thm:CSP_regular_initial}} \label{subsec:CSP_proof_regular}

Now we are ready to prove Theorem \ref{thm:CSP_regular_initial}.

\begin{proof}[Proof of Theorem \ref{thm:CSP_regular_initial}]
Let $u$ be an arbitrary nonnegative solution of \eqref{eq:SPDE} with
initial function $u_0$, on any filtered probability space as in
Section \ref{sec:main_results}. For $n\ge1$, let
$y_n:=v_n+w_n$ be the process from Lemma~\ref{lem:summing}, which is
a nonnegative solution of \eqref{eq:SPDE} on the filtered
probability space of Proposition~\ref{prop:existence_vw}, driven by
the space-time white noise $\xi^{(n)}$, with initial function
$\bar u_0$ and coefficient field $(\bar a,\bar b,\bar c)$. Since
$\mathcal L(\bar u_0,\bar a,\bar b,\bar c)
=\mathcal L(u_0,a,b,c)$, the uniqueness-in-law assumption of
Theorem~\ref{thm:CSP_regular_initial} yields
\begin{equation}\label{eq:law_identify}
    \mathcal L(u)=\mathcal L(y_n)
    \quad\text{on } C([0,T];C_{\rm tem})
    \qquad\text{for every } n\ge1.
\end{equation}
For $m\ge 1$ and $R\ge1$, define the path event
\[
    \fC_{m,R}:=\{ f\in C([0,T] ; \Ctem)\, : \, f(t,x) = 0 \text{ for all $t\in[T/(2m),T]$ and all $x$ such that $|x|\ge R$} \}.
\] 
By continuity, $\fC_{m,R}$ is measurable since it can be written as a countable intersection of measurable sets.

Our goal is to show 
\[
     \P\left(u\in\bigcap_{m=1}^\infty\bigcup_{R=1}^\infty\fC_{m,R}\right)=1, \qquad\text{equivalently}\qquad \lim_{m \to \infty } \P\left(u\in\bigcup_{R=1}^\infty\fC_{m,R}\right)=1,
\] 
where the equivalence follows from the continuity of probability along the
decreasing intersection over $m\ge 1$. By \eqref{eq:law_identify}, for
every $n\ge1$,
\[ 
    \P\left(u\in\bigcup_{R=1}^\infty\fC_{m,R}\right)= \P\left(y_n\in\bigcup_{R=1}^\infty\fC_{m,R}\right). 
\]
Hence it suffices to show that, for every $m\ge 1$ 
\begin{equation}\label{eq:CSP_goal}
    \lim_{n\to\infty}
    \P\left(y_n\in\bigcup_{R=1}^\infty\fC_{m,R}\right)=1.
\end{equation}

We first consider  the $v_n$-part. Since $v_n(0)=u_0\rho(\cdot/n)$ has compact
support and satisfies the initial-data condition of
\cite[Assumption 2.6]{han2023compact} by the Sobolev--H\"older embedding
(Lemma~\ref{lem:prop_of_bessel_space}\eqref{sobolev-embedding}), applying
\cite[Theorem 2.11]{han2023compact} to $v_n$ gives that for every $n,m\ge 1$,
\begin{equation}\label{eq:V_n_full}
    % \P\bigl(v_n(t,x)= 0 \text{ for every $t\in[0,T]$ and $x\in \R$ such that $|x|>R$ for some $R\in \N$}\bigr)=1.
     \P\left(v_n\in\bigcup_{R=1}^\infty\fC_{m,R}\right)=1.
\end{equation} 

Next, set $M_n(t):=\int_{\bR}w_n(t,x)\,\ud x$, which is finite for all
$t\in[0,T]$ almost surely, and recall from Lemma \ref{lem:mtg_bound_w} that
$\bigl(e^{-Kt}M_n(t)\bigr)_{t\in[0,T]}$ is a nonnegative continuous
supermartingale. We claim that 
\begin{equation}\label{eq:extinction_absorbing}
   \fD_{n,m}:=\left\{\inf_{0\le t\le 1/m}M_n(t)=0\right\}
    \subseteq
    \left\{w_n(t)\equiv0\text{ for every }t\in[1/m,T]\right\}
    \quad\text{up to a null set}.
\end{equation}
Indeed, let $\theta_n:=\inf\{t \geq 0:M_n(t)=0\}\wedge T/(2m)$ be a stopping time. Then, the continuity of $M_n$ yields  $M_n(\theta_n)=0$ on $\fD_{n,m}$. Moreover, optional sampling for the nonnegative supermartingale $e^{-Kt}M_n(t)$ gives, for
$t\ge\theta_n$,
\[
    \E\bigl[e^{-Kt}M_n(t)\mid\mathcal F_{\theta_n}\bigr]
    \le
    e^{-K\theta_n}M_n(\theta_n)=0, \qquad \text{on $\fD_{n,m}$.}
\]
Since $\theta_n\le T/(2m)$, this gives, for each fixed $t\in[T/(2m),T]$, that
$M_n(t)=0$ almost surely on $D_{n,m}$. Using the continuity of $t\mapsto M_n(t)$, we conclude that almost surely on $D_{n,m}$ we have $M_n(t)=0$ for all $t\in[T/(2m),T]$. As
$w_n\ge0$ is continuous, $M_n(t)=0$ forces $w_n(t,\cdot)\equiv0$.

On the intersection of the events in \eqref{eq:V_n_full} and
\eqref{eq:extinction_absorbing}, we have $y_n(t)=v_n(t)$ for every
$t\in[T/(2m),T]$, so that $y_n(t)$ has compact support for all such $t$.
Consequently, for every $m\ge 1$
\[
    \P\left(\inf_{0\le t\le T/(2m)}M_n(t)=0\right) \le 
    \P\left(y_n\in\bigcup_{R=1}^\infty\fC_{m,R}\right),
\]
and the left-hand side tends to $1$ as $n\to\infty$ by
Lemma \ref{lem:tail_extinction}. This proves \eqref{eq:CSP_goal} and completes
the proof.
\end{proof}

%%%%%%%%%%%%%%%%%%%%%%%%%%%%%%%%%%%%%%%%%%%%%%%%%%%%%%%%%%%%%%%%%%%%%%%%%%%%%%%%%%%%%%%%%%%%%%%%

\section{Construction from measure-valued initial data}
\label{sec:construction-measure}

The goal of this section is to construct a nonnegative solution starting from measure-valued initial data. Throughout this section, Assumptions~\ref{asp:sigma} and
\ref{asp:coefficients} are in force. We let $\mu_0$ be a
nonnegative $\mathcal F_0$-measurable random Radon measure on $\bR$, independent of the driving white noise, such that
\[
    \mu_0\in
    L_2\bigl(\Omega;H_2^{-1/2-\kappa}(\bR)\bigr)
    \qquad
    \text{for every }\kappa\in(0,1/2).
\]
The additional weighted-moment assumptions from
Theorem \ref{thm:CSP_measure_initial} will only be used later in Section \ref{sec:proof_thm_measure}.

We first approximate both the coefficient $\sigma$ and the initial measure
$\mu_0$, and obtain a family of regularized solutions $(u_n)_{n\ge1}$.
We then prove uniform localized estimates, establish tightness, and
identify every subsequential limit as a solution of
\eqref{eq:SPDE} with initial measure $\mu_0$.

\subsection{Approximation and uniform estimates}
\label{subsec:approximation-uniform-estimates}
We begin with the approximation scheme. Let
$\psi\in C_c^\infty(\bR)$ be symmetric and satisfy
\[
    0\le \psi\le1,\qquad
    \psi(z)=1\text{ for }|z|\le1,\qquad
    \psi(z)=0\text{ for }|z|\ge2.
\]
Set
\[
    \psi_n(z):=\psi(z/n).
\]
Let $\Theta\in C_c^\infty(\bR)$ be a nonnegative mollifier such that
\[
    \int_{\bR}\Theta(z)\,\ud z=1,
    \qquad
    \operatorname{supp}\Theta\subset[0,1].
\]
For $n\ge1$, define
\begin{equation}
\label{def:sigma-n}
    \sigma_n(u)
    :=
    n\int_{\bR}\sigma(z)\Theta(n(u-z))\,\ud z\,\psi_n(u)
    =
    \int_0^1\sigma(u-z/n)\Theta(z)\,\ud z\,\psi_n(u).
\end{equation}
By Assumption \ref{asp:sigma}, we have
\begin{equation}
\label{eq:sigma-n-properties}
    \sigma_n(0)=0,\qquad
    |\sigma_n(u)|\le N(u^\gamma+u),\qquad u\ge0,
\end{equation}
where $N$ is independent of $n$. Moreover, $\sigma_n$ is Lipschitz continuous, with
a Lipschitz constant depending on $n$, and
\[
    \sigma_n\to\sigma
    \qquad\text{uniformly on compact subsets of }[0,\infty).
\]
We approximate the initial measure by
\[
    u_{n,0}(x)
    :=
    n\int_{\bR}\Theta(n(x-y))\,\mu_0(\ud y),
    \qquad x\in\bR.
\]Note that the initial datum $u_{n,0}$ is a nonnegative
smooth function. 

Consider the regularized equation
\begin{equation}
\label{eq:regularized-measure-equation}
\begin{aligned}
    \partial_t u_n(t,x)
    &=
    L u_n(t,x)+\sigma_n(u_n(t,x))\xi(t,x),
    \qquad (t,x)\in(0,T]\times\bR,\\
    u_n(0,\cdot)
    &=
    u_{n,0}.
\end{aligned}
\end{equation}

We shall use the exponential weight $\zeta$ and $\zeta_m$ introduced in \eqref{eq:zeta}. Recall that \eqref{eq:zeta_bounds} holds uniformly in $m$. 

\begin{lemma}
\label{lem:regularized_solutions}
For each $n\ge1$, equation \eqref{eq:regularized-measure-equation} has a unique
nonnegative solution $u_n$. Moreover, for every bounded stopping time
$\tau\le T$, every $p\ge2$, and every $\kappa\in(0,1/2)$,
\[
    u_n\in\mathcal H_p^{1/2-\kappa}(\tau).
\]
In addition, if $(1-2\kappa)p>6$ and
\[
    \frac1p<\ba<\bb<
    \frac14-\frac{\kappa}{2}-\frac1{2p},
\]
then
\[
    u_n\in
    C^{\ba -1/p}
    \bigl([0,\tau];C^{1/2-\kappa-2\bb-1/p}(\bR)\bigr)
    \qquad\text{a.s.}
\]
Furthermore, for every $\rho>0$, there exists $N_n= N(\nu,\kappa,p, \ba, \bb, \bc_1 ,T, n)$ such that 
\begin{equation}
\label{eq:regularized_holder_weighted}
    \E
    \left\|
        u_n\zeta_{\rho/p}
    \right\|_{C^{\ba -1/p}
    ([0,\tau];C^{1/2-\kappa-2\bb -1/p})}^p
    \le
    N_n+
    N_n\|\mu_0\zeta_{\rho/p}\|_{U_p^{1/2-\kappa}}^p.
\end{equation} Consequently,
\begin{equation}
\label{eq:regularized_tempered_bound}
    \E
    \left[
        \sup_{t\le\tau,\ x\in\bR}
        u_n(t,x)^p e^{-\rho|x|}
    \right]
    \le N_n.
\end{equation}
\end{lemma}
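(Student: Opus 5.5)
Since $\sigma_n$ is Lipschitz (with an $n$-dependent constant), vanishes at $0$, and has linear growth, equation \eqref{eq:regularized-measure-equation} falls in the Lipschitz theory for SPDEs with random operator $L$. I would first verify that $u_{n,0}$ lies in the right weighted space: for every $p\ge2$ and every weight $\zeta_{\rho/p}$, $u_{n,0}\zeta_{\rho/p}\in U_p^{1/2-\kappa}$ with
\[
\|u_{n,0}\zeta_{\rho/p}\|_{U_p^{1/2-\kappa}}\le N_n\bigl(1+\|\mu_0\zeta_{\rho/p}\|_{U_p^{1/2-\kappa}}\bigr).
\]
To see this, write $u_{n,0}=\Theta_n*\mu_0$ with $\Theta_n(x)=n\Theta(nx)$, commute the weight through the convolution (the commutator costs a factor $e^{N/n\cdot|\cdot|}$-type bound because $\zeta'/\zeta$ is bounded and $\supp\Theta_n\subset[0,1/n]$), and use that convolution with $\Theta_n$ maps $H_p^{-1/2-\kappa}$ (or even $H_2^{-1/2-\kappa}$, via Sobolev embedding) into $H_p^{1/2-\kappa}$ with an $n$-dependent constant. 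Existence, uniqueness and nonnegativity of a solution in $\mathcal H_p^{1/2-\kappa}(\tau)$, localized by $\zeta_m$ to handle the nonintegrable growth at infinity, then follow from Krylov's theory for Lipschitz noise (\cite[Theorem 5.1, Section 8]{kry1999analytic}) combined with the comparison/positivity results used in \cite{han2023compact}. I would take these as given, noting that in dimension one, white noise is admissible exactly because $1/2-\kappa<1/2$ (Lemma \ref{lem:white_noise_estimate} with $\eta=1/2-\kappa$).

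For the estimate \eqref{eq:regularized_holder_weighted}, I would repeat the localization argument from Lemma \ref{lem:uniform_bound}, now with weight $\zeta_{\rho/p}$ in place of $\zeta_m$. The process $Y:=u_n\zeta_{\rho/p}$ solves
\[
\ud Y=\bigl[LY-\{2a(u_n\zeta')_x-au_n\zeta''+bu_n\zeta'\}\bigr]\ud t+\sigma_n(u_n)\zeta\,\xi(\ud t,\ud x).
\]
Using \eqref{eq:zeta_bounds}, the drift correction is bounded by $N\|Y\|_{L_p}$ in $H_p^{\eta-2}$. Lemma \ref{lem:white_noise_estimate} together with \eqref{eq:sigma-n-properties} bounds the noise term by
\[
N\,\E\int\!\!\int\bigl(u_n^{p\gamma}+u_n^p\bigr)\zeta^p\,\ud x\,\ud s.
\]
Here the estimates need not be uniform in $n$, so the sublinear term is handled crudely: $u_n^{p\gamma}\zeta^p\le \zeta^p+(u_n\zeta)^p$, where $\zeta^p\in L_1$ and $\zeta\le1$. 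This yields
\[
\|Y\|_{\mathcal H_p^\eta(t\wedge\tau)}^p\le N+N\|Y(0)\|^p_{U_p^\eta}+N\int_0^t\|Y\|^p_{\mathcal H_p^\eta(s\wedge\tau)}\,\ud s.
\]
Gronwall's inequality, together with the initial-data bound above, then gives the $\mathcal H_p^{1/2-\kappa}$ estimate. Theorem \ref{thm:embedding_theorems}\eqref{embedding in time and space} with $\eta=1/2-\kappa$ converts it into the Hölder bound \eqref{eq:regularized_holder_weighted}. This is possible because the condition $(1-2\kappa)p>6$ makes $\eta p>3$, so the stated range of $\ba,\bb$ is nonempty. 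Since $\zeta_{\rho/p}^p\ge c\,e^{-\rho|x|}$, taking the supremum gives \eqref{eq:regularized_tempered_bound}, and finiteness follows once the right side of \eqref{eq:regularized_holder_weighted} is finite.

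The main obstacle is the initial-data step: $\mu_0$ is controlled only in $L_2(\Omega;H_2^{-1/2-\kappa})$, while the right side of \eqref{eq:regularized_holder_weighted} involves a $U_p$-norm. I would show that $\|\mu_0\zeta_{\rho/p}\|_{U_p^{1/2-\kappa}}$, interpreted as the weighted norm of the mollified datum, is finite by bounding $\|\Theta_n*\mu_0\|_{H_p^{1/2-\kappa}}\le N_n\|\mu_0\|_{H_2^{-1/2-\kappa}}$ pathwise. If only second moments are available, I would condition on $\mathcal F_0$, using independence of $\mu_0$ from the noise, to get the estimate with $\E[\cdot\mid\mathcal F_0]$ and then absorb. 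A secondary point is justifying the identification $Y=u_n\zeta$ via uniqueness in $\mathcal H_p^0$, which I would handle exactly as in Lemma \ref{lem:uniform_bound}.
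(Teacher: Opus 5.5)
Your route is the paper's. The paper disposes of this lemma by citing the standard Lipschitz theory (\cite[Lemma A.7]{han2023compact}), and your weighted localization, Gronwall argument and appeal to Theorem~\ref{thm:embedding_theorems}\eqref{embedding in time and space} are the content of that citation; those steps are fine. Two small remarks. Since $\sigma_n$ is globally Lipschitz with $\sigma_n(0)=0$, you have $|\sigma_n(u)|\le N_n|u|$, so the $\zeta_m$-localization for existence is unnecessary. Also, the inequality $\zeta_{\rho/p}^p\ge c\,e^{-\rho|x|}$ requires reading the weight as $\Psi_{\rho/p}=\zeta(\rho\,\cdot/p)$, not as $\zeta(p\,\cdot/\rho)$.

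The gap is your final step, ``condition on $\mathcal F_0$ \dots\ and then absorb''. Conditioning (equivalently, multiplying the equation by $\mathbf 1_A$ with $A\in\mathcal F_0$, which is legitimate since $\sigma_n(0)=0$) yields $\E[\,\cdot\,|\,\mathcal F_0]\le N_n\bigl(1+\|\mu_0\|_{H_2^{-1/2-\kappa}}^p\bigr)$. There is nothing to absorb this into: taking expectations brings back $\E\|\mu_0\|^p_{H_2^{-1/2-\kappa}}$, and the hypothesis $\mu_0\in L_2(\Omega;H_2^{-1/2-\kappa})$ does not control it.

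This cannot be repaired, because for $p>2$ the unconditional assertions are false. Take $\mu_0=X\delta_0$ with $X\ge0$ $\mathcal F_0$-measurable, $\E X^2<\infty$ and $\E X^p=\infty$; this is admissible since $\delta_0\in H_2^{-1/2-\kappa}$. Then $u_{n,0}=X\Theta_n\notin U_p^{1/2-\kappa}$, so $u_n\notin\mathcal H_p^{1/2-\kappa}(\tau)$. Moreover $\sup_x u_n(0,x)^pe^{-\rho|x|}\ge c_nX^p$ has infinite expectation, contradicting \eqref{eq:regularized_tempered_bound}. For such $\mu_0$ the right side of \eqref{eq:regularized_holder_weighted} is also infinite as written, since $\delta_0\notin H_p^{1/2-\kappa-2/p}$ when $p>6$. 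So your reading with $u_{n,0}$ in place of $\mu_0$ is the meaningful one.

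What your argument does prove is a localized version. Set $A_k:=\{\|\mu_0\|_{H_2^{-1/2-\kappa}}\le k\}\in\mathcal F_0$. The process $\mathbf 1_{A_k}u_n$ is the solution started from $\mathbf 1_{A_k}u_{n,0}$, belongs to $\mathcal H_p^{1/2-\kappa}(\tau)$, and satisfies \eqref{eq:regularized_holder_weighted}--\eqref{eq:regularized_tempered_bound} with constant $N_n(1+k^p)$. Letting $k\to\infty$ gives the almost sure H\"older regularity. You should state the conclusion in that form. The membership claim holds unconditionally for $p=2$, and everything holds as stated if $\E\|\mu_0\|^p_{H_2^{-1/2-\kappa}}<\infty$.
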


\begin{proof}
This follows from the standard existence theorem for SPDEs with Lipschitz noise
coefficient and smooth initial datum (see e.g. \cite[Lemma A.7]{han2023compact}). 
\end{proof}

The next estimate is a localized Sobolev bound used for compactness.

\begin{lemma}
\label{lem:uniform_localized_sobolev}
Let $\kappa\in(0,1/2)$ and set $s:=\frac12 -\kappa$. Assume that
\[
    \mu_0\in U_{2}^{s} = L_2\bigl(\Omega;H_2^{-1/2-\kappa}(\bR)\bigr).
\]
Then, for every $m\ge1$,
\begin{equation}\label{eq:localized_H2_estimate}
    \sup_{n\ge1}
    \|u_n\zeta_m\|_{\cH_2^s(T)}^2
    \le
    N\|\mu_0\|_{U_2^s}^2
    +
    N\int_{\bR}\zeta_m(x)^2\,\ud x,
\end{equation}
where $N=N(\nu,\kappa,\bc_1,T)$ is independent of $n$ and $m$. Moreover, if
$\eta\in\bR$ and $\ep\in(0,s)$ satisfy $\eta\le s-\ep$, then for every
$\delta\in(0,T)$,
\begin{equation}\label{eq:localized_small_time_estimate}
    \sup_{n\ge1}
    \|u_n\zeta_m\|_{\bH_2^\eta(\delta)}^2
    \le
    N\delta^{\ep}
    \left(
        \|\mu_0\|_{U_2^s}^2
        +
        \int_{\bR}\zeta_m(x)^2\,\ud x
    \right).
\end{equation}
\end{lemma}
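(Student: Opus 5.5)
We apply Krylov's $L_2$-estimate (Theorem \ref{thm:krylov_lp_estimate} with $p=2$, $\eta=s$) to the localized process $Y:=u_n\zeta_m$, exactly as in the proof of Lemma \ref{lem:uniform_bound}. Since $u_n$ solves \eqref{eq:regularized-measure-equation}, $Y$ satisfies
\[
\ud Y=\bigl[LY-\{2a(u_n\zeta_m')_x-au_n\zeta_m''+bu_n\zeta_m'\}\bigr]\ud t+\sigma_n(u_n)\zeta_m\,\xi(\ud t,\ud x),\qquad Y(0)=u_{n,0}\zeta_m .
\]
By Lemma \ref{lem:regularized_solutions}, $Y$ lies in $\cH_2^s(\tau)$ for each $n$, so the a priori estimate is legitimate and its constant $N(\nu,\kappa,T)$ is independent of $n,m$. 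We then bound the three free terms one at a time.

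\textbf{Initial datum.} Write $u_{n,0}=\mu_0*\Theta_n$ with $\Theta_n(x)=n\Theta(nx)$. Lemma \ref{lem:prop_of_bessel_space}\eqref{convolution} gives $\|u_{n,0}\|_{H_2^{s-1}}\le\|\mu_0\|_{H_2^{s-1}}$, and $s-1=-1/2-\kappa$. Multiplication by $\zeta_m$ is bounded on $H_2^{s-1}$ uniformly in $m$ by Lemma \ref{lem:prop_of_bessel_space}\eqref{pointwise_multiplier}, since $\|\zeta_m\|_{C^2}$ is uniformly bounded. Hence $\|Y(0)\|_{U_2^s}\le N\|\mu_0\|_{U_2^s}$.

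\textbf{Drift correction.} As in \eqref{eq:drift}, using Bessel kernels $R_{1-s},R_{2-s}\in L_1$ and $|\zeta_m'|+|\zeta_m''|\le N\zeta_m$, this term is at most $N\,\E\int_0^t\|Y\|_{H_2^{s-2}}^2\,\ud r$. In fact it is controlled by the $L_2$ norm, and we may even reduce to $N\,\E\int_0^t\|Y\|_{L_2}^2$.

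\textbf{Noise.} Lemma \ref{lem:white_noise_estimate} with $p=2$ (using $s<1/2$) and the growth bound \eqref{eq:sigma-n-properties} give
\[
\|\sigma_n(u_n)\zeta_m\mathbf e\|^2_{\bH_2^{s-1}(t,\ell_2)}\le N\,\E\int_0^t\!\!\int_{\bR}(u_n^{2\gamma}+u_n^2)\zeta_m^2\,\ud x\,\ud r .
\]
Young's inequality $u^{2\gamma}\le 1+u^2$ then gives a bound $N\int\zeta_m^2\,\ud x+N\,\E\int_0^t\|Y\|_{L_2}^2$. This is where the additive term $\int\zeta_m^2$ in \eqref{eq:localized_H2_estimate} comes from, and it avoids any mass assumption.

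\textbf{Closing the estimate (main obstacle).} The right-hand side now contains $\E\int_0^t\|Y\|_{L_2}^2$, while the $\cH_2^s$ norm only controls $\E\sup_r\|Y(r)\|_{H_2^{s-1}}^2$ via Theorem \ref{thm:embedding_theorems}\eqref{embedding in time 2}. I would use the interpolation in Lemma \ref{lem:prop_of_bessel_space}\eqref{multi_ineq}, namely $\|Y\|_{L_2}^2\le\varepsilon\|Y\|_{H_2^s}^2+N_\varepsilon\|Y\|_{H_2^{s-1}}^2$, which is valid since $0\in(s-1,s)$. The $\varepsilon$-part is absorbed into $\|Y\|_{\bH_2^s(t)}\le\|Y\|_{\cH_2^s(t)}$. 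For the rest, $\E\int_0^t\|Y(r)\|_{H_2^{s-1}}^2\,\ud r\le\int_0^t\|Y\|^2_{\cH_2^s(r)}\,\ud r$. Gronwall's inequality then yields \eqref{eq:localized_H2_estimate}; finiteness for fixed $n$ comes from Lemma \ref{lem:regularized_solutions}, which justifies both the absorption and the Gronwall step.

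\textbf{Small-time bound.} For \eqref{eq:localized_small_time_estimate}, interpolate $\|Y\|_{H_2^\eta}\le\|Y\|_{H_2^{s}}^{\theta}\|Y\|_{H_2^{s-1}}^{1-\theta}$ with $\theta=\eta-s+1\le1-\ep$, which is possible after lowering $\eta$ to $s-\ep$ by monotonicity of the norms. Hölder's inequality in time then gives
\[
\E\int_0^\delta\|Y\|_{H_2^\eta}^2\le\Bigl(\E\int_0^\delta\|Y\|_{H_2^s}^2\Bigr)^{\theta}\Bigl(\delta\,\E\sup_{r\le\delta}\|Y\|_{H_2^{s-1}}^2\Bigr)^{1-\theta}.
\]
Both factors are bounded by $\|Y\|^2_{\cH_2^s(T)}$, so the whole expression is at most $\delta^{1-\theta}\|Y\|^2_{\cH_2^s(T)}$ with $1-\theta\ge\ep$, and \eqref{eq:localized_H2_estimate} finishes. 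If $\eta<s-1$ the bound is immediate from the sup estimate alone, giving the factor $\delta\le T^{1-\ep}\delta^\ep$.
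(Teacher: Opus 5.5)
Your proposal is correct and follows the paper's proof essentially step for step. The shared steps are: Krylov's $L_2$-estimate for $Y=u_n\zeta_m$; the noise bound via $|\sigma_n(r)|^2\le N(1+r^2)$, which produces the $\int\zeta_m^2$ term; absorbing $\|Y\|_{\bL_2(t)}$ through $\|f\|_{L_2}^2\le\delta_0\|f\|_{H_2^s}^2+N_{\delta_0}\|f\|_{H_2^{s-1}}^2$ together with the $H_2^{s-1}$ sup-embedding and Gronwall; and the same H\"older-in-time interpolation giving $\delta^{\ep}$. One small slip, which does not affect the argument since you only use the $L_2$ bound: the drift correction contains $(Y\zeta_m'/\zeta_m)_x$, so it is controlled by $N\|Y\|_{H_2^{s-1}}$ (hence by $N\|Y\|_{L_2}$), not by $N\|Y\|_{H_2^{s-2}}$ as you first wrote.
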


\begin{proof} 
Fix $m\ge1$ and set $Y_n^m:=u_n\zeta_m$. Since $u_n\in\cH_2^{s}(T)$ by
Lemma \ref{lem:regularized_solutions} and $\zeta_m$ is a pointwise multiplier
on the relevant spaces (Lemma \ref{lem:prop_of_bessel_space}\eqref{pointwise_multiplier}), we have
$Y_n^m\in\cH_2^s(T)$, and, in the sense of distributions,
\begin{equation}\label{eq:localized_measure_eq}
\begin{aligned}
\ud Y_n^m
&=
\bigl[L Y_n^m
-\{2a(u_n\zeta_m')_x - a u_n\zeta_m'' + b u_n\zeta_m'\}\bigr]\,\ud t
+
\sigma_n(u_n)\zeta_m\,\xi(\ud t,\ud x),
\\
Y_n^m(0,\cdot)
&=
u_{n,0}\zeta_m .
\end{aligned}
\end{equation}
We estimate the free terms in \eqref{eq:localized_measure_eq} in order to apply Krylov's $L_p$-estimate (see Theorem \ref{thm:krylov_lp_estimate}). 

Let us first consider the initial datum. Lemma \ref{lem:prop_of_bessel_space}\eqref{pointwise_multiplier} and \eqref{convolution} imply that 
\begin{equation}\label{eq:initial_measure_localized_bound}
    \sup_{n\ge1}
    \|u_{n,0}\zeta_m\|_{U_2^s}^2
    \le
    N\|\mu_0\|_{U_2^s}^2 .
\end{equation} 

Next,  thanks to \eqref{eq:zeta_bounds}, Lemma \ref{lem:prop_of_bessel_space}\eqref{pointwise_multiplier}, \eqref{bounded_operator}, and \eqref{norm_bounded} (using $s-1<0$) imply that 
\[
    \|2a(u_n\zeta_m')_x\|_{H_2^{s-2}}
    \le
    N\|(u_n\zeta_m')_x\|_{H_2^{s-2}}
    \le
    N\|u_n\zeta_m'\|_{H_2^{s-1}}
    \le
    N\|u_n\zeta_m'\|_{L_2}
    \le
    N\|Y_n^m\|_{L_2}.
\]
 The terms $au_n\zeta_m''$
and $bu_n\zeta_m'$ are handled in the same way. Hence, for every $t\le T$,
\begin{equation}\label{eq:localized_drift_measure_bound}
    \bigl\|
    2a(u_n\zeta_m')_x - a u_n\zeta_m'' + b u_n\zeta_m'
    \bigr\|_{\bH_2^{s-2}(t)}^2
    \le
    N\|Y_n^m\|_{\bL_2(t)}^2  <\infty.
\end{equation}

For the noise term, by \eqref{eq:sigma-n-properties}, $|\sigma_n(r)|^2\le N(r^{2\gamma}+r^2)\le N(1+r^2)$ for $r\ge0$. Hence, Lemma \ref{lem:white_noise_estimate} yields
\begin{equation}\label{eq:localized_noise_measure_bound}
    \|\sigma_n(u_n)\zeta_m\mathbf e\|_{\bH_2^{s-1}(t,\ell_2)}^2
    \le
    N\E\int_0^t\int_{\bR}
    (1+u_n^2)\zeta_m^2\,\ud x\,\ud r
    \le
    N\int_{\bR}\zeta_m(x)^2\,\ud x
    +
    N\|Y_n^m\|_{\bL_2(t)}^2<\infty.
\end{equation}
Thus, applying Krylov's $L_2$-estimate (Theorem \ref{thm:krylov_lp_estimate}) to 
\eqref{eq:localized_measure_eq}, and using
\eqref{eq:initial_measure_localized_bound}--\eqref{eq:localized_noise_measure_bound}, we obtain
\begin{equation}\label{eq:pre_gronwall_localized_measure}
    \|Y_n^m\|_{\cH_2^s(t)}^2
    \le
    N A_m
    +
    N\|Y_n^m\|_{\bL_2(t)}^2,
    \qquad 
    A_m
    :=
    \|\mu_0\|_{U_2^s}^2
    +
    \int_{\bR}\zeta_m(x)^2\,\ud x.
\end{equation}
It remains to control the last term. By
Lemma \ref{lem:prop_of_bessel_space}\eqref{multi_ineq}, for every $\delta_0>0$, 
\[
    \|f\|_{L_2}^2
    \le
    \delta_0\|f\|_{H_2^s}^2
    +
    N_{\delta_0}\|f\|_{H_2^{s-1}}^2,
\]
and by Theorem \ref{thm:embedding_theorems}\eqref{embedding in time 2},
\[
    \E\sup_{0\le r\le t}\|Y_n^m(r)\|_{H_2^{s-1}}^2
    \le
    N\|Y_n^m\|_{\cH_2^s(t)}^2.
\]
Combining these two estimates,
\[
    \|Y_n^m\|_{\bL_2(t)}^2
    \le
    \delta_0\|Y_n^m\|_{\bH_2^s(t)}^2
    +
    N_{\delta_0}\int_0^t\|Y_n^m\|_{\cH_2^s(r)}^2\,\ud r.
\]
Choosing $\delta_0$ sufficiently small, we have 
\[
    \|Y_n^m\|_{\cH_2^s(t)}^2
    \le
    N A_m
    +
    N\int_0^t\|Y_n^m\|_{\cH_2^s(r)}^2\,\ud r,
\]
and \eqref{eq:localized_H2_estimate} follows from Gronwall's inequality.

We now prove the small-time estimate. By \eqref{eq:localized_H2_estimate} and Theorem \ref{thm:embedding_theorems}\eqref{embedding in time 2},
\begin{equation}\label{eq:time_bound}
    \sup_{n\ge1}
    \E\sup_{0\le r\le T}\|Y_n^m(r)\|_{H_2^{s-1}}^2
    \le
    N A_m
    \qquad\text{and}\qquad
    \sup_{n\ge1}
    \|Y_n^m\|_{\bH_2^s(T)}^2
    \le
    N A_m .
\end{equation}
Since $\eta\le s-\ep$ and $s-\ep=\ep(s-1)+(1-\ep)s$,
Lemma \ref{lem:prop_of_bessel_space}\eqref{norm_bounded} and \eqref{multi_ineq} give
\[
    \|f\|_{H_2^\eta}
    \le
    \|f\|_{H_2^{s-\ep}}
    \le
    \|f\|_{H_2^{s-1}}^{\ep}\,\|f\|_{H_2^s}^{1-\ep}.
\]
Hence, for $0<\delta<T$, H\"older's inequality with exponents $1/\ep$ and
$1/(1-\ep)$ on $\Omega\times[0,\delta]$ and \eqref{eq:time_bound} yield
\[
\begin{aligned}
    \E\int_0^\delta\|Y_n^m(r)\|_{H_2^\eta}^2\,\ud r
    &\le
    \left(
        \E\int_0^\delta\|Y_n^m(r)\|_{H_2^{s-1}}^2\,\ud r
    \right)^{\ep}
    \left(
        \E\int_0^\delta\|Y_n^m(r)\|_{H_2^s}^2\,\ud r
    \right)^{1-\ep}
    \\
    &\le
    \delta^{\ep}
    \left(
        \E\sup_{0\le r\le\delta}\|Y_n^m(r)\|_{H_2^{s-1}}^2
    \right)^{\ep}
    \left(
        \|Y_n^m\|_{\bH_2^s(T)}^2
    \right)^{1-\ep}
    \le
    N\delta^{\ep}A_m .
\end{aligned}
\]
This proves \eqref{eq:localized_small_time_estimate}.
\end{proof}

\subsection{Tightness and identification of the limit}
\label{subsec:tightness_identification}

By combining Lemma \ref{lem:uniform_localized_sobolev} with the bootstrap
technique of \cite[Remark~8.8]{kry1999analytic}, we obtain the following
tightness result.

\begin{proposition}
\label{prop:tightness}
The sequence $(u_n)_{n\ge1}$ is tight in $C((0,T];C_{\rm tem})$. 
\end{proposition}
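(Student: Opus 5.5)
We prove tightness on each compact interval $[\delta,T]$ separately, and then use a diagonal argument. The topology of $C((0,T];C_{\rm tem})$ is that of uniform convergence on $[\delta,T]\times[-R,R]$ together with the weighted sup-norms $\sup|f|e^{-a|x|}$. It therefore suffices to show, for each fixed $\delta\in(0,T)$ and each $m\ge1$, that the family $(u_n\zeta_m)_{n\ge1}$ restricted to $[\delta,T]$ satisfies
\[
\sup_{n\ge1}\E\bigl\|u_n\zeta_m\bigr\|^{p}_{C^{\ba-1/p}([\delta,T];C^{\theta}(\bR))}<\infty
\]
for some $p>2$, $\theta>0$ and $\ba>1/p$. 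Since $\zeta_m$ decays like $e^{-|x|/m}$, Hölder bounds of this form for every $m$ give equicontinuity and tail control in every weight $e^{-a|x|}$ with $a>1/m$. Arzelà–Ascoli, applied on $[\delta,T]\times[-R,R]$ together with the uniform tail bound, then yields tightness in $C([\delta,T];C_{\rm tem})$. Letting $\delta=1/k\downarrow0$ gives tightness in $C((0,T];C_{\rm tem})$.

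The uniform Hölder bound comes from the bootstrap of \cite[Remark~8.8]{kry1999analytic}, started from Lemma~\ref{lem:uniform_localized_sobolev}.

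\emph{Step 1 (start at positive time).} By \eqref{eq:localized_H2_estimate}, $\E\int_0^T\|u_n(t)\zeta_m\|_{H_2^s}^2\,\ud t$ is bounded uniformly in $n$. Hence for each $\delta$ we can find a time $t_0\in(0,\delta/2)$ with $\sup_n \E\|u_n(t_0)\zeta_m\|^2_{H_2^{s}}$ bounded. Alternatively, we multiply by a smooth time cutoff $\chi$ with $\chi=0$ on $[0,\delta/4]$ and $\chi=1$ on $[\delta/2,T]$. Then $\chi u_n\zeta_m$ solves \eqref{eq:localized_measure_eq} with zero initial datum and an additional drift $\chi' u_n\zeta_m$. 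This extra drift is controlled in $\bH_2^{s-2}$, indeed in $\bL_2$, by \eqref{eq:localized_H2_estimate}. We prefer the cutoff approach, since it avoids choosing a random good time.

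\emph{Step 2 (raise integrability).} For the cutoff process, Krylov's $L_p$-estimate (Theorem~\ref{thm:krylov_lp_estimate}) with $\eta=s$ requires the noise $\chi\sigma_n(u_n)\zeta_m\mathbf e$ in $\bH_p^{s-1}(\ell_2)$. By Lemma~\ref{lem:white_noise_estimate} this needs $\chi u_n\zeta_m\in\bL_p$, plus an $L_{p\gamma}$-type term bounded using $\zeta_m$ and $u^{\gamma}\le 1+u$. The drift terms from the weights and from $\chi'$ need $u_n\zeta_m\in\bL_p$ on the support of $\chi'$, which is a slightly earlier time window. We iterate on a finite chain of exponents $2=p_0<p_1<\dots<p_k=p$ and shrinking cutoffs $\chi_0\ge\chi_1\ge\dots$. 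At each stage, Theorem~\ref{thm:embedding_theorems}\eqref{embedding in time} together with the Sobolev embedding Lemma~\ref{lem:prop_of_bessel_space}\eqref{norm_bounded} turns the $\cH_{p_j}^{s}$ bound into a bound on $\E\sup_t\|\chi_j u_n\zeta_m\|^{p_j}_{L_{p_{j+1}}}$. This works because $H_{p_j}^{s-2\bb}\subset L_{p_{j+1}}$ whenever $1/p_{j+1}\ge 1/p_j-(s-2\bb)$. This feeds the next application of Krylov's estimate; a Gronwall step, as in the proof of Lemma~\ref{lem:uniform_bound}, absorbs the $\|\cdot\|_{\bL_p}$ contributions of the same stage. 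All constants depend on $\delta,m,\kappa$ but not on $n$, because $\sigma_n$ obeys \eqref{eq:sigma-n-properties} uniformly and the initial data have been removed by $\chi$.

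\emph{Step 3 (conclude).} Once we have a uniform bound in $\cH_p^{s}$ with $(1-2\kappa)p>6$, Theorem~\ref{thm:embedding_theorems}\eqref{embedding in time and space} gives the desired uniform Hölder moment on $[\delta,T]$. Tightness then follows as described above.

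The main obstacle is Step 2. Its first iteration starts from only the $L_2$-in-time information of Lemma~\ref{lem:uniform_localized_sobolev}, and the cutoff derivative $\chi'$ lives near times where nothing better than that is known. This forces the nested cutoffs and a careful check that each gain $s-2\bb>0$ is positive, uniformly in $n$, and that only finitely many steps are needed to reach $p>6/(1-2\kappa)$.
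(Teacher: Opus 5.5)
There is a genuine gap, and it is in Step~2. At exponent $p_{j+1}$, Theorem~\ref{thm:krylov_lp_estimate} needs the free terms in $\bH^{s-2}_{p_{j+1}}$ and $\bH^{s-1}_{p_{j+1}}(\ell_2)$. That means $p_{j+1}$-th moments in $\omega$ as well as $L_{p_{j+1}}$ in space. Stage $j$ only gives you a bound on $\E\sup_t\|\chi_ju_n\zeta_m\|^{p_j}_{L_{p_{j+1}}}$. The Sobolev embedding raises the \emph{spatial} exponent, but the moment exponent stays at $p_j$. So $\chi'_{j+1}u_n\zeta_m$ is not controlled in $\bH^{s-2}_{p_{j+1}}$, and the chain already breaks at $p_0=2\to p_1$.

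Spatial integrability was never the real obstacle. By Lemma~\ref{lem:prop_of_bessel_space}\eqref{norm_bounded}, $H_2^{s-1}\hookrightarrow H_p^{s-2}$ for every $p\ge2$, so \eqref{eq:time_bound} already controls $\chi'u_n\zeta_m$ in $H_p^{s-2}$. The obstacle is integrability in $\omega$. Note also that the cutoff $\chi$ does not remove the dependence on $\mu_0$: it re-enters through $\chi'u_n\zeta_m$.

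In fact your target cannot hold. You aim for a uniform-in-$n$ bound on the $p$-th moment of a H\"older norm of $u_n\zeta_m$ on $[\delta,T]\times\bR$ with $p>2$, but $\mu_0$ is only assumed to lie in $L_2(\Omega;H_2^{-1/2-\kappa})$. Take $L=\partial_x^2$ and $\mu_0=X\delta_0$ with $X\ge0$, $\E X^2<\infty=\E X^p$. Since $\sigma_n$ is bounded, for $0\le\phi\in C_c^\infty$ with $\phi\not\equiv0$ one has $\E[(u_n(t),\phi)\mid\cF_0]=X(\Theta_n,e^{t\partial_x^2}\phi)\ge cX$ for $t\in[\delta,T]$. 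Also $(u_n(t),\phi)\le\|\phi/\zeta_m\|_{L_1}\sup_x u_n(t,x)\zeta_m(x)$. Conditional Jensen then gives $\E\sup_x|u_n(t,x)\zeta_m(x)|^p=\infty$ for every $n$.

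The missing idea is localization in $\omega$. Tightness needs only probability bounds, so you should not ask for uniform $p$-th moments. The paper's argument runs as follows.
\begin{itemize}
\item It picks a \emph{deterministic} good time $s_d\in(d,2d)$ with $\E\|u_n(s_d)\zeta_m\|^2_{H_2^{1/2-\kappa_0}}\le N_m/d$.
\item It restricts to the $\cF_{s_d}$-measurable event $\fA=\{\|u_n(s_d)\zeta_m\|_{H_p^{\theta_0}}\le R\}$. By the embedding $H_2^{1/2-\kappa_0}\hookrightarrow H_p^{\theta_0}$ and Chebyshev, $\P(\fA^c)\le N_m/(dR^2)$.
\item It applies Krylov's $L_p$-estimate to $Z=\1_{\fA}u_n\zeta_m$ on $[s_d,T]$, whose initial value is now bounded by $R$. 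Two such steps, via the trace relations $\theta_1-2/p=\theta_0$ and $\theta-2/p=\theta_1$, give $\E\|Z\|^p_{C^{\varrho}([T/m,T]\times\bR)}\le N_m(1+R^p)/d$ for some H\"older exponent $\varrho>0$.
\item Hence $\P(\|u_n\zeta_m\|_{C^{\varrho}([T/m,T]\times\bR)}>S)\le N_m/(dR^2)+N_m(1+R^p)/(dS^p)$, which is exactly what tightness requires.
\end{itemize}

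Your cutoff route should be repairable in the same spirit. For instance, stop at $\tau_R:=\inf\{t:\|u_n(t)\zeta_m\|_{H_2^{s-1}}>R\}\wedge T$, for which $\P(\tau_R<T)\le N_m/R^2$ by \eqref{eq:time_bound}. Up to $\tau_R$ one has $\|u_n\zeta_m\|_{H_p^{s-2}}\le NR$. This bounds the $\chi'$-term directly and lets the $\bL_p$-terms be absorbed by interpolation. A single Krylov step at any $p$ then suffices, and no chain of exponents is needed. Your final compactness argument (Arzel\`a--Ascoli with the weights $\zeta_m$) is fine and agrees with Step~5 of the paper.
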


\begin{proof}
The strategy of the proof is to select a good time at which the solution has positive
Sobolev regularity on an event of high probability, and then to apply
Krylov's estimate (Theorem~\ref{thm:krylov_lp_estimate}) twice, each
application raising the regularity by $2/p$, until the resulting
$\cH_p^{\theta}$-bound is strong enough for the H\"older embedding of
Theorem~\ref{thm:embedding_theorems}\eqref{embedding in time and space}.
Throughout the proof we write $N_m$ in place of $N$ to indicate possible dependence on $m$.

We start with the following elementary observation, which will be used
twice to select good times: if $\phi:(0,T)\to[0,\infty]$ is measurable and
$\int_I\phi(t)\,\ud t\le B$ for some interval $I\subset(0,T)$ of length $d$,
then, by Chebyshev's inequality,
\begin{equation}\label{eq:time_selection}
    \phi(t_*)\le\frac{2B}{d}
    \qquad\text{for some }t_*\in I.
\end{equation}
The proof consists of the following five steps. 

\medskip
\emph{Step 1: Parameters.}
Choose $p>8$ and
\begin{equation}\label{eq:choice_of_epsilon_0}
    0<\varepsilon_0<1\wedge\frac{p-8}{2},
\end{equation}
and set
\[
    \theta_0:=\frac{\varepsilon_0}{p},
    \qquad
    \theta_1:=\frac{2+\varepsilon_0}{p},
    \qquad
    \theta:=\frac{4+\varepsilon_0}{p},
    \qquad
    \kappa_0:=\frac{1-\varepsilon_0}{p}.
\]
Then $\kappa_0\in(0,1/2)$, and
\begin{equation}\label{eq:trace_relations}
    \theta_1-\frac2p=\theta_0,
    \qquad
    \theta-\frac2p=\theta_1,
    \qquad
    0<\theta_0<\theta_1<\theta<\frac12,
    \qquad
    \theta p >3.
\end{equation}
Choose $\ba,\bb$ such that
\begin{equation}\label{ab}
    \frac1p<\ba<\bb<\frac12\left(\theta-\frac1p\right),
\end{equation}
which is possible since $\theta p>3$, and set
\[
    \rho_t:=\ba-\frac1p>0,
    \qquad
    \rho_x:=\theta-2\bb-\frac1p>0,
    \qquad
    \delta:=\rho_t\wedge\rho_x>0.
\]
Note that $\delta$ is independent of $m$ and $n$. Finally, fix $m\ge2$,
choose
\begin{equation}\label{eq:choice_of_d}
    0<d=d_m<\frac{T}{4m}\wedge1,
\end{equation}
and write $Y_n=Y_n^m:=u_n\zeta_m$ where the superscript $m$ is suppressed as $m$ is fixed until the last step.

\medskip
\emph{Step 2: A good time and a good event.}
Since $\mu_0\in L_2\bigl(\Omega;H_2^{-1/2-\kappa}(\bR)\bigr)$ for every $\kappa\in(0,1/2)$, Lemma \ref{lem:uniform_localized_sobolev} applies with
$\kappa=\kappa_0$ and gives
\begin{equation}\label{eq:localized_H2_bound}
    \int_0^T\E\|Y_n(t)\|_{H_2^{1/2-\kappa_0}}^2\,\ud t
    \le
    \|Y_n\|_{\cH_2^{1/2-\kappa_0}(T)}^2
    \le
    I_m
    :=
    N\|\mu_0\|_{U_2^{1/2-\kappa_0}}^2
    +
    N\int_{\bR}\zeta_m(x)^2\,\ud x,
\end{equation}
where $I_m$ is independent of $n$. By \eqref{eq:time_selection} applied to
$\phi(t)=\E\|Y_n(t)\|_{H_2^{1/2-\kappa_0}}^2$ on $I=(d,2d)$, there exists a
deterministic \emph{good time} $s_d\in(d,2d)$, possibly depending on $n$ and $m$, such that
\begin{equation}\label{eq:first_good_time}
    \E\|Y_n(s_d)\|_{H_2^{1/2-\kappa_0}}^2
    \le
    \frac{2I_m}{d}.
\end{equation}
Since $\frac12-\kappa_0-(\frac12-\frac1p)=\theta_0$, the Sobolev embedding (Lemma~\ref{lem:prop_of_bessel_space}\eqref{norm_bounded}) gives
\begin{equation}\label{eq:initial_sobolev_embedding}
    H_2^{1/2-\kappa_0}(\bR)\hookrightarrow H_p^{\theta_0}(\bR).
\end{equation}
For $R>0$, define a \emph{good event}
\[
    \fA:=\bigl\{\|Y_n(s_d)\|_{H_p^{\theta_0}}\le R\bigr\}\in\cF_{s_d}.
\]
By \eqref{eq:initial_sobolev_embedding}, \eqref{eq:first_good_time}, and
Chebyshev's inequality,
\begin{equation}\label{eq:prob_of_A}
    \P(\fA^c)
    \le
    \P\bigl(\|Y_n(s_d)\|_{H_2^{1/2-\kappa_0}}>R/N\bigr)
    \le
    \frac{NI_m}{dR^2}.
\end{equation}

\medskip
\emph{Step 3: First regularity estimate.}
Set $Z:=\1_{\fA}Y_n$, $\rho_{1,m}:=\zeta_m'/\zeta_m$, and
$\rho_{2,m}:=\zeta_m''/\zeta_m$. Since $\fA\in\cF_{s_d}$, the indicator
$\1_{\fA}$ may be moved inside both the deterministic and the stochastic
integrals over $(s_d,t]$, and $\1_{\fA}u_n\zeta_m'=Z\rho_{1,m}$,
$\1_{\fA}u_n\zeta_m''=Z\rho_{2,m}$. Hence, on $[s_d,T]$,
\begin{equation}\label{eq:localized_equation_for_Z}
    \ud Z
    =
    \bigl[LZ-F_m(Z)\bigr]\,\ud t
    +
    G_{n,m}\,\xi(\ud t,\ud x),
    \qquad
    Z(s_d)=\1_{\fA}Y_n(s_d),
\end{equation}
where
\[
    F_m(Z):=2a(Z\rho_{1,m})_x-aZ\rho_{2,m}+bZ\rho_{1,m},
    \qquad
    G_{n,m}:=\1_{\fA}\sigma_n(u_n)\zeta_m,
\]
and, by the definition of $\fA$,
\begin{equation}\label{eq:initial_theta0_bound}
    \E\|Z(s_d)\|_{H_p^{\theta_0}}^p\le R^p.
\end{equation}
Note that for each fixed $n$, Lemma \ref{lem:regularized_solutions} gives $u_n\in\cH_p^{1/2-\kappa'}(T)$ for every $\kappa'\in(0,1/2)$. Fixing $\kappa'$ with $\theta<1/2-\kappa'$, which is possible since $\theta<1/2$ by \eqref{eq:trace_relations}, we obtain $Z\in\cH_p^{\theta}(s_d,T)\subset\cH_p^{\theta_1}(s_d,T)$. Thanks to this, Theorem \ref{thm:krylov_lp_estimate} can be applied in this step and the next. 

We now estimate the drift and noise terms as in the proof of Lemma \ref{lem:uniform_bound}. Since $\theta_1-1<0$,
Lemma \ref{lem:prop_of_bessel_space}\eqref{pointwise_multiplier},
\eqref{bounded_operator} and \eqref{norm_bounded} give
\begin{equation}\label{eq:localized_drift_estimate}
    \|F_m(Z)\|_{H_p^{\theta_1-2}}
    \le
    N_m\|Z\|_{H_p^{\theta_1-1}}
    \le
    N_m\|Z\|_{L_p},
\end{equation}
while, since $u_n\ge0$, \eqref{eq:sigma-n-properties} gives
$|G_{n,m}|\le N\zeta_m+NZ$, so that, as $\theta_1\in(0,1/2)$,
Lemma \ref{lem:white_noise_estimate} yields
\begin{equation}\label{eq:localized_noise_estimate}
    \|G_{n,m}\mathbf e\|_{H_p^{\theta_1-1}(\ell_2)}^p
    \le
    N\|G_{n,m}\|_{L_p}^p
    \le
    N_m+N_m\|Z\|_{L_p}^p.
\end{equation}
We now apply Theorem \ref{thm:krylov_lp_estimate} with $\eta=\theta_1$. By
\eqref{eq:trace_relations} the initial term is controlled by
\eqref{eq:initial_theta0_bound}, so that
\eqref{eq:localized_drift_estimate}--\eqref{eq:localized_noise_estimate} give
\begin{equation}\label{eq:first_krylov_estimate}
    \|Z\|_{\cH_p^{\theta_1}(s_d,t)}^p
    \le N_m(1+R^p)+N_m\int_{s_d}^t\E\|Z(r)\|_{L_p}^p\,\ud r.
\end{equation}
Setting $\bb_0:=\theta_1/2$ and $\ba_0:=\frac1p+\frac{\ep_0}{4p}$, we have
$\frac1p<\ba_0<\bb_0<\frac12$ and $\theta_1-2\bb_0=0$, so
Theorem \ref{thm:embedding_theorems}\eqref{embedding in time} on $[s_d,r]$
gives
\begin{equation}\label{eq:first_time_embedding}
    \E\sup_{s_d\le\ell\le r}\|Z(\ell)\|_{L_p}^p
    \le
    N\|Z\|_{\cH_p^{\theta_1}(s_d,r)}^p.
\end{equation}
Combining \eqref{eq:first_krylov_estimate} and
\eqref{eq:first_time_embedding}, Gronwall's inequality yields
\begin{equation}\label{eq:first_regularization_bound}
    \|Z\|_{\cH_p^{\theta_1}(s_d,T)}^p
    +
    \E\sup_{s_d\le t\le T}\|Z(t)\|_{L_p}^p
    \le
    N_m(1+R^p).
\end{equation}

\medskip
\emph{Step 4: Second regularity estimate and H\"older bound.}
By \eqref{eq:first_regularization_bound} and \eqref{eq:time_selection} applied
to $\phi(t)=\E\|Z(t)\|_{H_p^{\theta_1}}^p$ on $I=(s_d+d,s_d+2d)$, there exists
a deterministic time $r_d\in(s_d+d,s_d+2d)$, possibly depending on $n,m$ and $R$,
such that
\begin{equation}\label{eq:second_good_time}
    \E\|Z(r_d)\|_{H_p^{\theta_1}}^p\le\frac{N_m(1+R^p)}{d},
\end{equation}
and, by \eqref{eq:choice_of_d},
\begin{equation}\label{eq:r_d_small}
    r_d<s_d+2d<4d<\frac Tm.
\end{equation}
We apply Theorem \ref{thm:krylov_lp_estimate} once more to
\eqref{eq:localized_equation_for_Z}, now on $[r_d,T]$ with initial value
$Z(r_d)$ and with $\eta=\theta$. Since $0<\theta<1/2$, the drift and noise
terms are estimated exactly as in \eqref{eq:localized_drift_estimate} and
\eqref{eq:localized_noise_estimate}, with $\theta$ in place of $\theta_1$. By
\eqref{eq:trace_relations} the initial term is controlled by
\eqref{eq:second_good_time} and the $\bL_p$-terms by
\eqref{eq:first_regularization_bound}. Hence, using $d<1$, 
\begin{equation}\label{eq:second_krylov_estimate}
    \|Z\|_{\cH_p^{\theta}(r_d,T)}^p
    \le
    N\,\E\|Z(r_d)\|_{H_p^{\theta_1}}^p+N_m
    +N_m\int_{r_d}^T\E\|Z(t)\|_{L_p}^p\,\ud t
    \le
    \frac{N_m(1+R^p)}{d}.
\end{equation}
By the choice of $\ba,\bb$ in \eqref{ab},
Theorem \ref{thm:embedding_theorems}\eqref{embedding in time and space} on
$[r_d,T]$ together with \eqref{eq:r_d_small} gives
\begin{equation}\label{eq:holder_bound}
    \E\|Z\|_{C^{\rho_t}([T/m,T];\,C^{\rho_x}(\bR))}^p
    \le
    N\|Z\|_{\cH_p^{\theta}(r_d,T)}^p
    \le
    \frac{N_m(1+R^p)}{d},
\end{equation}
and since $\delta=\rho_t\wedge\rho_x$, so that
$\|f\|_{C^\delta([T/m,T]\times\bR)}\le N\|f\|_{C^{\rho_t}([T/m,T];C^{\rho_x}(\bR))}$,
\begin{equation}\label{eq:holder_bound_delta}
    \E\|Z\|_{C^{\delta}([T/m,T]\times\bR)}^p
    \le
    \frac{N_m(1+R^p)}{d}.
\end{equation}

\medskip
\emph{Step 5: Probability estimate and tightness.}
It remains to prove tightness in $C((0,T];\Ctem)$. Write $J_m:=[T/m,T]$, so
that $D_m=J_m\times[-m,m]$. We first show that, for any
positive constants $(S_m)_{m\ge2}$, the set
\[
    K:=\Bigl\{f\in C((0,T];\Ctem):
    \|f\zeta_m\|_{C^\delta(J_m\times\R)}\le S_m\ \text{ for every }m\ge2\Bigr\}
\]
is relatively compact.

Let $(f_j)_{j\ge1}\subset K$. Since $\zeta_m^{-1}$ is smooth and bounded on
$[-m,m]$ and $D_m\subset J_m\times\R$, we have
$\sup_{j\ge1}\|f_j\|_{C^\delta(D_m)}\le N_mS_m$ for every $m\ge2$. Hence, by
the Arzel\`a--Ascoli theorem and a diagonal argument, a subsequence, still
denoted by $(f_j)_j$, converges locally uniformly on $(0,T]\times\R$ to a
continuous function $f$. Now fix $T_0>0$ and $\lambda>0$, and choose $m\ge2$
with $T/m\le T_0$ and $1/m<\lambda$. Since
$\zeta_m^{-1}(x)=\cosh(x/m)\le e^{|x|/m}$, every $g\in K$ satisfies
\[
    \sup_{t\in[T_0,T],\,|x|\ge r}|g(t,x)|e^{-\lambda|x|}
    \le S_m e^{-(\lambda-1/m)r},\qquad r>0,
\]
and the same bound holds for $f$, being the pointwise limit of the $f_j$.
Therefore
\[
    \sup_{t\in[T_0,T],\,x\in\R}|f_j(t,x)-f(t,x)|e^{-\lambda|x|}
    \le\sup_{t\in[T_0,T],\,|x|\le r}|f_j(t,x)-f(t,x)|+2S_me^{-(\lambda-1/m)r},
\]
and letting $j\to\infty$ and then $r\to\infty$ gives $f_j\to f$ in
$C((0,T];\Ctem)$. Thus $K$ is relatively compact, and we may replace it by its
closure.

We now choose the constants. Since
$\1_{\fA}\|u_n\zeta_m\|_{C^\delta(J_m\times\R)}=\|Z\|_{C^\delta(J_m\times\R)}$
pathwise, \eqref{eq:prob_of_A}, \eqref{eq:holder_bound_delta} and Chebyshev's
inequality give, for every $S>0$,
\begin{equation}\label{eq:prob_estimate}
    \P\Bigl(\|u_n\zeta_m\|_{C^\delta(J_m\times\R)}>S\Bigr)
    \le\frac{NI_m}{dR^2}+\frac{N_m(1+R^p)}{dS^p}.
\end{equation}
Let $\ep>0$. For each $m\ge2$ choose first $R_m$ and then $S_m$, both
independent of $n$, so that each term on the right-hand side of
\eqref{eq:prob_estimate} is at most $\ep2^{-m-1}$, and let $K_\ep$ be the
corresponding set $K$. Then
\[
    \inf_{n\ge1}\P(u_n\in K_\ep)
    \ge1-\sup_{n\ge1}\sum_{m=2}^\infty
    \P\Bigl(\|u_n\zeta_m\|_{C^\delta(J_m\times\R)}>S_m\Bigr)
    \ge1-\ep,
\]
which shows that $(u_n)_{n\ge1}$ is tight in $C((0,T];\Ctem)$.
\end{proof}

Since Proposition \ref{prop:tightness} provides tightness of $(u_n)_{n\ge1}$ in $C((0,T];C_{\rm tem})$, we now pass to the limit $n\to\infty$ (along the subsequence) and then identify every subsequential limit as a solution of \eqref{eq:SPDE} with initial measure $\mu_0$.
\begin{proposition}
\label{prop:existence_measure_initial}
There exist a filtered probability space, a space-time white noise $\widetilde\xi$, a nonnegative random Radon measure $\widetilde\mu_0$, a coefficient field $(\widetilde a,\widetilde b,\widetilde c) $ satisfying Assumption~\ref{asp:coefficients}, and a nonnegative process 
\[ 
\ u\in C((0,T];C_{\mathrm{tem}}) 
\] 
such that $L(\widetilde\mu_0,\widetilde a,\widetilde b,\widetilde c) = L(\mu_0,a,b,c) $ and $u$ is a solution of 
\[ 
\partial_t u = \widetilde L u + \sigma( u)\widetilde\xi, \qquad  u(0,\cdot)=\widetilde\mu_0, 
\] 
where $\widetilde L = \widetilde a\,\partial_x^2 + \widetilde b\,\partial_x + \widetilde c$. More precisely, for every $\varphi\in C_c^\infty(\R)$ and every $t\in(0,T]$,
\begin{equation}
\label{eq:measure_limit}
\begin{aligned}
    (u(t),\varphi)
    &=
    (\widetilde\mu_0,\varphi)
    +
    \int_0^t (u(s),\widetilde L^*\varphi(s))\,\ud s  +
    \int_0^t\int_{\bR}
        \sigma(u(s,x))\varphi(x)\,\widetilde\xi(\ud s,\ud x)
    \qquad\text{a.s.}
\end{aligned}
\end{equation}
Moreover, if $\bE W_q(\mu_0)<\infty$, then for every
$\alpha\in(0,1)$,
\begin{equation}\label{eq:uniform_weighted_u}
    \bE\left[
        \sup_{0<t\le T} W_q(u(t))^\alpha
    \right]
    <\infty .
\end{equation}
In particular,
\[
    W_q(u(t))<\infty
    \qquad\text{for every }t\in(0,T]\text{ a.s.}
\]
\end{proposition}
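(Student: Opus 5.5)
The plan is the standard tightness--Skorokhod--martingale-problem route, applied jointly to $(u_n,\mu_0,a,b,c)$ so that the random coefficient field is carried along. By Proposition~\ref{prop:tightness}, $(u_n)$ is tight in $C((0,T];\Ctem)$. The law of $(\mu_0,a,b,c)$ does not depend on $n$, so each of these components is trivially tight: $\mu_0$ in the space of Radon measures with the vague topology, and $(a,b,c)$ in the Polish space $\cC$. Hence the quadruple $(u_n,\mu_0,a,b,c)$ is tight in the product space. We also append the processes $B_n(t,\varphi):=\int_0^t\int\sigma_n(u_n)\varphi\,\xi(\ud s,\ud x)$, or simply the noise $\xi$ viewed as a cylindrical Wiener process in $H_2^{-1}$; both are tight because their laws are fixed or controlled by BDG. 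Skorokhod's theorem then gives, on a new space, copies $(\widetilde u_n,\widetilde\mu_0^n,\widetilde a_n,\widetilde b_n,\widetilde c_n)$ converging a.s. to $(u,\widetilde\mu_0,\widetilde a,\widetilde b,\widetilde c)$. The marginal law of the last four components is $\mathcal L(\mu_0,a,b,c)$, and each copy solves \eqref{eq:regularized-measure-equation} with its own coefficients. Nonnegativity passes to the limit.

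To identify the limit, fix $\varphi\in C_c^\infty$ and $0<t_0<t$. Define
\[
M_n(t):=(\widetilde u_n(t),\varphi)-(\widetilde u_{n,0},\varphi)-\int_0^t(\widetilde u_n(s),\widetilde L_n^*\varphi(s))\,\ud s .
\]
Each $M_n$ is a martingale with respect to the filtration generated by $(\widetilde u_n,\widetilde\mu_0^n,\widetilde a_n,\widetilde b_n,\widetilde c_n)$, with bracket $\int_0^t\int\sigma_n^2(\widetilde u_n)\varphi^2\,\ud x\,\ud s$. The limit $\widetilde L^*\varphi$ converges locally uniformly because convergence in $\cC$ gives $C^2$ convergence on $\supp\varphi$.

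The key step, and the main obstacle, is the behaviour near $t=0$, where convergence holds only on $(0,T]$. First I show $(\widetilde u_{n,0},\varphi)\to(\widetilde\mu_0,\varphi)$, which holds because $u_{n,0}$ is a mollification of $\mu_0$. Next I show that the $s\in(0,\delta)$ contributions are uniformly small. For this I would use \eqref{eq:localized_small_time_estimate} with $\eta=0$ to bound $\E\int_0^\delta|(\widetilde u_n,\widetilde L_n^*\varphi)|\,\ud s\le N\delta^{\ep/2}$. For the bracket I would use $\sigma_n^2(u)\le N(1+u^2)$ and the same estimate.

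The bracket near $0$ is the delicate point, since $\int_0^\delta\int u_n^2$ needs $\bL_2(\delta)$ control. That control is exactly \eqref{eq:localized_small_time_estimate} with $\eta=0\le s-\ep$. Continuity of $(u(t),\varphi)$ as $t\downarrow0$ then also follows, because the right-hand side is continuous.

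For $s\ge\delta$, a.s. local uniform convergence together with uniform integrability gives convergence of $M_n$ and of its bracket. Uniform integrability comes from \eqref{eq:localized_H2_estimate} plus the $L_p$ bounds of Step 4 in the tightness proof.

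To get martingale properties across the different filtrations, I test against bounded continuous functionals of the path on $[0,r]$ and localize with stopping times $\tau_k=\inf\{t:\sup_x u(t,x)\zeta(x)\ge k\}\wedge T$; these are handled by lower semicontinuity along a.s. convergent subsequences. This yields a continuous martingale measure with covariance $\sigma^2(u)$. A representation theorem (Walsh, or the \cite{KKMS23}-type construction on an enlarged space with an independent white noise) produces $\widetilde\xi$ satisfying \eqref{eq:measure_limit}.

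Finally, for \eqref{eq:uniform_weighted_u}, apply Lemma~\ref{lem:mtg_bound_w} via Remark~\ref{rmk:L_epsilon_bound_v_n} to $u_n$ and use $W_q(u_{n,0})\le N W_q(\mu_0)$, which holds since $\supp\Theta\subset[0,1]$ gives $\Phi_q(x)\le N\Phi_q(y)$ for $|x-y|\le1$. This yields
\[
\E\sup_t W_q(u_n(t))^\alpha\le N\,\E W_q(\mu_0)^\alpha\le N(1+\E W_q(\mu_0)) ,
\]
uniformly in $n$. Fatou's lemma, applied with the truncated weight $\Phi_q\zeta_m$ and a sup over rational $t$ (continuity gives all $t$), transfers the bound to $u$. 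Letting $m\to\infty$ by monotone convergence completes the argument.
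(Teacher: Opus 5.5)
Your architecture matches the paper's: a joint Skorokhod representation of data and solution, the martingale problem with respect to the filtrations generated by the copies, small-time control from \eqref{eq:localized_small_time_estimate} with $\eta=0$, and Fatou for the weighted mass. There is, however, a genuine gap in the step that identifies the bracket.

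\textbf{Your localization degenerates for singular initial data.} Your stopping times are $\tau_k=\inf\{t:\sup_x u(t,x)\zeta(x)\ge k\}\wedge T$. They are trivial for singular initial measures, which the statement must cover. Take $\mu_0=\delta_0$. Then $\sup_x u_{n,0}\zeta\ge cn$, so $\tau_{n,k}=0$ for all large $n$. For the limit, $\tau_k>0$ would force $u(t)\le k/\zeta$ on $(0,\tau_k)$. Via the initial trace this gives $\mu_0\le (k/\zeta)\,\ud x$, which is false, so $\tau_k=0$ a.s. The stopped martingale identities are therefore vacuous.

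\textbf{The uniform integrability you invoke is not available.} \eqref{eq:localized_H2_estimate} gives only $\sup_n\E A_n^\varphi(T)<\infty$. This makes $M_n^\varphi(t)$ bounded in $L^2$, which is enough to pass to the limit in $\E[F_n(M_n^\varphi(t)-M_n^\varphi(s))]=0$. It is not enough for $\E[F_n\{M_n^\varphi(t)^2-A_n^\varphi(t)-M_n^\varphi(s)^2+A_n^\varphi(s)\}]=0$, which needs uniform integrability of $A_n^\varphi(t)$ and $M_n^\varphi(t)^2$, i.e. more than a first moment of the bracket. With only $\mu_0\in L_2(\Omega;H_2^{-1/2-\kappa})$, no such higher moment is uniform in $n$. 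The Step 4 $L_p$ bounds in the tightness proof concern $Z=\1_{\fA}Y_n$ with constants depending on $R$ and $d$. On $\fA^c$ you are left with an $L^1$ bound, which does not give uniform integrability. Also, lower semicontinuity of hitting times yields only $\tau_k\le\liminf_n\tau_{n,k}$. You need actual convergence of $\tau_{n,k}$ to compare $M_n^\varphi(\cdot\wedge\tau_{n,k})$ with $M^\varphi(\cdot\wedge\tau_k)$.

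\textbf{How the paper fixes this.} It localizes with the bracket itself, $\tau_{n,R}=\inf\{t:A_n^\varphi(t)\ge R\}\wedge T$.
\begin{itemize}
\item This is nondegenerate because $A_n^\varphi$ is continuous with $A_n^\varphi(0)=0$.
\item BDG gives $\E\sup_t|M_n^\varphi(t\wedge\tau_{n,R})|^4\le NR^2$, which supplies the uniform integrability.
\item The paper proves $\sup_t|A_n^\varphi-A^\varphi|\to0$ in probability, using $h(\delta)=\sup_n\E A_n^\varphi(\delta)\to0$.
\item It selects levels $R_j$ at which $A^\varphi$ a.s.\ has no flat piece (a countable exceptional set plus Tonelli), so that $\tau_{n,R_j}\to\tau_{R_j}$ in probability.
\end{itemize}

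A sup-norm localization started at a positive time $\delta$ would also work. You would prove the identities on $[\delta,T]$, then let $\delta\downarrow0$ using $\E M^\varphi(\delta)^2+\E A^\varphi(\delta)\le 2h(\delta)$. You would still need the same kind of level selection.

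\textbf{A smaller gap at $t=0$.} Your claim that $(u(t),\varphi)\to(\mu_0,\varphi)$ ``because the right-hand side is continuous'' presupposes $M^\varphi(t)\to0$ as $t\downarrow0$, and this must be proved. The paper does so by transferring Doob's bound $\E\sup_{0<t\le\delta}|M^\varphi(t)|^2\le4h(\delta)$ to the limit via Fatou over finite sets of times.
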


\begin{proof}
The construction of a solution is standard when the initial datum is a regular function. Two points require care here. First, no estimate on $u_n$ is uniform in $n$ up to $t=0$, so we use the small-time estimates of Lemma \ref{lem:uniform_localized_sobolev} to control this. Second, no moment of the quadratic variation of $M_n^\varphi$  beyond the first is uniform in $n$ either, which forces a localization by stopping times.

Throughout, $\varphi\in C_c^\infty(\bR)$ is fixed, $K_\varphi$ is a compact set
containing $\operatorname{supp}\varphi$, and $m\ge1$ is chosen with
$K_\varphi\subset[-m,m]$. We also fix $\kappa\in(0,1/2)$. 

\medskip
\emph{Step 1: Skorokhod representation and the martingale problem.}
By Proposition~\ref{prop:tightness} the laws of $(u_n)_{n\ge1}$ are tight in
$C((0,T];C_{\mathrm{tem}})$. Recalling Assumption~\ref{asp:coefficients}, we
apply Skorokhod's representation theorem to the laws of the single random
variable $(\mu_0,a,b,c,u_n)$, with values in the Polish space
$H_2^{-1/2-\kappa}\times\cC\times C((0,T];C_{\rm tem})$. Thus, after passing to
a subsequence, we may work on a new probability space on which
$\cL(\mu_0^n,a_n,b_n,c_n,u_n)=\cL(\mu_0,a,b,c,u_n)$ for every $n\ge1$ and
\begin{equation}\label{eq:convergence_law}
    (\mu_0^n,a_n,b_n,c_n,u_n)
    \longrightarrow
    (\widetilde\mu_0,\widetilde a,\widetilde b,\widetilde c,u)
    \quad a.s.\quad\text{in }
    H_2^{-1/2-\kappa}\times\cC\times C((0,T];C_{\rm tem}).
\end{equation}
In particular $u_n\to u$ locally uniformly on $(0,T]\times\bR$ almost surely,
and $\cL(\widetilde\mu_0,\widetilde a,\widetilde b,\widetilde c)=\cL(\mu_0,a,b,c)$. 
For simplicity we keep the notation $(\mu_0,a,b,c)$ for the limit and also use $L:=a\,\partial_x^2+b\,\partial_x+c$ while $L_n:= a_n\,\partial_x^2+b_n\,\partial_x+c_n$.

On the new probability space, let $(\cG^n_t)_{0\le t\le T}$ and
$(\cG_t)_{0\le t\le T}$ be the usual augmentations of the filtrations generated
respectively by
\[
    \mu_0^n,\quad (a_n,b_n,c_n)\big|_{[0,t]\times\bR},
    \quad \{u_n(r,\cdot):r\in(0,t]\}
    \qquad\text{and}\qquad
    \mu_0,\quad (a,b,c)\big|_{[0,t]\times\bR},
    \quad \{u(r,\cdot):r\in(0,t]\} .
\]
By the almost sure convergence in \eqref{eq:convergence_law}, the coefficient field $(a,b,c)$ satisfies the conditions in Assumption~\ref{asp:coefficients} up to a null set. Therefore, defining $a=1$, $b=0$ and $c=0$ on this null set, we have that $(a,b,c)$ satisfies Assumption~\ref{asp:coefficients}.  Since $\cL(\mu_0^n,a_n,b_n,c_n,u_n)=\cL(\mu_0,a,b,c,u_n)$, the estimates
\eqref{eq:localized_H2_estimate} and \eqref{eq:localized_small_time_estimate}
continue to hold for the copies. We use this without further comment.

Define, for $n\ge1$,
\[
    u_{n,0}:=\Theta_n*\mu_0^n,\qquad \Theta_n(x):=n\Theta(nx),
    \qquad L_n:=a_n\partial_x^2+b_n\partial_x+c_n ,
\]
and
\begin{equation}\label{eq:def_Mn}
    M_n^\varphi(t):=(u_n(t),\varphi)-(u_{n,0},\varphi)
        -\int_0^t\bigl(u_n(s),L_n^*\varphi(s)\bigr)\,\ud s ,
    \qquad
    A_n^\varphi(t):=\int_0^t\!\!\int_{\bR}\sigma_n(u_n(s,x))^2\varphi(x)^2\,\ud x\,\ud s .
\end{equation}
We claim that
\begin{equation}\label{eq:copy_martingale}
    M_n^\varphi
    \quad\text{and}\quad
    (M_n^\varphi)^2-A_n^\varphi
    \qquad\text{are $(\cG^n_t)$-martingales.}
\end{equation}
Indeed, in Section~\ref{subsec:approximation-uniform-estimates} the
corresponding processes are martingales, since $u_n$ solves
\eqref{eq:regularized-measure-equation}. As they are adapted to the filtration
generated by the tuple, they are martingales for that filtration as well, and
\eqref{eq:copy_martingale} follows. 

\medskip 
\emph{Step 2: The limiting processes.}
Define, for $t\in(0,T]$,
\begin{equation}\label{eq:def_M_limit}
     M^\varphi(t):=(u(t),\varphi)-(\mu_0,\varphi)
        -\int_0^t\bigl(u(s),L^*\varphi(s)\bigr)\,\ud s,
    \qquad
    A^\varphi(t):=\int_0^t\!\!\int_{\bR}\sigma(u(s,x))^2\varphi(x)^2\,\ud x\,\ud s.
\end{equation}
In this step we show that these processes are well defined and continuous, and
that $M_n^\varphi(t)\to M^\varphi(t)$ in probability for every $t\in(0,T]$.

We begin with two convergences. Since $u_n\to u$ locally uniformly on
$(0,T]\times\bR$, $\sigma_n\to\sigma$ uniformly on compact subsets of
$[0,\infty)$, and $L_n^*\varphi\to L^*\varphi$ uniformly on
$[0,T]\times K_\varphi$ by \eqref{eq:convergence_law}, we have that  for every $\ep\in(0,T)$, almost surely, 
\begin{equation}\label{eq:unif_conv_compacts}
    \sup_{[\ep,T]\times K_\varphi}\bigl|\sigma_n(u_n)^2-\sigma(u)^2\bigr|
    \longrightarrow0
    \qquad\text{and}\qquad
    \sup_{s\in[\ep,T]}
    \bigl|(u_n(s),L_n^*\varphi(s))-(u(s),L^*\varphi(s))\bigr|
    \longrightarrow0.
\end{equation}

Next we control the quadratic variation and the drift term of $M_n^\varphi$
near $t=0$, uniformly in $n$. Define, for $\delta\in(0,T]$,
\[
    h(\delta):=\sup_{n\ge1}\bE A_n^\varphi(\delta)
    \qquad\text{and}\qquad
    \rho(\delta):=\sup_{n\ge1}\bE\int_0^\delta
        \bigl|(u_n(s),L_n^*\varphi(s))\bigr|\,\ud s,
\]
both of which are finite at $\delta=T$ by \eqref{eq:localized_H2_estimate}.
Since $L_n^*\varphi$ is supported in $K_\varphi$ and the coefficients are
uniformly bounded together with the derivatives appearing in $L_n^*$,
\eqref{eq:localized_small_time_estimate} with $\eta=0$ implies
\begin{equation}\label{eq:small_time_drift}
    \rho(\delta)\le N_\varphi\,\delta^{1/2}
        \sup_{n\ge1}\|u_n\zeta_m\|_{\bL_2(\delta)}
    \longrightarrow0\qquad\text{as }\delta\downarrow0,
\end{equation}
and, since $|\sigma_n(r)|\le N(r^\gamma+r)\le N(1+r)$ for $r\ge0$ by \eqref{eq:sigma-n-properties}, \eqref{eq:localized_small_time_estimate} again implies
\begin{equation}\label{eq:small_time_qv}
    h(\delta)\le N_\varphi\delta
        +N_\varphi\sup_{n\ge1}\|u_n\zeta_m\|_{\bL_2(\delta)}^2
    \longrightarrow0\qquad\text{as }\delta\downarrow0.
\end{equation}

We now transfer these bounds to the limits. Fix $\ep\in(0,\delta)$. By \eqref{eq:unif_conv_compacts} and Fatou's lemma, we obtain 
\[
    \E \int_\ep^\delta\!\!\int_{\bR}\sigma(u)^2\varphi^2\,\ud x\,\ud s
    \leq  \liminf_{n\to\infty} \E \int_\ep^\delta\!\!\int_{\bR}\sigma_n(u_n)^2\varphi^2\,\ud x\,\ud s
    \le h(\delta).
\] 
Letting
$\ep\downarrow0$, and arguing in the same way for the
drift term $\int_0^\delta
        \bigl|(u(s),L^*\varphi(s))\bigr|\,\ud s$, we obtain
\begin{equation}\label{eq:limit_small_time}
    \bE A^\varphi(\delta)\le h(\delta)
    \qquad\text{and}\qquad
    \bE\int_0^\delta\bigl|(u(s),L^*\varphi(s))\bigr|\,\ud s\le\rho(\delta),
    \qquad\delta\in(0,T].
\end{equation}
In particular, taking $\delta=T$,
\begin{equation}\label{eq:A_limit_finite}
    \bE A^\varphi(T)<\infty
    \qquad\text{and}\qquad
    \bE\int_0^T\bigl|(u(s),L^*\varphi(s))\bigr|\,\ud s<\infty.
\end{equation}
Hence $M^\varphi$ and $A^\varphi$ are well defined, and both are continuous. In particular $A^\varphi$ is finite and nondecreasing on $[0,T]$ with $A^\varphi(0)=0$.

It remains to prove the convergence of $M_n^\varphi$ and $A_n^\varphi$. For the
initial term, we have $(u_{n,0},\varphi)=(\mu_0^n,\widetilde\Theta_n*\varphi)$
with $\widetilde\Theta_n(x):=\Theta_n(-x)$, and
$\widetilde\Theta_n*\varphi\to\varphi$ in $H_2^{1/2+\kappa}(\bR)$ while
$\mu_0^n\to\mu_0$ in $H_2^{-1/2-\kappa}$, so that
\begin{equation}\label{eq:initial_convergence}
    (u_{n,0},\varphi)\longrightarrow(\mu_0,\varphi)\qquad\text{a.s.}
\end{equation}
For the drift term, fix $\delta\in(0,T)$. By the second convergence in \eqref{eq:unif_conv_compacts} and the local uniform convergence of $u_n$ to $u$ (see \eqref{eq:convergence_law}), we have  
\begin{equation}\label{eq:large_time_drift}
    \Gamma_n(\delta):=\sup_{t\in[\delta,T]}
    \Bigl|\bigl\{M_n^\varphi(t)-M_n^\varphi(\delta)\bigr\}
        -\bigl\{M^\varphi(t)-M^\varphi(\delta)\bigr\}\Bigr|
    \longrightarrow0\qquad\text{a.s.}
\end{equation}
Combining this with \eqref{eq:small_time_drift}, \eqref{eq:limit_small_time}
and \eqref{eq:initial_convergence}, we obtain
\begin{equation}\label{eq:M_convergence}
    M_n^\varphi(t)\longrightarrow M^\varphi(t)
    \qquad\text{in probability, for every }t\in(0,T].
\end{equation}
Similarly, since $A_n^\varphi$ and $A^\varphi$ are nondecreasing, for every
$t\in[0,T]$
\begin{equation}\label{eq:large_time_qv}
    \bigl|A_n^\varphi(t)-A^\varphi(t)\bigr|
    \le A_n^\varphi(\delta)+A^\varphi(\delta)
    +\int_\delta^T\!\!\int_{\bR}
        \bigl|\sigma_n(u_n(s,x))^2-\sigma(u(s,x))^2\bigr|\varphi(x)^2\,\ud x\,\ud s,
\end{equation}
where the last term tends to $0$ almost surely by the first convergence in
\eqref{eq:unif_conv_compacts}, while the first two have expectation at most
$2h(\delta)$ by \eqref{eq:limit_small_time}. Hence, by Chebyshev's inequality
and \eqref{eq:small_time_qv}, we have 
\begin{equation}\label{eq:convergence_qv}
    \sup_{0\le t\le T}\bigl|A_n^\varphi(t)-A^\varphi(t)\bigr|\longrightarrow0
    \qquad\text{in probability.}
\end{equation}

\medskip 
\emph{Step 3: Uniform convergence of $M_n^\varphi$ in time.}
In this step we show that $M_n^\varphi\to M^\varphi$ in probability, uniformly
in time, and that $M^\varphi$ extends continuously to $t=0$ with
$M^\varphi(0)=0$.

By \eqref{eq:copy_martingale} and Doob's maximal inequality,
\begin{equation}\label{eq:BDG_M}
    \sup_{n\ge1}\bE\sup_{0\le t\le\delta}|M_n^\varphi(t)|^2
    \le 4\sup_{n\ge1}\bE A_n^\varphi(\delta)=4h(\delta).
\end{equation}
For a finite $F\subset(0,\delta]$, \eqref{eq:M_convergence} gives
$\max_{t\in F}|M_n^\varphi(t)|\to\max_{t\in F}|M^\varphi(t)|$ in probability, so
Fatou's lemma and \eqref{eq:BDG_M} yield
$\bE\max_{t\in F}|M^\varphi(t)|^2\le4h(\delta)$. Letting $F$ increase to a
countable dense subset of $(0,\delta]$ and using the continuity of $M^\varphi$
on $(0,T]$, we obtain
\begin{equation}\label{eq:BDG_M_limit}
    \bE\sup_{0<t\le\delta}|M^\varphi(t)|^2\le4h(\delta),\qquad\delta\in(0,T].
\end{equation}
Since $\delta\mapsto\sup_{0<t\le\delta}|M^\varphi(t)|$ decreases as $\delta\downarrow0$ and
tends to $0$ in probability by \eqref{eq:BDG_M_limit} and
\eqref{eq:small_time_qv}, it tends to $0$ almost surely. Hence
$M^\varphi(t)\to0$ a.s. as $t\downarrow0$, and setting $M^\varphi(0):=0$ makes
$M^\varphi$ continuous on $[0,T]$. Equivalently, by \eqref{eq:small_time_drift} and \eqref{eq:limit_small_time},
\begin{equation}\label{eq:initial_attained}
    (u(t),\varphi)\longrightarrow(\mu_0,\varphi)\qquad\text{a.s. as }t\downarrow0.
\end{equation}

Now fix $\delta\in(0,T)$. Then, we have 
\[
    \sup_{0\le t\le T}\bigl|M_n^\varphi(t)-M^\varphi(t)\bigr|
    \le\sup_{r\le\delta}|M_n^\varphi(r)|+\sup_{r\le\delta}|M^\varphi(r)|
    +\bigl|M_n^\varphi(\delta)-M^\varphi(\delta)\bigr|+\Gamma_n(\delta).
\]
The last two terms tend to $0$ in probability by \eqref{eq:M_convergence} and
\eqref{eq:large_time_drift}, while the first two have expectation at most
$4h(\delta)^{1/2}$ by \eqref{eq:BDG_M}, \eqref{eq:BDG_M_limit} and the
Cauchy--Schwarz inequality. Letting first $n\to\infty$ and then
$\delta\downarrow0$, and using \eqref{eq:small_time_qv}, we conclude that
\begin{equation}\label{eq:M_uniform_convergence}
    \sup_{0\le t\le T}\bigl|M_n^\varphi(t)-M^\varphi(t)\bigr|\longrightarrow0
    \qquad\text{in probability.}
\end{equation}

\medskip 
\emph{Step 4: Convergence of the stopping times.}
In this step we construct a sequence of deterministic levels $R_j\uparrow\infty$
along which the corresponding hitting times of $A_n^\varphi$ converge to those
of $A^\varphi$. This will be used in the next step, where the martingale identities
are passed to the limit after stopping. For $R>0$, define stopping times (with respect to $(\cG^n_t)$ and $(\cG_t)$ respectively) 
\[
    \tau_{n,R}:=\inf\{t\ge0:A_n^\varphi(t)\ge R\}\wedge T
    \qquad\text{and} \qquad 
    \tau_{R}:=\inf\{t\ge0:A^\varphi(t)\ge R\}\wedge T.
\]
We claim that there exist $R_j\uparrow\infty$ such that, for every $j\ge1$,
\begin{equation}\label{eq:stopping_time_convergence}
    \tau_{n,R_j}\longrightarrow\tau_{R_j}
    \qquad\text{in probability as }n\to\infty.
\end{equation}

By \eqref{eq:A_limit_finite} we have $A^\varphi(T)<\infty$ almost surely. Thus, on
every $\omega$ with $A^\varphi(T)(\omega)<\infty$, we define 
\[
    \fB(\omega):=\Bigl\{R>0:
        \inf\{t\ge0:A^\varphi(t)(\omega)\ge R\}
        <\inf\{t\ge0:A^\varphi(t)(\omega)>R\}\Bigr\},
\]
and $\fB(\omega):=\varnothing$ otherwise. If $R\in\fB(\omega)$, then
$A^\varphi(\cdot)(\omega)$ is constant equal to $R$ on a nondegenerate
interval. Since distinct such levels give disjoint intervals, $\fB(\omega)$ is at
most countable and $\mathrm{Leb}(\fB(\omega))=0$. Thus, we apply Tonelli's theorem to get that 
\[
    \int_0^\infty\P(R\in\fB)\,\ud R=\bE\bigl[\mathrm{Leb}(\fB)\bigr]=0. 
\]
Therefore,  we have $\P(R\in\fB)=0$ for a.e.\ $R>0$, which allows us to choose deterministic $R_j\in(j,j+1)$ with
\begin{equation}\label{eq:continuity_point}
    \inf\{t\ge0:A^\varphi(t)\ge R_j\}=\inf\{t\ge0:A^\varphi(t)>R_j\}
    \quad\text{a.s., for every }j\ge1.
\end{equation}
Now we prove \eqref{eq:stopping_time_convergence}. Indeed, it suffices to consider an arbitrary subsequence and show that it has a further subsequence along which the convergence holds almost
surely. By \eqref{eq:convergence_qv} we may pass to a further subsequence,
still denoted by $n$, along which
$\sup_{0\le t\le T}|A_n^\varphi(t)-A^\varphi(t)|\to0$ almost surely. Fix
$\omega$ in the almost sure event on which this holds and on which
\eqref{eq:continuity_point} holds for every $j$, and suppose
$\tau_{R_j}(\omega)\in(0,T)$. Then, for all sufficiently small $\ep>0$,
\[
    A^\varphi(\tau_{R_j}-\ep)(\omega)<R_j<A^\varphi(\tau_{R_j}+\ep)(\omega).
\]
Hence, for all sufficiently large $n$,
\[
    A_n^\phi(\tau_{R_j}-\ep)(\omega)
    <R_j<
   A_n^\varphi(\tau_{R_j}+\ep)(\omega),
\]
which implies
\[
    \tau_{R_j}-\ep
    <\tau_{n,R_j}
    <\tau_{R_j}+\ep.
\]
The cases $\tau_{R_j}=0$ and $\tau_{R_j}=T$ follow by the corresponding
one-sided argument. This proves \eqref{eq:stopping_time_convergence}.

\medskip 
\emph{Step 5: The martingale problem for the limit.}
In this step we pass to the limit and show that $\langle M^\varphi\rangle=A^\varphi$. 
Fix $j\ge1$ and write $R=R_j$. By \eqref{eq:M_uniform_convergence},
\eqref{eq:convergence_qv}, \eqref{eq:stopping_time_convergence} and the uniform
continuity of the paths of $M^\varphi$ and $A^\varphi$,
\begin{equation}\label{eq:stopped_MA_convergence}
    \sup_{0\le t\le T}\bigl|M_n^\varphi(t\wedge\tau_{n,R})-M^\varphi(t\wedge\tau_{R})\bigr|
    +\sup_{0\le t\le T}\bigl|A_n^\varphi(t\wedge\tau_{n,R})-A^\varphi(t\wedge\tau_{R})\bigr|
    \longrightarrow0
\end{equation}
in probability. Since $A_n^\varphi(\tau_{n,R})\le R$ by continuity, the
Burkholder--Davis--Gundy inequality gives
\begin{equation}\label{eq:M_unif_integrability}
    \sup_{n\ge1}\bE\Bigl[\sup_{0\le r\le T}
        \bigl|M_n^\varphi(r\wedge\tau_{n,R})\bigr|^4\Bigr]
    \le N\sup_{n\ge1}\bE\bigl[A_n^\varphi(\tau_{n,R})^2\bigr]\le NR^2. 
\end{equation}
This shows that  $\{M_n^\varphi(t\wedge\tau_{n,R})\}_n$ and $\{M_n^\varphi(t\wedge\tau_{n,R})^2\}_n$ are uniformly integrable.

Fix $0\le s<t\le T$, $0<r_1<\cdots<r_k\le s$, and a bounded continuous $G$ on
$H_2^{-1/2-\kappa}\times\cC_s\times(C_{\rm tem})^k$, where $\cC_s$ is the space
\eqref{eq:def_coefficient_space} with $[0,T]$ replaced by $[0,s]$. Put
\[
    F_n:=G\bigl(\mu_0^n,(a_n,b_n,c_n)|_{[0,s]\times\bR},u_n(r_1),\dots,u_n(r_k)\bigr),
\]
and let $F$ be defined analogously from the limiting variables, so that
$\|F_n\|_\infty\le\|G\|_\infty$ and $F_n\to F$ a.s. by
\eqref{eq:convergence_law}. By \eqref{eq:copy_martingale} and optional
stopping,
\[
    \bE\Bigl[F_n\bigl(M_n^\varphi(t\wedge\tau_{n,R})-M_n^\varphi(s\wedge\tau_{n,R})\bigr)\Bigr]=0,
\]
\[
    \bE\Bigl[F_n\bigl\{M_n^\varphi(t\wedge\tau_{n,R})^2-A_n^\varphi(t\wedge\tau_{n,R})
        -M_n^\varphi(s\wedge\tau_{n,R})^2+A_n^\varphi(s\wedge\tau_{n,R})\bigr\}\Bigr]=0.
\]
By \eqref{eq:stopped_MA_convergence} and \eqref{eq:M_unif_integrability}, we get that as $n \to \infty$ 
\begin{equation}\label{eq:limit_identity}
\begin{aligned}
    &\bE\Bigl[F\bigl(M^\varphi(t\wedge\tau_{R})-M^\varphi(s\wedge\tau_{R})\bigr)\Bigr]=0,\\
    &\bE\Bigl[F\bigl\{M^\varphi(t\wedge\tau_{R})^2-A^\varphi(t\wedge\tau_{R})
        -M^\varphi(s\wedge\tau_{R})^2+A^\varphi(s\wedge\tau_{R})\bigr\}\Bigr]=0.
\end{aligned}
\end{equation}
By a monotone class argument, $M^\varphi(\cdot\wedge\tau_R)$ and
$M^\varphi(\cdot\wedge\tau_R)^2-A^\varphi(\cdot\wedge\tau_R)$ are
$(\cG_t)$-martingales, and  $\langle M^\varphi(\cdot\wedge\tau_R)\rangle_t=A^\varphi(t\wedge\tau_R)$. By
\eqref{eq:A_limit_finite} we have $\tau_{R_j}=T$ for all large $j$ almost
surely, so letting $j\to\infty$ shows that $M^\varphi$ is a continuous
square-integrable $(\cG_t)$-martingale with
\begin{equation}\label{eq:limit_bracket}
    \langle M^\varphi\rangle_t=A^\varphi(t)
    =\int_0^t\!\!\int_{\bR}\sigma(u(s,x))^2\varphi(x)^2\,\ud x\,\ud s,
\end{equation}
and, by polarization,
\begin{equation}\label{eq:limit_bracket_joint}
    \langle M^\varphi,M^\psi\rangle_t
    =\int_0^t\!\!\int_{\bR}\sigma(u(s,x))^2\varphi(x)\psi(x)\,\ud x\,\ud s,
    \qquad\varphi,\psi\in C_c^\infty(\bR).
\end{equation}
Therefore, we conclude that there exists a space-time white noise $\widetilde\xi$, on an enlarged probability space and relative to the correspondingly enlarged filtration, such that
\[
    M^\varphi(t)=\int_0^t\!\!\int_{\bR}\sigma(u(s,x))\varphi(x)\,\widetilde\xi(\ud s,\ud x),
    \qquad\varphi\in C_c^\infty(\bR).
\]
Together with \eqref{eq:def_M_limit} and
\eqref{eq:initial_attained} this proves \eqref{eq:measure_limit}, and
conditions (i) and (ii) of Definition \ref{def:weak_soln} hold by
\eqref{eq:convergence_law} and \eqref{eq:A_limit_finite}. Hence $u$ is a
nonnegative solution of \eqref{eq:SPDE} with initial measure $\mu_0$.

\medskip
\emph{Step 6: The weighted mass estimate.}
Assume $\bE W_q(\mu_0)<\infty$ and let $\alpha\in(0,1)$. Following the proof of
Lemma~\ref{lem:mtg_bound_w} with Remark~\ref{rmk:L_epsilon_bound_v_n},
\begin{equation*}
    \bE\Bigl[\sup_{0\le s\le T}W_q(u_n(s))^\alpha\Bigr]
    \le N\,\bE\bigl[W_q(u_{n,0})^\alpha\bigr],
\end{equation*}
with $N=N(\alpha,q,\nu,T)$ independent of $n$. Since $\operatorname{supp}\Theta\subset[0,1]$, the change of variables
$x=y+z/n$ gives
\[
    W_q(u_{n,0})
    =\int_{\bR}\int_0^1\Phi_q(y+z/n)\Theta(z)\,\ud z\,\mu_0^n(\ud y)
    \le N_qW_q(\mu_0^n),
\]
where we used that $\Phi_q(y+w)\le N_q\Phi_q(y)$ for $|w|\le1$. Hence, by
Jensen's inequality and $\cL(\mu_0^n)=\cL(\mu_0)$, we have 
\begin{equation}\label{eq:uniform_weighted_un}
    \sup_{n\ge1}\bE\bigl[W_q(u_{n,0})^\alpha\bigr]
    \le N_q\bigl(\bE W_q(\mu_0)\bigr)^\alpha<\infty. 
\end{equation}

Fix $\delta\in(0,T)$ and $R>0$. Since $u_n\to u$ uniformly on
$[\delta,T]\times[-R,R]$, 
\[
    \sup_{t\in[\delta,T]}\int_{-R}^{R}\Phi_q(x)u(t,x)\,\ud x
    =\lim_{n\to\infty}\sup_{t\in[\delta,T]}\int_{-R}^{R}\Phi_q(x)u_n(t,x)\,\ud x
    \le\liminf_{n\to\infty}\sup_{0\le t\le T}W_q(u_n(t)).
\]
By monotone convergence, letting $R\to\infty$ gives
\[
    \sup_{t\in[\delta,T]}W_q(u(t))
    \le
    \liminf_{n\to\infty}
    \sup_{0\le t\le T}W_q(u_n(t)).
\]
Thus, Fatou's lemma and \eqref{eq:uniform_weighted_un} implies \eqref{eq:uniform_weighted_u}, which completes the proof. 
\end{proof}

\begin{remark}
We use the classical Skorokhod representation theorem to the law of the entire tuple $(\mu_0,a,b,c,u_n)$, so that the copies of both the data and the solution to depend on $n$; see \cite{OS25} for related discussion.
% We use the classical Skorokhod representation theorem, applied to the law of
% the single random variable $(\mu_0,a,b,c,u_n)$, so that the copy of the entire
% tuple depends on $n$. We do not use the strengthened version  in which the data are held fixed while only the solutions converge almost surely, which is false
% by \cite{OS25}.
\end{remark}

% =====================================================================

\section{Positive-time regularization and proof of Theorem \ref{thm:CSP_measure_initial}}
\label{sec:proof_thm_measure}

In the previous section we constructed a solution from the measure-valued initial data and obtained the weighted mass bounds needed below. By uniqueness in law for the initial law $\mathcal L(\mu_0)$, these path-law properties also hold for any nonnegative solution with initial measure $\mu_0$.

The purpose of this section is to prove the compact support property for such solutions. The key point is that, although the initial state may be a measure, the solution becomes sufficiently regular at every positive time. We prove this by a time-cutoff argument in the following section. 

\subsection{Time-cutoff regularization}
Recall that $\gamma\in(0,1)$ and $\lambda\in[1,2)$ are the constants in
Assumption \ref{asp:sigma}. Also recall that
\[
    W_q(f)=\int_{\bR}(1+x^2)^{q/2}\,f(\ud x),
\]
with the convention $f(\ud x)=f(x)\,\ud x$ when $f$ is a function.
\begin{proposition}
\label{prop:positive_time_Sobolev}
Let $u\in C((0,T];C_{\rm tem})$ be a nonnegative solution of \eqref{eq:SPDE} with initial measure $\mu_0$, and fix $t_0\in(0,T)$. Assume that for some $q_*\ge0$,
\begin{equation}
\label{eq:weight_bound}
    \sup_{r\in[\delta,T]} W_{q_*}(u(r))<\infty
    \qquad
    \text{a.s. for every }\delta\in(0,T).
\end{equation}
Assume also one of the following two alternatives:
\begin{enumerate}
\item[\rm (a)]
$q_*=0$ and $p>6$ is chosen so that
\[
    p\gamma\ge1.
\]

\item[\rm (b)]
$q_*>0$ and $p>6$ is chosen so that
\[
    p\gamma>\frac1{q_*+1}.
\]
\end{enumerate}
If $\lambda\in(1,2)$, we additionally require
\[
    q_*>\frac{\lambda-1}{2-\lambda}.
\]

Then there exists a non-random $\beta>1/p$, depending only on $p$,  such that
\[
    u(t_0,\cdot)\in H_p^\beta(\bR)
    \qquad\text{a.s.}
\]
\end{proposition}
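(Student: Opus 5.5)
We plan to follow the time-cutoff strategy announced in the introduction. Fix a smooth $\chi:\bR\to[0,1]$ with $\chi=0$ on $[0,t_0/4]$ and $\chi=1$ on $[t_0/2,T]$, and a weight $\zeta_m$ from \eqref{eq:zeta}. The process $Z:=\chi(t)\zeta_m u$ then has zero initial datum and, in the sense of distributions, solves
\[
    \ud Z=\bigl[LZ-\chi\{2a(u\zeta_m')_x-au\zeta_m''+bu\zeta_m'\}+\chi'(t)\zeta_m u\bigr]\,\ud t+\chi\zeta_m\sigma(u)\,\xi(\ud t,\ud x).
\]
Since $Z(t_0)=\zeta_m u(t_0)$ and $\zeta_m>0$ is a smooth multiplier with $\zeta_m^{-1}$ locally bounded, membership $Z(t_0)\in H_p^\beta$ for one $m$ gives $u(t_0)\in H_p^\beta$ locally. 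To get the global statement, we work directly with $m$ fixed and use that $\zeta_m^{-1}$ is a pointwise multiplier only locally. Alternatively, we run the argument with the weight replaced by $1$, since all the free terms below turn out to be controlled by global quantities. We prefer the second route, with $\zeta_m$ kept only to justify finiteness, and we let $m\to\infty$ at the end as in Lemma~\ref{lem:uniform_bound}.

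Because the bounds \eqref{eq:weight_bound} are only almost sure, not in expectation, we first localize. For $K>0$ let $\tau_K$ be the first time $t\ge t_0/4$ at which $\sup_{r\in[t_0/4,t]}W_{q_*}(u(r))$ or $\sup_{r\in[t_0/4,t]}\sup_x u(r,x)\zeta_m(x)$ exceeds $K$, capped at $T$. By \eqref{eq:weight_bound} and continuity of $u$ in $C_{\rm tem}$ on $[t_0/4,T]$, we have $\P(\tau_K<T)\to0$. On $\opar0,\tau_K\cbrk$ the free terms are estimated as in Lemma~\ref{lem:uniform_bound}, with exponent $\eta\in(0,1/2)$ chosen so that $\eta-2/p>1/p$, i.e. $\eta p>3$; this is possible since $p>6$.

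\begin{itemize}
\item The drift correction and the new term $\chi'\zeta_m u$ lie in $H_p^{\eta-2}$, with norm bounded by $N\|u\zeta_m\|_{L_p}$. This is at most $N K^{(p-1)/p}\,W_0(u)^{1/p}$ because $u\zeta_m\le K$, so the term is bounded.
\item By Lemma~\ref{lem:white_noise_estimate}, the noise term requires $\int(u^{p\gamma}+u^p)\zeta_m^p\,\ud x$. The part $u^p$ is treated as above.
\item In case (a), $p\gamma\ge1$, so $u^{p\gamma}\le K^{p\gamma-1}u$ is controlled by the mass.
\item In case (b), $u^{p\gamma}$ is controlled by the H\"older interpolation \eqref{eq:Holder_epsilon} with $\epsilon=p\gamma>1/(q_*+1)$ against $W_{q_*}(u)\le K$.
\end{itemize}

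All free terms are therefore bounded by constants depending on $K$. Theorem~\ref{thm:krylov_lp_estimate}, applied on $\opar0,\tau_K\cbrk$ with zero initial datum, gives $Z\in\cH_p^\eta(\tau_K)$. This requires identifying $Z$ with the $\cH_p^\eta$ solution by uniqueness in $\cH_p^0$, exactly as in Lemma~\ref{lem:uniform_bound}.

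Next, choose $\ba,\bb$ with $1/p<\ba<\bb<\tfrac12(\eta-3/p)$, which is possible after lowering nothing, since $\eta p>3$ can be arranged with a margin; if needed, take $\eta$ close to $1/2$ and use $p>8$-type room. Theorem~\ref{thm:embedding_theorems}\eqref{embedding in time} then gives $Z\in C([0,\tau_K];H_p^{\eta-2\bb})$ almost surely, with $\beta:=\eta-2\bb>1/p$. This $\beta$ depends only on $p$ through the choice of $\eta,\bb$. On $\{\tau_K=T\}$ we get $Z(t_0)=\zeta_m u(t_0)\in H_p^\beta$. Letting $K\to\infty$ yields the claim almost surely, and uniform-in-$m$ bounds with $m\to\infty$, as in Lemma~\ref{lem:uniform_bound}, remove the weight.

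The main obstacle is that the Krylov estimate is in expectation while our controls are only almost sure. The stopping time $\tau_K$ must make every free term bounded by constants, which is why it includes a stopped sup-norm of $u\zeta_m$; that quantity is finite only because $u\in C((0,T];C_{\rm tem})$ and $\chi$ vanishes near $0$. The requirement $q_*>(\lambda-1)/(2-\lambda)$ plays no role here; it is only carried along for the later application of Theorem~\ref{thm:CSP_regular_initial}.
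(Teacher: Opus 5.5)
There is a genuine gap in your last step, where you remove the spatial weight. Every bound you put on the free terms comes from the stopping time $\tau_K=\tau_K^{(m)}$, which caps $\sup_x u(r,x)\zeta_m(x)$ at $K$. This cap is what gives $\|u\zeta_m\|_{L_p}^p\le K^{p-1}W_0(u)$ and, when $p\gamma\ge1$, $u^{p\gamma}\zeta_m^p\le(u\zeta_m)^{p\gamma}\le K^{p\gamma-1}u\zeta_m$. For fixed $m$ your argument only yields $\zeta_m u(t_0)\in H_p^\beta$. That is local information, because $\zeta_m^{-1}=\cosh(\cdot/m)$ is not a multiplier on $H_p^\beta$. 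The proposition asks for $u(t_0)\in H_p^\beta(\bR)$, and this global membership is exactly what the restart through Theorem~\ref{thm:CSP_regular_initial} needs.

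Letting $m\to\infty$ does not repair this. Your constants $N_K$ are independent of $m$, but the localizing events are not. Since $u\zeta_m\uparrow u$, the times $\tau_K^{(m)}$ decrease in $m$, and $\bigcap_m\{\tau_K^{(m)}\ge t_0\}\subseteq\{\sup_{[t_0/4,t_0]\times\bR}u\le K\}$. Nothing in the hypotheses makes $\sup_x u(r,x)$ finite: a nonnegative element of $C_{\rm tem}$ with finite weighted mass may carry ever taller and thinner bumps as $|x|\to\infty$. Boundedness of $u(t_0)$ is a consequence of the conclusion (since $\beta>1/p$), so using it is circular. The same objection applies to your ``weight replaced by $1$'' route, whose stopping quantity $\sup_x u(r,x)$ may be infinite from the start.

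The missing idea is to control $\|u\|_{L_p}$ without any sup-norm information. The paper takes $v=\chi u$ with no spatial weight. It localizes only on the weighted mass (the time $\tau_R$), plus an auxiliary level $L$ for $\int_0^t\|v(r)\|_{L_p}^p\,\ud r$ that merely makes Theorem~\ref{thm:krylov_lp_estimate} applicable. The estimates then go as follows.
\begin{itemize}
\item The term $\chi'u$ is bounded in $H_p^{\theta-2}$ directly by the mass, because $R_{2-\theta}\in L_p$.
\item The term $v^{p\gamma}$ is handled by $y^{p\gamma}\le N(y+y^p)$ or by \eqref{eq:Holder_epsilon}.
\item The remaining $\|v\|_{L_p}^p$ is interpolated as $\varepsilon\|v\|_{H_p^\theta}^p+N_\varepsilon\|v\|_{H_p^{-2}}^p$, with $\|v\|_{H_p^{-2}}\le\|R_2\|_{L_p}\int v\,\ud x\le N_R$. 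The $\varepsilon$-term is then absorbed into the left-hand side.
\end{itemize}
The resulting bound depends only on $R$, so the auxiliary localization can be removed, and $\tau_R\uparrow T$ by \eqref{eq:weight_bound}. Alternatively, you could keep $\zeta_m$, stop on the mass plus an auxiliary finiteness level, and close with Gronwall as in Lemma~\ref{lem:uniform_bound}; this also gives bounds independent of $m$. Either way, the sup-norm stopping has to go.

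There are also two smaller slips.
\begin{itemize}
\item The constraint $\bb<\frac12(\eta-3/p)$ should read $\bb<\frac12(\eta-1/p)$. That is exactly what $\beta>1/p$ requires, and it is compatible with $p>6$. Your version forces $\eta p>5$, hence $p>10$.
\item In alternative (b) with $p\gamma\ge1$, \eqref{eq:Holder_epsilon} is unavailable because it needs $\epsilon<1$. There you must fall back on the case-(a) bound.
\end{itemize}
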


\begin{proof}
Let $t_0 \in (0, T)$. Choose $0<s<t_0$ and $\chi\in C^\infty([0,T])$ such that
\[
    0\le\chi\le1,\qquad
    \chi(t)=0\text{ for }t\le s/2,\qquad
    \chi(t)=1\text{ for }t\ge s.
\]
Set
\[
    v(t,x):=\chi(t)u(t,x).
\]
We first identify the equation satisfied by $v$.
For every $\varphi\in C_c^\infty(\bR)$, the weak formulation for $u$ gives us the following:
\[
\begin{aligned}
    (v(t),\varphi)
    &=
    \int_0^t (v(r),L^*\varphi(r))\,\ud r
    +
    \int_0^t (\chi'(r)u(r),\varphi)\,\ud r +
    \int_0^t\int_{\bR}
        \chi(r)\sigma(u(r,x))\varphi(x)\,\xi(\ud r,\ud x).
\end{aligned}
\]
Equivalently, in the sense of distributions,
\begin{equation}
\label{eq:cutoff_equation}
    dv=(L v+\chi'u)\,\ud t+\chi\sigma(u)\xi(\ud t,\ud x), \qquad v(0)=0.
\end{equation}
We now consider the drift and noise terms in \eqref{eq:cutoff_equation} to apply Theorem \ref{thm:krylov_lp_estimate}.  In order to do this, for $R,L\geq 1$, we introduce the stopping times
\[
\tau_R:=
    \inf\bigl\{r\in[s/2,T]:W_{q_*}(u(r))>R\bigr\}\wedge T \quad \text{and} \quad 
    \tau_{R,L}
    :=\tau_R\wedge \inf\left\{t\in[0,T]: \int_0^t\|v(r)\|_{L_p}^p\,\ud r>L
    \right\}.
\]
Choose
\[\theta\in\left(\frac3p\,,\frac12\right).\]  We first estimate the drift term.  Using Remark \ref{rem:bessel_kernel}, we have $R_{2-\theta}\in L_p(\bR)$ and hence
\[
\begin{aligned}
    \|\chi'(r)u(r)\|_{H_p^{\theta-2}}=|\chi'(r)|\,\|R_{2-\theta}*u(r)\|_{L_p} \le
    |\chi'(r)|\,\|R_{2-\theta}\|_{L_p}\int_{\bR}u(r,x)\,\ud x     \le N|\chi'(r)|\,W_{q_*}(u(r)).
\end{aligned}
\]
Since $\chi'=0$ on $[0,s/2]$, we have 
\begin{equation}
\label{eq:drift_bound}
    \|\chi'u\|_{\mathbb H_p^{\theta-2}(\tau_{R, L})}^p \le N_R .
\end{equation}
Next we estimate the noise term. 
By Lemma \ref{lem:white_noise_estimate},
\[
    \|\chi\sigma(u)\mathbf e\|_
    {\mathbb H_p^{\theta-1}(\tau_{R, L};\ell_2)}^p
    \le N\bE\int_0^{\tau_{R, L}} \|\chi(r)\sigma(u(r))\|_{L_p}^p\,\ud r.
\]
Assumption \ref{asp:sigma} implies  $|\sigma(y)|\le N(y^\gamma+y)$ for $y\ge0$.  Since $0\le\chi\le1$ and $v=\chi u$,
\[
    |\chi\sigma(u)|^p
    \le
    N v^{p\gamma}+Nv^p.
\]
We control the term involving $v^{p\gamma}$ according to the two alternatives in the statement. If $q_*=0$, then alternative {\rm (a)} gives $p\gamma\ge1$. Since
$\gamma<1$, we also have $p\gamma\le p$, and therefore by Young's inequality,
\[
    y^{p\gamma}\le N(y+y^p),
    \qquad y\ge0.
\]
Thus, on $[0,\tau_{R, L}]$,
\[
    \int_{\bR}v(r,x)^{p\gamma}\,\ud x
    \le N\int_{\bR}v(r,x)\,\ud x +
    N\|v(r)\|_{L_p}^p  
    \le N_R+N\|v(r)\|_{L_p}^p .
\]
If $q_*>0$, there are two subcases. If $p\gamma<1$, then alternative {\rm (b)} gives
\[
    p\gamma>\frac1{q_*+1}.
\]
Thus, using the inequality \eqref{eq:Holder_epsilon}, we have that on $[0,\tau_R]$,
\[
    \int_{\bR} v(r, x)^{p\gamma}\,\ud x
    \le W_{q_*}(v)^{p\gamma}
    \left(\int_{\bR} \Phi_{q_*}(x)^{-p\gamma/(1-p\gamma)}\,\ud x \right)^{1-p\gamma}  \le N_R.
\]
If $q_*>0$ and $p\gamma\ge1$, then the same inequality
$y^{p\gamma}\le N(y+y^p)$ gives
\[
    \int_{\bR}v(r,x)^{p\gamma}\,\ud x
    \le N_R+N\|v(r)\|_{L_p}^p.
\]
Combining all cases, we obtain
\begin{equation}
\label{eq:noise_bound1}
    \|\chi\sigma(u)\mathbf e\|_{\mathbb H_p^{\theta-1}(\tau_{R, L};\ell_2)}^p
    \le N_R +  N\|v\|_{\mathbb L_p(\tau_{R, L})}^p .
\end{equation}
At this point the right-hand side is finite by the definition of $\tau_{R,L}$,
so that Theorem \ref{thm:krylov_lp_estimate} is applicable. Before applying it
we remove the $\mathbb L_p$-term by interpolation. By Lemma \ref{lem:prop_of_bessel_space}\eqref{multi_ineq}, for every $\varepsilon>0$,
\[
    \|v(r)\|_{L_p}^p
    \le \varepsilon\|v(r)\|_{H_p^\theta}^p +
    N_\varepsilon\|v(r)\|_{H_p^{-2}}^p .
\]
Using Remark \ref{rem:bessel_kernel}, we have that for $r\le\tau_R$,
\[
    \|v(r)\|_{H_p^{-2}}\le \|R_2*v(r)\|_{L_p} \le \|R_2\|_{L_p}\int_{\bR}v(r,x)\,\ud x 
    \le N W_{q_*}(u(r))
    \le N_R.
\]
 Therefore
\begin{equation}\label{eq:v_Lp_bound}
    \|v\|_{\mathbb L_p(\tau_{R,L})}^p
    \le \varepsilon \|v\|_{\mathbb H_p^\theta(\tau_{R,L})}^p + N_R .
\end{equation}
Substituting this into
\eqref{eq:noise_bound1}, we obtain
\begin{equation}
\label{eq:noise_bound2}
    \|\chi\sigma(u)\mathbf e\|_
    {\mathbb H_p^{\theta-1}(\tau_{R,L};\ell_2)}^p
    \le N_R + N\varepsilon \|v\|_{\mathbb H_p^\theta(\tau_{R,L})}^p.
\end{equation}

Now we apply  Theorem \ref{thm:krylov_lp_estimate} to
\eqref{eq:cutoff_equation} stopped at $\tau_{R,L}$. Using
\eqref{eq:drift_bound} and
\eqref{eq:noise_bound2}, we get
\[
    \|v\|_{\mathcal H_p^\theta(\tau_{R,L})}^p
    \le N_R + N\varepsilon \|v\|_{\mathbb H_p^\theta(\tau_{R,L})}^p.
\]
Since $\|v\|_{\mathbb H_p^\theta(\tau_{R,L})} \le\|v\|_{\mathcal H_p^\theta(\tau_{R,L})}$ by Definition \ref{def of cH},   we choose $\varepsilon>0$ sufficiently small  to obtain 
\begin{equation}\label{eq:Htheta_bound1}
    \|v\|_{\mathcal H_p^\theta(\tau_{R,L})}^p
    \le N_R,
\end{equation}
where $N_R$ is independent of $L$.

This estimate allows us to remove the auxiliary localization. Indeed, \eqref{eq:v_Lp_bound} and \eqref{eq:Htheta_bound1} imply
\[
    \sup_{L\ge1} \bE\int_0^{\tau_{R,L}}\|v(r)\|_{L_p}^p\,\ud r
    \le N_R.
\]
Hence, 
\[
    \bP(\tau_{R,L}<\tau_R)
    \le \bP\left(\int_0^{\tau_{R,L}}\|v(r)\|_{L_p}^p\,\ud r\ge L\right)
    \le \frac{N_R}{L} \longrightarrow0 \quad \text{as $L\to \infty$.}
\]
Since $\tau_{R,L}$ is increasing in $L$, it follows that $\tau_{R,L}\uparrow\tau_R$ a.s.

By Fatou's lemma, \eqref{eq:Htheta_bound1} gives
\[
    \|v\|_{\mathcal H_p^\theta(\tau_R)}^p
    \le N_R.
\]

Now choose
\[
    \frac1p<\ba<\bb<\frac12\left(\theta-\frac1p\right),
\]
and set
\[
    \beta:=\theta-2\bb.
\]
Then $\beta>1/p$. By Theorem \ref{thm:embedding_theorems}\eqref{embedding in time}, we have 
\[
    \bE
    \left[\sup_{0\le r\le\tau_R} \|v(r)\|_{H_p^\beta}^p\right]
    \le N_R,
\]
which implies, since $v=u$ on $[s,T]$, 
\[
\bE\Bigl[
\sup_{s\le r\le\tau_R}\|u(r,\cdot)\|_{H_p^\beta}^p
\Bigr]
\le N_R<\infty.
\]
Finally, $\tau_R\uparrow T$ a.s. as
$R\to\infty$ by \eqref{eq:weight_bound}, so $\mathbf 1_{\{\tau_R>t_0\}}\uparrow1$
a.s. and therefore
\[
    u(t_0,\cdot)\in H_p^\beta(\bR)\qquad\text{a.s.},
\]
which finishes the proof.
\end{proof}      

\subsection{Restarting and completion of the proof of Theorem \ref{thm:CSP_measure_initial}}
We now complete the proof of Theorem \ref{thm:CSP_measure_initial} by restarting the equation at a fixed positive time. This restart is made possible by Proposition \ref{prop:positive_time_Sobolev}, which shows that the solution has the Sobolev regularity required to apply Theorem \ref{thm:CSP_regular_initial}.
 
\begin{proof}[Proof of Theorem \ref{thm:CSP_measure_initial}]
Let $u$ be an arbitrary nonnegative solution of \eqref{eq:SPDE} with
initial measure $\mu_0$. We first choose the weighted moment that will be
used in the restart argument. If $\lambda=1$, set
\[
    q_*:=0 \qquad\text{and}\qquad
    \bE \mu(\bR) <\infty .
\]
If $\lambda\in(1,2)$, choose $q_*$ such that
\[
    q_*>\frac{\lambda-1}{2-\lambda}
    \qquad\text{and}\qquad
    \bE W_{q_*}(\mu_0)<\infty .
\]
Such a $q_*$ exists by the assumptions of the theorem.

We first verify \eqref{eq:weight_bound} to apply Proposition \ref{prop:positive_time_Sobolev}. By Proposition \ref{prop:existence_measure_initial} (see \eqref{eq:uniform_weighted_u}), the solution constructed
from $\mu_0$ satisfies \eqref{eq:weight_bound}. Since uniqueness in law
holds for the initial law $\mathcal L(\mu_0)$, the same path-law property
holds for the present solution $u$. Hence
\[
    \sup_{r\in[\delta,T]}W_{q_*}(u(r))<\infty
    \qquad
    \text{a.s. for every }\delta\in(0,T).
\]

Fix $t_0\in(0,T)$. Choose $p>6$ as follows. If $q_*=0$, choose $p$ so large
that
\[
    p\gamma\ge1.
\]
If $q_*>0$, choose $p$ so large that
\[
    p\gamma>\frac1{q_*+1}.
\]
By Proposition \ref{prop:positive_time_Sobolev}, there exists
$\beta>1/p$ such that
\[
    u(t_0,\cdot)\in H_p^\beta(\bR)
    \qquad\text{a.s.}
\]
Setting $\eta_0:=\beta+2/p$, we have
$u(t_0,\cdot)\in H_p^{\eta_0-2/p}(\bR)$ a.s.\ and $p>3/\eta_0$, since
$\beta>1/p$.

We now check the remaining hypotheses of
Theorem \ref{thm:CSP_regular_initial} after conditioning. For $R\ge1$, define
\[
    \fA_R:=\left\{ \|u(t_0,\cdot)\|_{H_p^\beta} +W_{q_*}(u(t_0))\le R\right\}.
\]
Then, $\fA_R\in\mathcal F_{t_0}$ and $\fA_R\uparrow\Omega$ a.s.
If $\bP(\fA_R)=0$, there is nothing to prove for this $R$. Otherwise,
we work under the conditional probability measure
$\bP_R:=\bP(\,\cdot\,|\fA_R)$.
Under $\bP_R$, the shifted initial function $Z_0:=u(t_0,\cdot)$ satisfies
$Z_0\in L_p(\Omega;H_p^{\eta_0-2/p}(\bR))$.
Indeed,
\[
    \bE_R\|Z_0\|_{H_p^{\eta_0-2/p}}^p =
    \bE_R\|u(t_0,\cdot)\|_{H_p^\beta}^p
    \le R^p,
\] where $\E_R$ denotes the corresponding expectation with respect to $\P_R$.
Moreover, the moment condition in Theorem \ref{thm:CSP_regular_initial} is also satisfied under $\bP_R$. That is, if $q_*=0$, then $\bE_R W_0(Z_0) \le R$, and our choice of $p$ gives $p\gamma\ge1$. If $q_*>0$, then $\bE_R W_{q_*}(Z_0)\le R$, and our choice of $p$ gives $p\gamma>\frac1{q_*+1}$.  In the case $\lambda\in(1,2)$, the chosen $q_*$ also satisfies $q_*>\frac{\lambda-1}{2-\lambda}$.
Thus the regularity and moment assumptions of
Theorem \ref{thm:CSP_regular_initial} are satisfied for the shifted initial
law under $\bP_R$.

We now define the future noise
\[
    \xi^{(t_0)}(\ud s,\ud x):=\xi(t_0+\ud s,\ud x),
\]
which is independent of $\mathcal F_{t_0}$, and hence independent of $Z_0$ under
the conditional law $\bP_R$. Consider the shifted equation on
$[0,T-t_0]$:
\[
    \partial_s z(s,x)
    = L^{(t_0)}z(s,x)
    + \sigma(z(s,x))\xi^{(t_0)}(s,x), \qquad
    z(0,\cdot)=Z_0,
\]
where
\[
    L^{(t_0)}
    = a(t_0+s,x)\partial_x^2
    + b(t_0+s,x)\partial_x
    + c(t_0+s,x).
\]
The shifted coefficients satisfy the same coefficient assumptions as the original coefficients. By the shifted uniqueness-in-law assumption (Assumption~\ref{ass:shifted_uniqueness}), the uniqueness-in-law hypothesis required in Theorem \ref{thm:CSP_regular_initial} is available for this shifted equation.

For $m,R\ge 1$ and $a,b \ge 0$ with $a<b$, define a path event
\[
\fC_{m,R}(a,b) : = \left\{ f\, : \, f(t,x) = 0 \text{ for all $t\in\left[a+ \frac{(b-a)}{2m}, b\right]$ and all $x\in\R$ with $|x|\ge R$} \right\},
\] and 
\[
\fC(a,b) := \bigcap_{m=1}^\infty \bigcup_{R=1}^\infty \fC_{m,R}(a,b).
\] Note that $f\in \fC(a,b)$ implies $f(t)$ has compact support for every $t\in (a,b]$. 
The shifted process
\[
    u^{(t_0)}(s,x):=u(t_0+s,x),
    \qquad 0\le s\le T-t_0,
\]
is a nonnegative solution of the shifted equation with initial function $Z_0$. Therefore, Theorem \ref{thm:CSP_regular_initial}, applied under $\bP_R$ gives
\[
 \P_R\left( u^{(t_0)} \in \fC(0,T-t_0) \right)=1.
\]
Thus, we have 
\[
    \bP\left(
        \fA_R\cap
        \left\{u \in \fC(t_0,T)^c\right\}\right)=0.
\]
Letting $R\to\infty$ and using $\fA_R\uparrow\Omega$, we obtain $\P(u\in\fC(t_0,T))=1$ for every $t_0\in(0,T)$. Taking the intersection over $t_0=T/2m$, $m\ge1$, we conclude that $u(t)$ has compact support for every $t\in(0,T]$ almost surely, which completes the proof.
\end{proof}

\bibliographystyle{alpha}      
\bibliography{refs}

% \begin{thebibliography}{1}
% \bibitem{Krylov97}
% N.\,V.~Krylov, \emph{On a result of C. Mueller and E. Perkins}, Probab.\ Theory Relat.\ Fields \textbf{108} (1997), 543--557.
% \end{thebibliography}

\end{document}